\newtheorem{theorem}{Theorem}[]
\newtheorem{proposition}[theorem]{Proposition}
\newtheorem{lemma}[theorem]{Lemma}
\newtheorem{corollary}[theorem]{Corollary}
\theoremstyle{definition}
\newtheorem{remark}{Remark}[section]
\numberwithin{equation}{section}
\begin{document}
\title[Critical Values Random Spherical Harmonics]{A Reduction Principle for
the Critical Values of Random Spherical Harmonics}
\author{Valentina Cammarota}
\address{Department of Statistics, Sapienza University of Rome}
\author{Domenico Marinucci}
\address{Department of Mathematics, University of Rome Tor Vergata}
\maketitle

\begin{abstract}
We study here the random fluctuations in the number of critical points with values in an interval $%
I\subset \mathbb{R}$ for Gaussian spherical eigenfunctions $\left\{
f_{\ell }\right\} $, in the high energy regime
where $\ell \rightarrow \infty $. We show that these fluctuations are asymptotically equivalent to the centred $L^{2}$-norm of $\left\{ f_{\ell }\right\} $ times the integral of a (simple and fully
explicit) function over the interval under consideration. We discuss also
the relationships between these results and the asymptotic behaviour of other geometric functionals on
the excursion sets of random spherical harmonics.
\end{abstract}

\date{\today }

\begin{itemize}
\item \textbf{AMS Classification}: 60G60, 62M15, 53C65, 42C10, 33C55.

\item \textbf{Keywords and Phrases}: {Reduction Principle, Critical Points,
Wiener-Chaos Expansion, Spherical Harmonics, Quantitative Central Limit
Theorem, Berry's Cancellation Phenomenon}
\end{itemize}

\section{Introduction and Main Result}

\subsection{The asymptotic geometry of random spherical harmonics}

It is well-known that the eigenvalues $\lambda $ of the Laplace equation $%
\Delta _{\mathbb{S}^{2}}f+\lambda f=0$ on the two-dimensional sphere $%
\mathbb{S}^{2}$, are of the form $\lambda =\lambda _{\ell }=\ell (\ell +1)$
for some integer $\ell \geq 1$. For any given eigenvalue $\lambda _{\ell }$,
the corresponding eigenspace is the $(2\ell +1)$-dimensional space of
spherical harmonics of degree $\ell $; we can choose an arbitrary $L^{2}$%
-orthonormal basis $\left\{ Y_{\mathbb{\ell }m}(.)\right\} _{m=-\ell ,\dots
,\ell }$, and consider random eigenfunctions of the form
\begin{equation*}
f_{\ell }(x)=\frac{\sqrt{4\pi }}{\sqrt{2\ell +1}}\sum_{m=-\ell }^{\ell
}a_{\ell m}Y_{\ell m}(x),
\end{equation*}%
where the coefficients $\left\{ a_{\mathbb{\ell }m}\right\} $ are
complex-valued Gaussian variables, such that for $m\neq 0$, $\text{Re}%
(a_{\ell m})$, $\text{Im}(a_{\ell m})$ are zero-mean, independent Gaussian
variables with variance $\frac{1}{2}$, while $a_{\ell 0}$ follows a standard
Gaussian distribution. The random fields $\{f_{\ell }(x),\;x\in \mathbb{S}%
^{2}\}$ are isotropic, meaning that the probability laws of $f_{\ell }(\cdot
)$ and $f_{\ell }^{g}(\cdot ):=f_{\ell }(g\cdot )$ are the same for any
rotation $g\in SO(3)$. Also, $f_{\ell }$ are centred Gaussian, and from the
addition theorem for spherical harmonics (see i.e., \cite{MaPeCUP}, eq.
(3.42)) the covariance function is given by,
\begin{equation}
\mathbb{E}[f_{\ell }(x)f_{\ell }(y)]=P_{\ell }(\cos \theta _{xy})\text{ },%
\hspace{0.5cm}\theta _{xy}:=d_{\mathbb{S}^{2}}(x,y)\text{ },  \label{cov}
\end{equation}%
where $P_{\ell }$ are the usual Legendre polynomials, defined by%
\begin{equation*}
P_{\ell }(t)=\frac{1}{2^{\ell }\ell !}\frac{d^{\ell }}{dt^{\ell }}%
(t^{2}-1)^{\ell }\text{ },\hspace{0.5cm}t\in \lbrack -1,1],\hspace{0.5cm}%
\ell \in \mathbb{N}\text{ },
\end{equation*}%
whereas $d_{\mathbb{S}^{2}}(x,y)$ is the usual spherical geodesic distance
between $x$ and $y,$ i.e.,%
\begin{equation*}
d_{\mathbb{S}^{2}}(x,y)=\arccos (\left\langle x,y\right\rangle ),\text{
where }\left\langle x,y\right\rangle =\cos \theta _{x}\cos \theta _{y}+\sin
\theta _{x}\sin \theta _{y}\cos (\varphi _{x}-\varphi _{y})\text{ },
\end{equation*}%
and $(\theta _{x},\varphi _{x})$, $(\theta _{y},\varphi _{y})$ are the
spherical coordinates of $x$ and $y$ respectively, $\theta _{a}\in \lbrack
0,\pi ]$, $\varphi _{a}\in \lbrack 0,2\pi ),$ $a=x,y.$

A number of issues on the geometry of random spherical harmonics has been
scrutinized recently, including the number of nodal domains \cite{nazarov},
the length of nodal lines \cite{Wig}, \cite{MRW}, the excursion area and the
defect \cite{MW}, \cite{MR2015}, critical values and the Euler-Poincar\'{e}
characteristic \cite{CMW}, \cite{CMW-EPC}, \cite{CW}, \cite{CM2018}, mass
equidistribution \cite{han}, critical radius \cite{feng}; these and/or other
geometric features for random eigenfunctions on other compact manifolds such
as the torus (arithmetic random waves) or on the plane (Berry's random waves
model, \cite{Berry 1977}) have also been intensively studied, see i.e. \cite%
{BMW}, \cite{dalmao}, \cite{KKW}, \cite{MPRW2015}, \cite{npr}, \cite%
{peccatirossi} for the fluctuations of nodal lengths, \cite{Buckley} for the
number of nodal domains, \cite{granville} for the analysis of mass
equidistributions and \cite{Rudnick}, \cite{RudnickYesha} for nodal
intersections, to list some of the most recent contributions; a review is
given in \cite{Rossi2018}.

In particular, consider the excursion sets $A_{u}(f_{\ell };\mathbb{S}^{2}),$
defined for $u\in \mathbb{R}$ by
\begin{equation*}
A_{u}(f_{\ell };\mathbb{S}^{2}):=\left\{ x\in \mathbb{S}^{2}:f_{\ell
}(x)\geq u\right\} \text{ };
\end{equation*}%
their geometry can be characterized by the behaviour of the
Lipschitz-Killing curvatures $\mathcal{L}_{k},$ $k=0,1,2,$ (see i.e., \cite%
{adlertaylor},\cite{adlerstflour}), which in dimension two correspond
respectively to the Euler-Poincar\'{e} characteristic $\mathcal{L}_{0},$
(half) the boundary length $\mathcal{L}_{1}$, and the excursion area $%
\mathcal{L}_{2};$ the asymptotic behaviour (in the high-energy regime $\ell
\rightarrow \infty $) of these three quantities is now fully understood, and
indeed we have (see \cite{CW})%
\begin{equation}
\mathcal{L}_{0}(A_{u}(f_{\ell };\mathbb{S}^{2}))-\mathbb{E}\left[ \mathcal{L}%
_{0}(A_{u}(f_{\ell };\mathbb{S}^{2}))\right] =\frac{1}{2}\left\{ \frac{%
\lambda _{\ell }}{2}\right\} \left[ H_{2}(u)H_{1}(u)\phi (u)\right] \frac{1}{%
2\pi }\int_{\mathbb{S}^{2}}H_{2}(f_{\ell }(x))dx+R_{0}(\ell )\text{ },
\label{aop1}
\end{equation}%
\begin{equation}
\mathcal{L}_{1}(A_{u}(f_{\ell };\mathbb{S}^{2}))-\mathbb{E}\left[ \mathcal{L}%
_{1}(A_{u}(f_{\ell };\mathbb{S}^{2}))\right] =\frac{1}{2}\left\{ \frac{%
\lambda _{\ell }}{2}\right\} ^{1/2}\sqrt{\frac{\pi }{8}}\left[
H_{1}^{2}(u)\phi (u)\right] \int_{\mathbb{S}^{2}}H_{2}(f_{\ell
}(x))dx+R_{1}(\ell )\text{ },  \label{aop2}
\end{equation}%
\begin{equation}
\mathcal{L}_{2}(A_{u}(f_{\ell };\mathbb{S}^{2}))-\mathbb{E}\left[ \mathcal{L}%
_{2}(A_{u}(f_{\ell };\mathbb{S}^{2}))\right] =\frac{1}{2}\left\{ \frac{%
\lambda _{\ell }}{2}\right\} ^{0}\left[ H_{0}(u)H_{1}(u)\phi (u)\right]
\int_{\mathbb{S}^{2}}H_{2}(f_{\ell }(x))dx+R_{2}(\ell )\text{ },
\label{aop3}
\end{equation}%
where $\phi (u)={1}/\sqrt{2\pi }\exp \{-{u^{2}}/{2}\}$ denotes a standard
Gaussian density, $H_{k}(.)$ denotes Hermite polynomials $%
H_{k}(u)=(-1)^{k}\phi (u)\frac{d^{k}\phi (u)}{du^{k}}$, and, for $k=0,1,2$, $%
R_{k}(\ell )$ are asymptotically negligible, meaning that
\begin{equation*}
\lim_{\ell \rightarrow \infty }\frac{\mathbb{E}[R_{k}^{2}(\ell )]}{\text{Var}%
\left( \mathcal{L}_{k}(A_{u}(f_{\ell };\mathbb{S}^{2}))\right) }=0\text{ }.
\end{equation*}%
The results given in \eqref{aop1}-\eqref{aop3} can be viewed as broadly
analogues to the \emph{reduction principles} established by \cite{DehTaq}
for the empirical processes of long-range dependent, stationary stochastic
processes on $\mathbb{Z}$: indeed these authors had shown that the empirical
process of long range dependence sequences is asymptotically fully
degenerate, being equivalent to a single random variable belonging to a
Wiener chaos rescaled by a deterministic function depending only on the
threshold value $u$. Likewise, the results reported in \eqref{aop1}-%
\eqref{aop3} entail some unexpected characterizations on the asymptotic
behaviour of these geometric functionals, namely

\begin{enumerate}
\item Any pair of them is asymptotically perfectly correlated at any
non-zero threshold level, i.e.
\begin{equation*}
\lim_{\ell \rightarrow \infty }\text{Corr}\left\{ \mathcal{L}%
_{j}(A_{u_{1}}(f_{\ell };\mathbb{S}^{2})),\mathcal{L}_{k}(A_{u_{2}}(f_{\ell
};\mathbb{S}^{2}))\right\} =1,\text{for all }j,k=0,1,2,u_{1},u_{2}\in
\mathbb{R},u_{1},u_{2}\neq 0\text{ };
\end{equation*}%
in words, knowledge of the value of any of the three functionals allows to
compute with asymptotically perfect precision the value of the other two
functionals at any (non-zero) level

\item It is possible to establish rather easily quantitative central limit
theorems for each of them, by using standard tools; note indeed that, by
Parseval's equality%
\begin{equation}
\int_{\mathbb{S}^{2}}H_{2}(f_{\ell }(x))dx=\frac{4\pi }{2\ell +1}%
\sum_{m=-\ell }^{\ell }\left\{ |a_{\ell m}|^{2}-\mathbb{E}|a_{\ell
m}|^{2}\right\} \text{ },  \label{H2}
\end{equation}%
i.e., each of the Lipschitz-Killing curvatures is a deterministic rescaling
of a sum of centred, i.i.d. variables

\item A phase transition occurs for $u=0,$ where the leading terms disappear
for all three functionals.
\end{enumerate}

\subsection{Statement of the main result}

Our purpose in this paper is to provide a similar characterization for the
behaviour of Gaussian spherical harmonics in terms of critical points. More
precisely, let $I\subseteq \mathbb{R}$ be any interval in the real line; we
are interested in the number of critical points with value in $I$:
\begin{equation*}
\mathcal{N}_{\ell }^{c}(I)=\#\{x\in \mathbb{S}^{2}:f_{\ell }(x)\in I,\nabla
f_{\ell }(x)=0\}\text{ }.
\end{equation*}%
It has been shown in \cite{CMW} that for every interval $I\subseteq \mathbb{R%
}$, as $\ell \rightarrow \infty $
\begin{equation*}
\mathbb{E}[\mathcal{N}_{\ell }^{c}(I)]=\frac{2}{\sqrt{3}}\ell (\ell
+1)\int_{I}\pi ^{c}(t)dt+O(1)\text{ },
\end{equation*}%
where we have introduced the density function.
\begin{equation*}
\pi ^{c}(t)=\frac{\sqrt{3}}{\sqrt{8\pi }}(2e^{-t^{2}}+t^{2}-1)e^{-\frac{t^{2}%
}{2}}\text{ }.
\end{equation*}%
The constant in the $O(\cdot )$ term is universal; here (and later) by
universality of the constant we mean that the integral of the error term on
any interval $I$ is uniformly bounded by its value when $I=\mathbb{R}$,
which is O(1). Also, we have
\begin{equation*}
\mathbb{E}[\mathcal{N}_{\ell }^{c}]:=\mathbb{E}[\mathcal{N}_{\ell }^{c}(%
\mathbb{R})]=\frac{2}{\sqrt{3}}\ell (\ell +1)+O(1)\text{ }.
\end{equation*}%
The investigation of the asymptotic variance of critical values is more
challenging; we need first to establish some more notation, i.e. we shall
define (see also \cite{CW})
\begin{equation}
p_{3}^{c}(t)=\frac{1}{\sqrt{8\pi }}e^{-\frac{3}{2}t^{2}}\left[
2-6t^{2}-e^{t^{2}}(1-4t^{2}+t^{4})\right] \text{ },\hspace{0.5cm}\nu ^{c}(I)=%
\left[ \int_{I}p_{3}^{c}(t)dt\right] ^{2}\text{ }.  \label{p3}
\end{equation}
The following result was given in \cite{CMW}: for every interval $I\subseteq
\mathbb{R}$ as $\ell \rightarrow \infty $
\begin{equation*}
{\text{Var}}(\mathcal{N}_{\ell }^{c}(I))=\ell ^{3}\nu ^{c}(I)+O(\ell ^{5/2})%
\text{ },
\end{equation*}%
again with a universal error bound in the $O(\cdot )$ term; similar results
hold for the number of extrema and saddles.

\begin{remark}
It can be noted that for some intervals $I$ the leading constant $\nu
^{c}(I) $ vanishes, and, accordingly, the order of magnitude of the variance
is smaller than $\ell ^{3}$; the most important among these cases is for $I=%
\mathbb{R}$ (corresponding to the total number of critical points), where we
have \cite{CW}, as $\ell \rightarrow \infty $
\begin{equation*}
\mathrm{Var}({\mathcal{N}}_{\ell }^{c})=\frac{1}{3^{3}\pi ^{2}}\ell ^{2}\log
\ell +O(\ell ^{2})\text{ }.
\end{equation*}%
More generally, for the intervals such that the constant $\nu ^{c}(I)$
vanishes, the variance of the number of critical points in $I$ has the
following asymptotic behaviour: as $\ell \rightarrow \infty $
\begin{equation}
\mathrm{Var}({\mathcal{N}}_{\ell }^{c}(I))=[\mu ^{c}(I)]^{2}\ell ^{2}\log
\ell +O(\ell ^{2})\text{ },  \label{fourthchaos}
\end{equation}%
where $\mu ^{c}(I)=\int_{I}\mu ^{c}(t)dt$, and the function $\mu ^{c}(t)$ is
given by
\begin{equation*}
\mu ^{c}(t)=\frac{1}{2^{3}\pi }\sqrt{\frac{2}{\pi }}%
[(-2-36t^{2}+38t^{4})e^{-t^{2}}+1+17t^{2}-11t^{4}+t^{6}]e^{-\frac{t^{2}}{2}}%
\text{ }.
\end{equation*}
\end{remark}

Our main result in this paper is to establish a reduction principle for the
number of critical points in the interval $I,$ of similar nature as those
given earlier in \eqref{aop1}-\eqref{aop3}. In particular, following the
same approach as given for other geometric functionals in many recent papers
(see i.e., \cite{MPRW2015}, \cite{CM2018}, \cite{npr} and the references
therein) we shall start by computing the $L^{2}(\Omega )$ expansion of
critical points into \emph{Wiener Chaoses} (the orthogonal spaces spanned by
Hermite polynomials, see \cite{noupebook} for details), which will lead to
lead to the $L^{2}(\Omega )$ expansion%
\begin{equation}
\mathcal{N}_{\ell }^{c}(I)=\sum_{q=0}^{\infty }\mathcal{N}_{\ell ;I}^{c}[q]%
\text{ ;}  \label{L2exp}
\end{equation}%
indeed the rigorous justification of (\ref{L2exp}) and is one of the main
technical challenges of this paper. The other main step is then to show that
a single term dominates the $L^{2}(\Omega )$ expansion (after centering),
then leading to the following

\begin{theorem}
\label{th1} As $\ell \rightarrow \infty $, we have that
\begin{align}
\mathcal{N}_{\ell }^{c}(I)-\mathbb{E}[\mathcal{N}_{\ell }^{c}(I)]& =\frac{%
\ell }{2}\,\left[ \int_{I}p_{3}^{c}(t)dt\right] \mathcal{N}_{\ell
;I}^{c}[2]+R_{\ell }(I)  \notag \\
& =\frac{\lambda _{\ell }}{2}\,\left[ \int_{I}p_{3}^{c}(t)dt\right] \frac{1}{%
2\pi }\int_{\mathbb{S}^{2}}H_{2}(f_{\ell }(x))dx+R_{\ell }(I) \\
& =\frac{\ell +1}{2}\,\left[ \int_{I}p_{3}^{c}(t)dt\right] \frac{2\ell }{%
2\ell +1}\;\sum_{m=-\ell }^{\ell }\{|a_{\ell m}|^{2}-1\}+R_{\ell }(I)\text{ }%
,  \label{theo}
\end{align}%
where
\begin{equation*}
\mathrm{Var}(\mathcal{N}_{\ell ;I}^{c}[2])=\ell ^{3}\nu ^{c}(I)+o(\ell ^{3})%
\text{ },\text{ }\mathbb{E}[R_{\ell }^{2}(I)]=o(\ell ^{3})\text{ },
\end{equation*}%
uniformly over $I$.
\end{theorem}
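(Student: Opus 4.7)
The plan is to follow the Wiener--chaos strategy developed for analogous geometric functionals in \cite{MPRW2015,CM2018,npr}. The first step is to represent $\mathcal{N}_\ell^c(I)$ via a regularized Kac--Rice formula: letting $\delta_\varepsilon$ be a smooth approximation of the Dirac mass at $0\in\mathbb{R}^2$, I set
\[
\mathcal{N}_{\ell,\varepsilon}^c(I):=\int_{\mathbb{S}^2}\delta_\varepsilon(\nabla f_\ell(x))\,|\det\nabla^2 f_\ell(x)|\,\mathbf{1}_I(f_\ell(x))\,dx\,,
\]
and verify that $\mathcal{N}_{\ell,\varepsilon}^c(I)\to\mathcal{N}_\ell^c(I)$ in $L^2(\Omega)$ as $\varepsilon\to 0$. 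This requires a uniform-in-$\varepsilon$ pointwise second--moment bound on the integrand, obtained from the non-degeneracy of the joint Gaussian law of $(f_\ell(x),\nabla f_\ell(x),\nabla^2 f_\ell(x),f_\ell(y),\nabla f_\ell(y),\nabla^2 f_\ell(y))$ off the diagonal, together with classical Kac--Rice estimates near the diagonal.

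\textbf{Hermite expansion in each chaos.} For each $\varepsilon>0$ the integrand is a Borel function of the six-dimensional standardized Gaussian vector made of $f_\ell(x)$, its two gradient components in an orthonormal frame, and its three Hessian components; I will expand each of the three factors in a Hermite series in this vector and then regroup by total degree to obtain an explicit representation of $\mathcal{N}_{\ell,\varepsilon;I}^c[q]$ for every $q\ge0$. Passing to the limit $\varepsilon\to0$ in $L^2(\Omega)$, termwise on each chaos, will justify the expansion (\ref{L2exp}) and define the projections $\mathcal{N}_{\ell;I}^c[q]$.

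\textbf{Identification of the leading chaos.} After centering the $q=0$ term disappears. The $q=1$ projection vanishes by parity: the integrand is invariant under the joint sign-flip of $\nabla f_\ell$ and of the off-diagonal Hessian components, which kills any odd-order Hermite contribution involving only the derivative variables. The second chaos projection is a linear combination of spatial integrals of $H_2(f_\ell)$, of $(\partial_i f_\ell)(\partial_j f_\ell)-\delta_{ij}$, and of the Hermite rank-two parts of the Hessian. The spherical eigenfunction identity $\Delta_{\mathbb{S}^2}f_\ell=-\lambda_\ell f_\ell$, together with integration by parts and isotropy, will collapse all of these into deterministic multiples of the single random variable $\int_{\mathbb{S}^2}H_2(f_\ell(x))\,dx$; a direct Gaussian expectation computation, using the explicit form of $p_3^c$ from \eqref{p3}, will identify the scalar prefactor as $(\lambda_\ell/2)(2\pi)^{-1}\int_I p_3^c(t)\,dt$, yielding the three equivalent expressions in the statement via \eqref{H2}.

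\textbf{Main obstacle: controlling the remainder.} By chaos orthogonality, $\mathbb{E}[R_\ell^2(I)]=\sum_{q\ne 2}\mathrm{Var}(\mathcal{N}_{\ell;I}^c[q])$, and I must show this is $o(\ell^3)$ \emph{uniformly} in $I$. Each individual variance can be written as a double integral over $\mathbb{S}^2\times\mathbb{S}^2$ of an explicit polynomial functional of $P_\ell(\cos\theta_{xy})$ and its first two derivatives; Hilb's asymptotic approximation of $P_\ell$ by Bessel functions, as exploited in \cite{CMW,CW,CM2018}, will give $\mathrm{Var}(\mathcal{N}_{\ell;I}^c[q])=O(\ell^{3-\eta_q})$ for some $\eta_q>0$ when $q$ is odd, and at worst $O(\ell^2\log\ell)$ at $q=4$ in view of \eqref{fourthchaos}. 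The genuinely delicate point will be to sum these estimates over all $q\ge3$ uniformly in $I$, which requires a tail bound on the Hermite coefficients of the Kac--Rice integrand derived from the $L^2$ norms of the three factors; this is the step I expect to be the main technical obstacle. Combined with the second-chaos variance identity $\mathrm{Var}(\mathcal{N}_{\ell;I}^c[2])=\ell^3\nu^c(I)+o(\ell^3)$, which follows from the explicit form of the projection and Parseval's identity applied to \eqref{H2}, this will complete the proof.
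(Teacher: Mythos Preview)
Your outline is broadly on the right track (regularize, expand in Wiener chaoses, isolate the $q=2$ component), but there are two points where your plan diverges from the paper's argument, and one of them is the place you yourself flag as the main obstacle.

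\textbf{The remainder.} You propose to show $\sum_{q\geq 3}\mathrm{Var}(\mathcal{N}^c_{\ell;I}[q])=o(\ell^3)$ by bounding each chaos variance separately via Hilb's asymptotics and then summing over $q$. This is far harder than necessary, and the uniform-in-$q$ tail control you would need is not obviously available. The paper bypasses this entirely: once you have computed $\mathrm{Var}(\mathcal{N}^c_{\ell;I}[2])=\ell^3\nu^c(I)+o(\ell^3)$ explicitly, you compare it with the \emph{total} variance $\mathrm{Var}(\mathcal{N}^c_\ell(I))=\ell^3\nu^c(I)+O(\ell^{5/2})$, which is already established in \cite{CMW}. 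By chaos orthogonality, $\sum_{q\neq 2}\mathrm{Var}(\mathcal{N}^c_{\ell;I}[q])=\mathrm{Var}(\mathcal{N}^c_\ell(I))-\mathrm{Var}(\mathcal{N}^c_{\ell;I}[2])=o(\ell^3)$, uniformly in $I$. No chaos-by-chaos estimates are needed beyond $q=2$.

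\textbf{The second-chaos collapse.} Your sketch says that the eigenfunction identity plus integration by parts and isotropy ``collapse'' all rank-two pieces into multiples of $\int_{\mathbb{S}^2}H_2(f_\ell)$. This is the right endpoint but not the right mechanism. In the paper's computation (done after a Cholesky factorization of the $5\times 5$ covariance of $(\nabla f_\ell,\mathrm{vec}\,\nabla^2 f_\ell)$ into standard Gaussians $Y_1,\dots,Y_5$), the individual integrals $\int_{\mathbb{S}^2}H_2(Y_i)\,dx$ and $\int_{\mathbb{S}^2}Y_iY_j\,dx$ are \emph{not} all proportional to $\sum_m(|a_{\ell m}|^2-1)$: they carry $m$-dependent weights of the form $m/\ell$, $m^3/\ell^3$, etc. (cf.\ \cite{CM2018}). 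The reduction to $\sum_m(|a_{\ell m}|^2-1)$ comes from a genuine cancellation: after reorganizing $\mathcal{N}^c_{\ell;I}[2]$ as $\mathrm{const}\cdot\sum_m(|a_{\ell m}|^2-1)[M_0+(m/\ell)M_1+(m^3/3\ell^3)M_2]$, one shows by an explicit Gaussian computation (conditioning on the trace of the Hessian and exploiting a symmetry of the conditional law) that $M_1(I)=M_2(I)=0$ identically, while $M_0(I)=\tfrac{1}{8}\int_I p_3^c(t)\,dt$. The prefactor identification is thus not a soft argument; it requires evaluating projection coefficients of the form $\mathbb{E}[\,|\det(\cdot)|\,P(Y)\,\mathbf{1}_{\{\cdot\in I\}}\,]$ with an absolute value inside, which the paper handles by the change of variables described above.
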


Theorem \ref{th1} entails actually two different results, namely

a) the high frequency behaviour of the number of critical points is
dominated by a single term, proportional to the second-order Wiener chaos
projection $\mathcal{N}_{\ell ;I}^{c}[2]$, and

b) the second-order Wiener chaos projection $\mathcal{N}_{\ell ;I}^{c}[2]$
admits a simple expression in terms of the integral of $H_{2}(f_{\ell }(x))$
over $\mathbb{S}^{2}.$

These results share some surprising features with the asymptotic expressions
for Lipschitz-Killing curvatures reported in (\ref{aop1}-\ref{aop3}); in
particular, while the computation of critical points by means of the
Kac-Rice formula (see below) requires the evaluation of gradient and Hessian
fields, the dominating term depends in the high-frequency regime only on the
(random) $L^{2}(\mathbb{S}^{2})$ norm of the eigenfunctions $f_{\ell }:$%
\begin{equation}
\frac{4\pi }{2\ell +1}\sum_{m=-\ell }^{\ell }\{|a_{\ell m}|^{2}-1\}=\int_{%
\mathbb{S}^{2}}H_{2}(f_{\ell }(x))dx=\int_{\mathbb{S}^{2}}f_{\ell
}^{2}(x)dx-4\pi =\left\Vert f_{\ell }\right\Vert _{L^{2}(\mathbb{S}%
^{2})}^{2}-\mathbb{E}\left\Vert f_{\ell }\right\Vert _{L^{2}(\mathbb{S}%
^{2})}^{2}\text{ .}  \label{randomnorm}
\end{equation}%
As a simple Corollary we are able to establish a quantitative Central Limit
Theorem; for this purpose, let us recall the definition of the Wasserstein
distance between probability distributions (see for instance \cite{noupebook}
and the references therein), which for any two random variables $X,Y$ is
given by%
\begin{equation*}
d_{W}(X,Y):=\sup_{h\in \text{Lip}(1)}\left\vert \mathbb{E}h(X)-\mathbb{E}%
h(Y)\right\vert ,
\end{equation*}%
where $\text{Lip}(1)$ denotes the space of Lipschitz functions with bounding
constant equal to $1$. Writing $Z\sim $ $\mathcal{N}(0,1)$ for a standard
Gaussian variable, we have that:

\begin{corollary}
\label{cor1}For $I\subset \mathbb{R}$ s.t. $\nu ^{c}(I)\neq 0,$ as $\ell
\rightarrow \infty $,
\begin{equation*}
d_{W}\left( \frac{\mathcal{N}_{\ell }^{c}(I)-\mathbb{E}[\mathcal{N}_{\ell
}^{c}(I)]}{\sqrt{\mathrm{Var}(\mathcal{N}_{\ell }^{c}(I))}},Z\right)
=o\left( 1\right) \text{ }.
\end{equation*}
\end{corollary}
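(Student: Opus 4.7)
The plan is to deduce this quantitative CLT directly from the reduction principle established in Theorem \ref{th1}: once the centred count is shown to be, up to an $L^{2}$-negligible remainder, a fixed deterministic multiple of a sum of independent squared-Gaussian variables, the CLT follows by a standard Berry-Esseen argument. Concretely, I would set
$$M_{\ell }:=\frac{\ell +1}{2}\left[ \int_{I}p_{3}^{c}(t)\,dt\right] \frac{2\ell }{2\ell +1}\sum_{m=-\ell }^{\ell }\{|a_{\ell m}|^{2}-1\}$$
and $V_{\ell }:=\sqrt{\mathrm{Var}(\mathcal{N}_{\ell }^{c}(I))}$; under the hypothesis $\nu ^{c}(I)\neq 0$ one has $V_{\ell }^{2}=\ell ^{3}\nu ^{c}(I)(1+o(1))$, while Theorem \ref{th1} gives $\mathcal{N}_{\ell }^{c}(I)-\mathbb{E}[\mathcal{N}_{\ell }^{c}(I)]=M_{\ell }+R_{\ell }(I)$ with $\mathbb{E}[R_{\ell }^{2}(I)]=o(\ell ^{3})$.

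The key step is the triangle inequality for the Wasserstein distance,
$$d_{W}\!\left( \frac{\mathcal{N}_{\ell }^{c}(I)-\mathbb{E}[\mathcal{N}_{\ell }^{c}(I)]}{V_{\ell }},Z\right) \leq d_{W}\!\left( \frac{\mathcal{N}_{\ell }^{c}(I)-\mathbb{E}[\mathcal{N}_{\ell }^{c}(I)]}{V_{\ell }},\frac{M_{\ell }}{V_{\ell }}\right) +d_{W}\!\left( \frac{M_{\ell }}{V_{\ell }},Z\right) .$$
Bounding $d_{W}$ by the $L^{1}$-distance, the first summand is controlled by $\mathbb{E}|R_{\ell }(I)|/V_{\ell }\leq \sqrt{\mathbb{E}[R_{\ell }^{2}(I)]}/V_{\ell }=o(1)$. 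For the second summand I would apply a classical Berry-Esseen bound to $M_{\ell }/\sqrt{\mathrm{Var}(M_{\ell })}$, using that $M_{\ell }$ is a linear combination of finitely many independent centred variables $\{|a_{\ell m}|^{2}-1\}$ with uniformly bounded third moments; equivalently, $M_{\ell }$ is proportional to the second-chaos random variable $\int_{\mathbb{S}^{2}}H_{2}(f_{\ell }(x))\,dx$, for which the Nourdin-Peccati fourth-moment machinery (see \cite{noupebook}) yields the CLT immediately. Either way one obtains a rate $O(\ell ^{-1/2})$. Finally, to pass from $\sqrt{\mathrm{Var}(M_{\ell })}$ to $V_{\ell }$ as normalization, Cauchy-Schwarz together with $\mathbb{E}[R_{\ell }^{2}]=o(V_{\ell }^{2})$ gives $|V_{\ell }^{2}-\mathrm{Var}(M_{\ell })|=o(V_{\ell }^{2})$, so one last triangle-inequality step costs only $o(1)$.

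The real content lies in Theorem \ref{th1}; given the reduction, Corollary \ref{cor1} is essentially automatic. If anything, the only mild subtlety is the two-step replacement of the normalization (first passing from $V_{\ell }$ to $\sqrt{\mathrm{Var}(M_{\ell })}$, then applying Berry-Esseen), but this is routine bookkeeping that rests entirely on the variance asymptotics already encoded in Theorem \ref{th1}. In particular, no new analysis of the Hessian field or the Kac-Rice formula is needed at this stage.
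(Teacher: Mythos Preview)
Your proposal is correct and follows essentially the same route as the paper: the authors remark only that, by Theorem \ref{th1} and \eqref{H2}, the centred count is a deterministic multiple of a sum of independent centred variables plus an $L^{2}$-negligible remainder, from which the Wasserstein CLT is standard. You have simply spelled out explicitly the triangle-inequality and normalization-replacement steps that the paper leaves implicit.
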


The proof of the Corollary is standard, noting that by \eqref{H2} the right
hand side of \eqref{theo} is proportional to a sum of independent and
identically distributed random variables with zero mean and finite variance,
plus a remainder which is negligible in the mean square sense.

\begin{remark}
Note that for $I=[u,\infty )$ we have that%
\begin{align*}
\int_{u}^{\infty }p_{3}^{c}(t)dt & =\int_{u}^{\infty }\frac{1}{\sqrt{8\pi }}%
e^{-\frac{3}{2}t^{2}}(2-6t^{2})dt-\int_{u}^{\infty }\frac{1}{\sqrt{8\pi }}%
e^{-\frac{1}{2}t^{2}}(1-4t^{2}+t^{4})dt \\
& =-\frac{1}{2}\frac{\sqrt{2}}{\sqrt{\pi }}ue^{-\frac{3}{2}u^{2}}+\frac{1}{4}%
\frac{\sqrt{2}}{\sqrt{\pi }}ue^{-\frac{1}{2}u^{2}}\left( u^{2}-1\right),
\end{align*}%
and hence
\begin{equation*}
\mathcal{N}_{\ell }^{c}(I)-\mathbb{E}[\mathcal{N}_{\ell }^{c}(I)]=\frac{%
\lambda _{\ell }}{2}\,\left\{ -\frac{\sqrt{2}}{\sqrt{3}}H_{1}\left(\frac{%
\sqrt{3}}{\sqrt{2}}u \right)\phi \left(\frac{\sqrt{3}}{\sqrt{2}}u\right)+%
\frac{1}{2}H_{1}(u)H_{2}(u)\phi (u)\right\} \frac{1}{2\pi }\int_{\mathbb{S}%
^{2}}H_{2}(f_{\ell }(x))dx+o_{p}(\ell ^{3/2}).
\end{equation*}
\end{remark}

\subsection{Discussion}

Theorem \ref{th1} yields one more reduction principle for geometric
functionals of random spherical harmonics, entailing in particular the
asymptotically full correlation%
\begin{equation*}
\lim_{\ell \rightarrow \infty }\text{Corr}\left\{ \mathcal{N}_{\ell }^{c}(I),%
\mathcal{L}_{k}(A_{u}(f_{\ell };\mathbb{S}^{2}))\right\} =1\text{ for all }%
u\neq 0,\;I\text{ such that }\int_{I}p_{3}^{c}(t)dt\neq 0,\;\text{ and }%
k=0,1,2.
\end{equation*}%
It should be noted that $\int_{I}p_{3}^{c}(t)dt\neq 0$ holds for all
half-intervals of the form $[u,\infty )$, $u\neq 0,\pm \overline{u},$ where $%
\overline{u}\simeq 1.\,\allowbreak 209\,6$ solves $-2e^{-\overline{u}%
^{2}}+\left( \overline{u}^{2}-1\right) =0$. In other words, knowledge of the
number of critical points on any excursion set $A_{u}$ for $u\neq 0,\pm
\overline{u}$ allows to fully characterize (at least in the high-energy
limit) the geometry of these excursion sets, i.e., their area, their
boundary length, and their Euler-Poincar\'{e} characteristic. As mentioned
earlier, it is also remarkable that, while the computation of the number of
critical points requires the evaluation of higher-order derivatives, the
asymptotic expression given in Theorem 1 depends on the $L^{2}$ norm of $f_{\ell }$ and no extra information.

An interesting open question is the characterization of the behaviour of
critical points for intervals $I$ such that the integral of $p_{3}^{c}(.)$
vanishes, the most interesting case being clearly $I=\mathbb{R},$ i.e., the
total number of critical points. A heuristic rationale explaining why the
variance of the total number of critical points is asymptotically an order
of magnitude smaller (up to logarithmic factors) than for \textquotedblleft
typical" intervals $I$ can be given as follows; from (\ref{randomnorm}),
Theorem \ref{th1} can be viewed as stating that the fluctuations in the
number of critical points over a \textquotedblleft generic" interval $I$ are
proportional to the fluctuation in the random norm of the eigenfunctions.
Clearly this cannot be the case for $I=\mathbb{R};$ indeed, the total number
of critical points for a given realization $f_{\ell }$ is independent from
any scaling factor, including the $L^{2}(\mathbb{S}^{2})$ norm of the
eigenfunctions. This leaves open the question about the asymptotic
distribution for this total number $\mathcal{N}_{\ell }^{c}$; by analogy
with some recent results by \cite{MRW}, we conjecture that the following
expression holds:
\begin{equation*}
\mathcal{N}_{\ell }^{c}(I)-\mathbb{E}[\mathcal{N}_{\ell }^{c}(I)]=-\mu
^{c}(I)\ell ^{2}\frac{1}{4!}\int_{\mathbb{S}^{2}}H_{4}(f_{\ell
}(x))dx+O_{p}(\ell ^{2}).
\end{equation*}%
Note that%
\begin{eqnarray*}
\text{Var}\left( -\mu ^{c}(I)\ell ^{2}\frac{1}{4!}\int_{\mathbb{S}%
^{2}}H_{4}(f_{\ell }(x))dx\right) &=&\left[ \mu ^{c}(I)\right] ^{2}\ell ^{4}%
\frac{1}{576}\text{Var}\left( \int_{\mathbb{S}^{2}}H_{4}(f_{\ell
}(x))dx\right) \\
&=&\left[ \mu ^{c}(I)\right] ^{2}\ell ^{4}\frac{1}{576}\frac{576\log \ell }{%
\ell ^{2}}+O(\ell ^{2})=\left[ \mu ^{c}(I)\right] ^{2}\ell ^{2}\log \ell
+O(\ell ^{2}),
\end{eqnarray*}%
consistent with \eqref{fourthchaos}. In particular, for the total number of
critical points we conjecture the asymptotic equivalence%
\begin{equation*}
\mathcal{N}_{\ell }^{c}-\mathbb{E}[\mathcal{N}_{\ell }^{c}]=-\frac{1}{%
3^{3/2}\pi }\ell ^{2}\frac{1}{4!}\int_{\mathbb{S}^{2}}H_{4}(f_{\ell
}(x))dx+O_{p}(\ell ^{2}),
\end{equation*}%
the variance of the right-hand side being consistent with the result given
in \cite{CW}, where is shown that%
\begin{equation*}
\text{Var}(\mathcal{N}_{\ell }^{c})=\frac{\ell ^{2}\log \ell }{3^{3}\pi ^{2}}%
+O(\ell ^{2}).
\end{equation*}%
The investigation of this conjecture is left as a topic for further research.

\subsection{Plan of the paper}

As mentioned above, our proof requires two main ingredients, i.e., the
Kac-Rice formula to express the number of critical points as a local
functional of gradient and Hessian, and its expansion into Hermite
polynomials/Wiener-Ito chaoses. For a correct implementation of the Kac-Rice
formula, our first step is to review in Section \ref{GradHess} some
background differential geometry material on the gradient and Hessian
fields, and to compute their covariances; the properties of the resulting
covariance matrices are then established in Section \ref{cov_matx_ev}, where
it is shown in particular that the covariance function for the gradient
vector of random eigenfunctions evaluated at any two arbitrary points on the
sphere is non-singular. These results are then used in Section \ref{Kac-Rice}
to prove the validity (in the $L^{2}(\Omega )$ sense) of the expansion for
the Kac-Rice formula into Wiener chaoses, a technique exploited in other
recent papers on geometric functionals of Gaussian eigenfunctions, for
instance also in \cite{MW}, \cite{MR2015}, \cite{MPRW2015}, \cite{dalmao},
\cite{peccatirossi}, \cite{BMW}, \cite{CM2018}, \cite{MRW}. Finally, in
Section \ref{Sec:Proof} the expansion is analytically computed and the
simple dominating term is derived. A number of technical Lemmas related to
computations of covariances and conditional expected values are collected in
the Appendix.

\section{Gradient and Hessian Fields \label{GradHess}}

The proper computation of covariance matrices requires some careful
discussion on (standard) background material in differential geometry. For $%
x=(\theta _{x},\varphi _{x})\in \mathbb{S}^{2}\setminus \{N,S\}$ ($N,S$ are
the north and south poles i.e. $\theta =0$ and $\theta =\pi $ respectively),
the vectors
\begin{equation*}
\partial _{1;x}=\frac{\partial }{\partial \theta }\Big\vert_{\theta =\theta
_{x}},\hspace{1cm}\partial _{2;x}=\frac{1}{\sin \theta }\frac{\partial }{%
\partial \varphi }\Big\vert_{\theta =\theta _{x},\varphi =\varphi _{x}},
\end{equation*}%
constitute an orthonormal basis for the tangent plane $T_{x}(\mathbb{S}^{2})$%
; in these coordinates the gradient is given by $\nabla =(\frac{\partial }{%
\partial \theta },\frac{1}{\sin \theta }\frac{\partial }{\partial \varphi })$%
. For second-order derivatives, we shall use the following notation:%
\begin{equation*}
\partial _{11}:=\frac{\partial ^{2}}{\partial \theta ^{2}},\;\;\;\partial
_{21}:=\frac{\partial ^{2}}{\sin \theta \partial \varphi \partial \theta }%
,\;\;\;\partial _{22}:=\frac{\partial ^{2}}{\sin ^{2}\theta \partial \varphi
^{2}}.
\end{equation*}%
Let us now recall that the covariant Hessian of a function $f\in C^{2}(%
\mathbb{S}^{2})$ is the bilinear symmetric map from $C^{1}(T(\mathbb{S}%
^{2}))\times C^{1}(T(\mathbb{S}^{2}))$ to $C^{0}(\mathbb{S}^{2})$ defined by
\begin{equation*}
(\nabla ^{2}f)(X,Y)=XYf-\nabla _{X}Yf,\hspace{0.5cm}X,Y\in T(\mathbb{S}^{2}),
\end{equation*}%
where $\nabla _{X}$ denotes Levi-Civita connection, see \cite{adlertaylor},
Chapter 7 or the Appendix below for some details and definitions. For our
computations to follow we shall need the matrix-valued process $\nabla ^{2}{%
f_{\ell }}(x)$ with elements given by $\left\{ (\nabla ^{2}f)(\partial
_{a},\partial _{b})\right\} _{a,b=1,2}$; in coordinates as above, this
matrix can be expressed as (see Appendix A for some more details)%
\begin{eqnarray*}
\nabla ^{2}f_{\ell }(x) &=&\left[
\begin{matrix}
\frac{\partial ^{2}f_{\ell }(x)}{\partial \theta ^{2}} & \frac{1}{\sin
\theta _{x}}[\frac{\partial ^{2}f_{\ell }(x)}{\partial \theta \partial
\varphi }-\frac{\cos \theta _{x}}{\sin \theta _{x}}\frac{\partial f_{\ell
}(x)}{\partial \varphi }] \\
\frac{1}{\sin \theta _{x}}[\frac{\partial ^{2}f_{\ell }(x)}{\partial \theta
\partial \varphi }-\frac{\cos \theta _{x}}{\sin \theta _{x}}\frac{\partial
f_{\ell }(x)}{\partial \varphi }] & \frac{1}{\sin ^{2}\theta _{x}}[\frac{%
\partial ^{2}f_{\ell }(x)}{\partial \varphi ^{2}}+\sin \theta _{x}\cos
\theta \frac{\partial f_{\ell }(x)}{\partial \theta }]%
\end{matrix}%
\right] \\
&=&\left[
\begin{matrix}
\partial _{11}f_{\ell }(x) & (\partial _{21}-\cot \theta _{x}\;\partial
_{2})f_{\ell }(x) \\
(\partial _{21}-\cot \theta _{x}\;\partial _{2})f_{\ell }(x) & (\partial
_{22}+\cot \theta _{x}\;\partial _{1})f_{\ell }(x)%
\end{matrix}%
\right] .
\end{eqnarray*}%
We write as usual $vec(\nabla ^{2}f_{\ell })$ for the column vector stacking
the different elements of $(\nabla ^{2}f_{\ell }),$ i.e.,
\begin{equation*}
vec(\nabla ^{2}f_{\ell })=\left(
\begin{array}{c}
\partial _{11}f_{\ell }(x) \\
\partial _{21}f_{\ell }(x)-\cot \theta _{x}\;\partial _{2}f_{\ell }(x) \\
\partial _{22}f_{\ell }(x)+\cot \theta _{x}\;\partial _{1}f_{\ell }(x)%
\end{array}%
\right) .
\end{equation*}%
The next result gives the exact covariance matrix for the five-dimensional
vector including the elements of the gradient and the (covariant) Hessian:

\begin{proposition}
For every point $x\in \mathbb{S}^{2}\backslash \left\{ N;S\right\}$, the
distribution of the $5$-dimensional vector $(\nabla f_{\ell }, vec(\nabla
^{2}f_{\ell }))$ is zero-mean Gaussian, with variance-covariance matrix
\begin{equation}
\left(
\begin{array}{ccccc}
P_{\ell }^{\prime }(1) & 0 & 0 & 0 & 0 \\
0 & P_{\ell }^{\prime }(1) & 0 & 0 & 0 \\
0 & 0 & 3P_{\ell }^{\prime \prime }(1)+P_{\ell }^{\prime }(1) & 0 & P_{\ell
}^{\prime \prime }(1)+P_{\ell }^{\prime }(1) \\
0 & 0 & 0 & P_{\ell }^{\prime \prime }(1) & 0 \\
0 & 0 & P_{\ell }^{\prime \prime }(1)+P_{\ell }^{\prime }(1) & 0 & 3P_{\ell
}^{\prime \prime }(1)+P_{\ell }^{\prime }(1)%
\end{array}%
\right).  \label{CovMat}
\end{equation}
\end{proposition}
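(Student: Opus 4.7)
The plan is to compute the $5 \times 5$ covariance matrix by a direct chain-rule calculation on the covariance function $P_\ell(\cos \theta_{xy})$, after reducing to a convenient base point via isotropy. Since each entry of $(\nabla f_\ell(x), vec(\nabla^2 f_\ell)(x))$ is a linear functional of the centred Gaussian field $f_\ell$, the vector is automatically zero-mean Gaussian; only the covariances need to be verified.

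By isotropy of $f_\ell$ and the equivariance of the gradient and of the covariant Hessian under rotations that map orthonormal frames to orthonormal frames, the joint law of $(\nabla f_\ell(x), vec(\nabla^2 f_\ell)(x))$ is independent of the base point $x \in \mathbb{S}^2 \setminus \{N,S\}$. I therefore fix $x_0 = (\pi/2, 0)$ on the equator, where $\sin \theta_{x_0} = 1$ and $\cot \theta_{x_0} = 0$; the five quantities then reduce to the plain partial derivatives $(\partial_\theta f_\ell, \partial_\varphi f_\ell, \partial_\theta^2 f_\ell, \partial_\theta \partial_\varphi f_\ell, \partial_\varphi^2 f_\ell)$ at $x_0$. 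Any covariance is then $D_i^x D_j^y P_\ell(u)\vert_{x=y=x_0}$ with $u(x,y) = \cos\theta_x \cos\theta_y + \sin\theta_x \sin\theta_y \cos(\varphi_x-\varphi_y)$, and the values of the relevant derivatives of $u$ at $x = y = x_0$ are elementary: all four first partials vanish; $\partial_{\theta_x}^2 u = \partial_{\varphi_x}^2 u = -1$ (and the same in $y$), $\partial_{\theta_x}\partial_{\varphi_x} u = 0 = \partial_{\theta_y}\partial_{\varphi_y} u$; and the mixed $xy$ second partials satisfy $\partial_{\theta_x}\partial_{\theta_y} u = \partial_{\varphi_x}\partial_{\varphi_y} u = 1$, $\partial_{\theta_x}\partial_{\varphi_y} u = \partial_{\varphi_x}\partial_{\theta_y} u = 0$.

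The key simplification is the vanishing of all first partials of $u$ at $x = y = x_0$. When the chain rule is applied to $P_\ell(u)$ up to order four, every summand contains a product of derivatives of $u$; at $x = y = x_0$ any summand carrying a first-derivative factor dies, so only $P_\ell'(1)$ and $P_\ell''(1)$ can appear. The gradient block therefore collapses to $P_\ell'(1) I_2$; each cross gradient-Hessian entry reduces to $P_\ell'(1)\, \partial_i^x \partial_j^y \partial_k^y u\vert_{x=y=x_0}$, which vanishes because every surviving third-order derivative of $u$ at $x_0$ carries a factor $\sin(\varphi_x-\varphi_y)$ or $\cos\theta$ that is zero on the equator.

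For the $3 \times 3$ Hessian block, the surviving chain-rule terms of $\partial_i^x \partial_j^x \partial_k^y \partial_l^y P_\ell(u)$ at $x = y = x_0$ give
\[
P_\ell''(1)\bigl[(\partial_j^x \partial_k^y u)(\partial_i^x \partial_l^y u) + (\partial_i^x \partial_k^y u)(\partial_j^x \partial_l^y u) + (\partial_i^x \partial_j^x u)(\partial_k^y \partial_l^y u)\bigr] + P_\ell'(1)\, \partial_i^x \partial_j^x \partial_k^y \partial_l^y u.
\]
Substituting the values above, together with the fourth-order identities $\partial_{\theta_x}^2 \partial_{\theta_y}^2 u = \partial_{\varphi_x}^2 \partial_{\varphi_y}^2 u = \partial_{\theta_x}^2 \partial_{\varphi_y}^2 u = 1$ at $x = y = x_0$ (all of which follow from $\partial_{\theta_y}^2 u = -u$ and the $\varphi$-analogue), produces $3 P_\ell''(1) + P_\ell'(1)$ for the diagonal entries $(3,3)$ and $(5,5)$, $P_\ell''(1) + P_\ell'(1)$ for $(3,5)$, $P_\ell''(1)$ for $(4,4)$, and zero for the remaining cross entries, each of which contains a factor $\sin(\varphi_x-\varphi_y)$ vanishing on the equator. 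The only real obstacle is the bookkeeping of fourth-order chain-rule terms; the first-derivative-vanishing principle makes this manageable. As a consistency check, the eigenfunction identity $\Delta_{\mathbb{S}^2} f_\ell = -\ell(\ell+1) f_\ell$ forces $\mathrm{Var}((\nabla^2 f_\ell)_{11} + (\nabla^2 f_\ell)_{22}) = \ell^2 (\ell+1)^2$ to equal $8 P_\ell''(1) + 4 P_\ell'(1)$, which is confirmed by $P_\ell'(1) = \ell(\ell+1)/2$ and $P_\ell''(1) = (\ell-1)\ell(\ell+1)(\ell+2)/8$.
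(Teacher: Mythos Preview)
Your argument is correct and is a genuinely different route from the paper's. The paper computes each covariance of the raw partial derivatives $\partial_{ab}f_\ell(x)$ at a \emph{general} point $x$, via nine separate lemmas collected in Appendix~B; these raw covariances carry explicit $\cot\theta_x$ terms (for instance $\mathrm{Var}(\partial_{22}f_\ell(x))=3P_\ell''(1)+P_\ell'(1)+\cot^2\theta_x\,P_\ell'(1)$), and the proof then assembles them into the covariant-Hessian entries and verifies that the Christoffel corrections cancel the location-dependent factors exactly. Your approach instead invokes isotropy up front to reduce to the equatorial point $x_0=(\pi/2,0)$, where $\cot\theta_{x_0}=0$ and the covariant Hessian coincides with the flat second partials; you then exploit the vanishing of all first partials of $u=\langle x,y\rangle$ on the diagonal to collapse the fourth-order chain rule to a single compact formula. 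This is considerably shorter and more systematic than the paper's case-by-case treatment. What the paper's longer route buys is an explicit demonstration (emphasised in the remark following the proposition) of \emph{how} the Levi--Civita connection renders the variances of second-order derivatives constant over $\mathbb{S}^2$; in your approach this cancellation is hidden inside the isotropy reduction.
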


\begin{proof}
The following results are proved in Lemma \ref{1-1;2-2;1-2} in Appendix B:
\begin{equation*}
\text{Var}(\partial _{1}f_{\ell }(x))=\text{Var}(\partial _{2}f_{\ell
}(x))=P_{\ell }^{\prime }(1), \;\;\; \text{Cov}(\partial _{1}f_{\ell
}(x),\partial _{1}f_{\ell }(x))=0,
\end{equation*}%
for the higher order derivatives, we have (see Lemmas \ref{11-11}, \ref%
{21-21}, \ref{22-22}, \ref{22-11}, \ref{11-21}, \ref{21-22})
\begin{align*}
\text{Var}(\partial _{11}f_{\ell }(x)) &=3P_{\ell }^{\prime \prime
}(1)+P_{\ell }^{\prime }(1), \\
\text{Var}(\partial _{21}f_{\ell }(x)) &=P_{\ell }^{\prime \prime }(1)+\cot
^{2}\theta _{x}P_{\ell }^{\prime }(1), \\
\text{Var}(\partial _{22}f_{\ell }(x)) &=3P_{\ell }^{\prime \prime
}(1)+P_{\ell }^{\prime }(1)-\cot ^{2}\theta _{x} \, P_{\ell }^{\prime }(1),
\end{align*}%
and
\begin{align*}
\text{Cov}(\partial _{22}f_{\ell }(x),\partial _{11}f_{\ell }(x)) &=P_{\ell
}^{\prime \prime }(1)+P_{\ell }^{\prime }(1), \\
\text{Cov}(\partial _{22}f_{\ell }(x),\partial _{21}f_{\ell }(x)) &=\text{Cov%
}(\partial _{11}f_{\ell }(x),\partial _{21}f_{\ell }(x))=0.
\end{align*}%
Moreover in Lemmas \ref{11-1;11-2}, \ref{12-1;12-2}, \ref{22-1;22-2}, it is
shown that%
\begin{align*}
\text{Cov}(\partial _{11}f_{\ell }(x),\partial _{1}f_{\ell }(x)) &=\text{Cov}%
(\partial _{11}f_{\ell }(x),\partial _{2}f_{\ell }(x))=0, \\
\text{Cov}(\partial _{21}f_{\ell }(x),\partial _{1}f_{\ell }(x)) &=0,\;\;
\text{Cov}(\partial _{21}f_{\ell }(x),\partial _{2}f_{\ell }(x))=\cot
\theta_{x} \, P_{\ell }^{\prime }(1), \\
\text{Cov}(\partial _{22}f_{\ell }(x),\partial _{1}f_{\ell }(x)) &=-\cot
\theta_{x}\, P_{\ell }^{\prime }(1), \;\; \text{Cov}(\partial _{22}f_{\ell
}(x),\partial_{2}f_{\ell }(x))=0.
\end{align*}%
The remaining computations are all straightforward - it is interesting,
however, to note the delicate cancellations that occur. For instance, we
have, for any $x\in \mathbb{S}^{2}$,
\begin{equation*}
\text{Var}\left(\frac{1}{\sin ^{2}\theta _{x}}\frac{\partial ^{2}f_{\ell }(x)%
}{\partial \varphi ^{2}}+\cot \theta _{x}\frac{\partial f_{\ell }(x)}{%
\partial \theta }\right)
\end{equation*}%
\begin{align*}
&=\text{Var}(\partial _{22}f_{\ell }(x))+\cot ^{2}\theta _{x}Var(\partial
_{1}f_{\ell }(x))+2\cot \theta _{x} \text{Cov}(\partial _{22}f_{\ell
}(x),\partial _{2}f_{\ell }(x)) \\
&=3P_{\ell }^{\prime \prime }(1)+P_{\ell }^{\prime }(1)+\cot ^{2}\theta
_{x}P_{\ell }^{\prime }(1)+\cot ^{2}\theta _{x}P_{\ell }^{\prime }(1)-2\cot
^{2}\theta _{x}P_{\ell }^{\prime }(1) \\
&=3P_{\ell }^{\prime \prime }(1)+P_{\ell }^{\prime }(1).
\end{align*}%
Likewise%
\begin{equation*}
\text{Var}\left(\frac{1}{\sin \theta }\frac{\partial ^{2}f_{\ell }(x)}{%
\partial \theta \partial \varphi }-\cot \theta _{x}\frac{1}{\sin \theta _{x}}%
\frac{\partial f_{\ell }(x)}{\partial \varphi }\right)
\end{equation*}%
\begin{align*}
&=\text{Var}(\partial _{12}f_{\ell }(x)+\cot ^{2}\theta _{x} \text{Var}%
(\partial _{2}f_{\ell }(x))-2\cot \theta _{x} \text{Cov}(\partial
_{21}f_{\ell }(x),\partial _{2}f_{\ell }(x)) \\
&=P_{\ell }^{\prime \prime }(1)+\cot ^{2}\theta _{x}P_{\ell }^{\prime
}(1)+\cot ^{2}\theta _{x}P_{\ell }^{\prime }(1)-2\cot ^{2}\theta _{x}P_{\ell
}^{\prime }(1) \\
&=P_{\ell }^{\prime \prime }(1).
\end{align*}
All the remaining terms are immediate consequences of the Lemmas.
\end{proof}

\begin{remark}
It follows from the previous proposition that the elements of the covariant
Hessian (the second-order covariant derivatives) have finite and constant
variance for all locations on the sphere. It should be stressed that this is
not the case for the standard derivatives, i.e. the elements of the iterated
gradient $\nabla ^{2}f_{\ell };$ in fact, we have shown above that%
\begin{align*}
\text{Var}(\partial _{11}f_{\ell }(x)) &=3P_{\ell }^{\prime \prime
}(1)+P_{\ell }^{\prime }(1), \\
\text{Var}(\partial _{21}f_{\ell }(x)) &=P_{\ell }^{\prime \prime }(1)+2\cot
^{2}\theta _{x}P_{\ell }^{\prime }(1), \\
\text{Var}(\partial _{22}f_{\ell }(x)) &=3P_{\ell }^{\prime \prime
}(1)+P_{\ell }^{\prime }(1)-\cot ^{2}\theta _{x}P_{\ell }^{\prime }(1),
\end{align*}%
so that the variance of $(\partial _{12}f_{\ell }(x),\partial _{22}f_{\ell
}(x))$ is not constant over $\mathbb{S}^{2}$ - and indeed not even bounded,
so that $L^{2}$ expansions would not be feasible. On the other hand, in the
element of the (covariant) Hessian the extra terms introduced by means of
the Levi-Civita connection and the Christoffel symbols ensure the exact
cancellation of the location-dependent factors. Note also that while
covariant derivatives of different orders are zero when evaluated on the
same point, this is not the case for standard derivatives, indeed both
\begin{equation*}
\text{Cov}(\partial _{21}f_{\ell }(x),\partial _{2}f_{\ell }(x))=-\text{Cov}%
(\partial _{22}f_{\ell }(x),\partial _{1}f_{\ell }(x))=\cot \theta
_{x}P_{\ell }^{\prime }(1).
\end{equation*}%
These covariances are zero only for $\theta _{x}=\frac{\pi }{2},$ where the
tangent plane has a ``Euclidean" basis $\frac{\partial }{\partial \theta },%
\frac{\partial }{\partial \varphi }$ and the covariant derivatives take the
same values as the standard ones.
\end{remark}

\begin{remark}
Note that each one of the three quantities%
\begin{equation*}
\text{Var}\left(\frac{\partial ^{2}f_{\ell }}{\partial \theta ^{2}}%
\right),\;\;\; \text{Var}\left(\frac{1}{\sin ^{2}\theta }\frac{\partial
^{2}f_{\ell }}{\partial \varphi ^{2}}+\cot \theta \frac{\partial f_{\ell }}{%
\partial \theta }\right),\;\;\; \text{Cov}\left(\frac{\partial ^{2}f_{\ell }%
}{\partial \theta ^{2}},\frac{1}{\sin ^{2}\theta }\frac{\partial ^{2}f_{\ell
}}{\partial \varphi ^{2}}+\cot \theta \frac{\partial f_{\ell }}{\partial
\theta }\right),
\end{equation*}%
could actually be evaluated immediately from the other two. Indeed,
recalling that the trace of the Hessian matrix is indeed the spherical
Laplacian%
\begin{equation*}
\Delta _{\mathbb{S}^{2}}f_{\ell }(x)=\frac{\partial ^{2}f_{\ell }}{\partial
\theta ^{2}}+\frac{1}{\sin ^{2}\theta }\frac{\partial ^{2}f_{\ell }}{%
\partial \varphi ^{2}}+\cot \theta \frac{\partial f_{\ell }}{\partial \theta
}=-\lambda _{\ell }f_{\ell }(x),
\end{equation*}%
we have the identity%
\begin{equation*}
\lambda _{\ell }^{2}=\text{Var}(-\lambda _{\ell }f_{\ell })=\text{Var}\left(%
\frac{\partial ^{2}f_{\ell }}{\partial \theta ^{2}}+\frac{1}{\sin ^{2}\theta
}\frac{\partial ^{2}f_{\ell }}{\partial \varphi ^{2}}+\cot \theta \frac{%
\partial f_{\ell }}{\partial \theta }\right)
\end{equation*}%
\begin{equation*}
=\text{Var}\left(\frac{\partial ^{2}f_{\ell }}{\partial \theta ^{2}}\right)+%
\text{Var}\left(\frac{1}{\sin ^{2}\theta }\frac{\partial ^{2}f_{\ell }}{%
\partial \varphi ^{2}}+\cot \theta \frac{\partial f_{\ell }}{\partial \theta
}\right)+2 \text{Cov}\left(\frac{\partial ^{2}f_{\ell }}{\partial \theta ^{2}%
},\frac{1}{\sin ^{2}\theta }\frac{\partial ^{2}f_{\ell }}{\partial \varphi
^{2}}+\cot \theta \frac{\partial f_{\ell }}{\partial \theta }\right),
\end{equation*}%
whence, for instance, using%
\begin{equation*}
\text{Var}\left(\frac{\partial ^{2}f_{\ell }}{\partial \theta ^{2}}%
\right)=3P_{\ell }^{\prime \prime }(1)+P_{\ell }^{\prime }(1),
\end{equation*}%
\begin{equation*}
\text{Cov}\left(\frac{\partial ^{2}f_{\ell }}{\partial \theta ^{2}},\frac{1}{%
\sin ^{2}\theta }\frac{\partial ^{2}f_{\ell }}{\partial \varphi ^{2}}+\sin
\theta \cos \theta \frac{\partial f_{\ell }}{\partial \theta }%
\right)=P_{\ell }^{\prime \prime }(1)+P_{\ell }^{\prime }(1),
\end{equation*}%
we obtain immediately%
\begin{equation*}
\text{Var}\left(\frac{1}{\sin ^{2}\theta }\frac{\partial ^{2}f_{\ell }}{%
\partial \varphi ^{2}}+\cot \theta \frac{\partial f_{\ell }}{\partial \theta
}\right)
\end{equation*}%
\begin{align*}
&=\lambda _{\ell }^{2}-3P_{\ell }^{\prime \prime }(1)-P_{\ell }^{\prime
}(1)-2P_{\ell }^{\prime \prime }(1)-2P_{\ell }^{\prime }(1)=\lambda _{\ell
}^{2}-5P_{\ell }^{\prime \prime }(1)-3P_{\ell }^{\prime }(1) \\
&=\lambda _{\ell }^{2}-5\frac{\lambda _{\ell }}{8}(\lambda _{\ell }-2)-\frac{%
3}{2}\lambda _{\ell }=3\frac{\lambda _{\ell }}{8}(\lambda _{\ell }-2)+\frac{%
\lambda _{\ell }}{2} \\
&=3P_{\ell }^{\prime \prime }(1)+P_{\ell }^{\prime }(1)\text{ .}
\end{align*}
\end{remark}

\section{Covariance Matrices\label{cov_matx_ev}}

Recall first that, since the $f_{\ell }$ are eigenfunctions of the spherical
Laplacian, hence, for every $x\in \mathbb{S}^{2}$, we can write
\begin{equation}
f_{\ell }(x)=-\frac{\Delta _{\mathbb{S}^{2}}f_{\ell }(x)}{\lambda _{\ell }};
\label{linear_dp}
\end{equation}%
note that at the critical points we have $\Delta _{\mathbb{S}^{2}}f_{\ell
}=\partial _{11}f_{\ell }+\partial _{22}f_{\ell }$, whence their number with
value in $I$ is, by symmetry, given by
\begin{equation*}
\mathcal{N}_{\ell }^{c}(I)=\#\{x\in \mathbb{S}^{2}:\; \frac{\Delta _{\mathbb{%
S}^{2}}f_{\ell }(x)}{\lambda _{\ell }}\in I,\;\;\nabla f_{\ell
}(x)=0\}=\#\{x\in \mathbb{S}^{2}:\; \frac{\partial _{11}f_{\ell }+\partial
_{22}f_{\ell }}{\lambda _{\ell }}\in I,\;\;\nabla f_{\ell }(x)=0\}.
\end{equation*}

\noindent For every $x\in \mathbb{S}^{2}$, let us now denote by $\Sigma
_{\ell }(x,y)$ the covariance matrix for the $10$-dimensional Gaussian
random vector
\begin{equation*}
(\nabla f_{\ell }(x),\nabla f_{\ell }(y),vec(\nabla ^{2}f_{\ell
}(x)),vec(\nabla ^{2}f_{\ell }(y))),
\end{equation*}%
which combines the gradient and the elements of the Hessian evaluated at $x$%
, $y$; we shall write
\begin{equation*}
\Sigma _{\ell }(x,y)=\left(
\begin{array}{cc}
A_{\ell }(x,y) & B_{\ell }(x,y) \\
B_{\ell }^{T}(x,y) & C_{\ell }(x,y)%
\end{array}%
\right),
\end{equation*}%
where the $A_{\ell }$ and $C_{\ell }$ components collect the variances of
the gradient and Hessian terms respectively, while the matrix $B_{\ell }$
collects the covariances between first and second order covariant
derivatives. More explicitly, we have that
\begin{equation*}
A_{\ell }(x,y)_{4\times 4}=\left(
\begin{array}{cc}
a_{\ell }(x,x) & a_{\ell }(x,y) \\
a_{\ell }(y,x) & a_{\ell }(y,y)%
\end{array}%
\right) =\left. \mathbb{E}\left[ \left(
\begin{array}{c}
\nabla f_{\ell }(\bar{x})^{T} \\
\nabla f_{\ell }(\bar{y})^{T}%
\end{array}%
\right) \left(
\begin{array}{cc}
\nabla f_{\ell }(x) & \nabla f_{\ell }(y)%
\end{array}%
\right) \right] \right\vert _{{x=\bar{x}},{y=\bar{y}}},
\end{equation*}%
and more precisely
\begin{equation*}
a_{\ell }(x,x)=a_{\ell }(y,y)=\left(
\begin{array}{cc}
P_{\ell }^{\prime }(1) & 0 \\
0 & P_{\ell }^{\prime }(1)%
\end{array}%
\right).
\end{equation*}%
Recalling also that $P_{\ell }^{\prime }(1)=\frac{\ell (\ell +1)}{2}$, for $%
\lambda _{\ell }=\ell (\ell +1)$, we have%
\begin{equation*}
A_{\ell }(x,y)=\left(
\begin{array}{cccc}
\frac{\lambda _{\ell }}{2} & 0 & \alpha _{1,\ell }(x,y) & 0 \\
0 & \frac{\lambda _{\ell }}{2} & 0 & \alpha _{2,\ell }(x,y) \\
\alpha _{1,\ell }(x,y) & 0 & \frac{\lambda _{\ell }}{2} & 0 \\
0 & \alpha _{2,\ell }(x,y) & 0 & \frac{\lambda _{\ell }}{2}%
\end{array}%
\right) .
\end{equation*}

\noindent The matrix $B_{\ell }$ collects the covariances between first and
second order derivatives, and is given by
\begin{align*}
B_{\ell }(x,y)_{4\times 6}& =\left. \mathbb{E}\left[ \left(
\begin{array}{c}
vec(\nabla f_{\ell }(\bar{x})) \\
vec(\nabla f_{\ell }(\bar{y}))%
\end{array}%
\right) \left(
\begin{array}{cc}
vec(\nabla ^{2}f_{\ell }(x))^{T} & vec(\nabla ^{2}f_{\ell }(y))^{T}%
\end{array}%
\right) \right] \right\vert _{{x=\bar{x}},{y=\bar{y}}} \\
& =\left(
\begin{array}{cc}
b_{\ell }(x,x) & b_{\ell }(x,y) \\
b_{\ell }(y,x) & b_{\ell }(y,y)%
\end{array}%
\right) .
\end{align*}%
It is well-known that for Gaussian isotropic processes, for $i,j=1,2$, the
second derivatives $e_{i}^{x}e_{j}^{x}f_{\ell }(x)$ are independent of $%
e_{i}^{x}f_{\ell }(x)$ at every fixed point $x\in \mathbb{S}^{2}$ see, e.g.,
\cite{adlertaylor} Section 5.5; we have then
\begin{equation*}
b_{\ell }(x,x)=b_{\ell }(y,y)=\left(
\begin{array}{ccc}
0 & 0 & 0 \\
0 & 0 & 0%
\end{array}%
\right).
\end{equation*}

\noindent Finally, the matrix $C_{\ell }$ contains the variances of
second-order derivatives, and we have
\begin{equation*}
C_{\ell }(x,y)_{6\times 6}=\left. \mathbb{E}\left[
\begin{array}{c}
\left(
\begin{array}{c}
\nabla ^{2}f_{\ell }(\bar{x})^{T} \\
\nabla ^{2}f_{\ell }(\bar{y})^{T}%
\end{array}%
\right) \left(
\begin{array}{cc}
\nabla ^{2}f_{\ell }({x}) & \nabla ^{2}f_{\ell }(\bar{y})%
\end{array}%
\right)%
\end{array}%
\right] \right\vert _{{x=\bar{x}},{y=\bar{y}}}=\left(
\begin{array}{cc}
c_{\ell }(x,x) & c_{\ell }(x,y) \\
c_{\ell }(y,x) & c_{\ell }(y,y)%
\end{array}%
\right) .
\end{equation*}%
From direct calculations and the formula $P_{\ell }^{\prime \prime }(1)=%
\frac{\lambda _{\ell }}{8}(\lambda _{\ell }-2)$, it immediately follows that
\begin{align*}
c_{\ell }(x,x)& =\left(
\begin{array}{ccc}
3P_{\ell }^{\prime \prime }(1)+P_{\ell }^{\prime }(1) & 0 & P_{\ell
}^{\prime \prime }(1)+P_{\ell }^{\prime }(1) \\
0 & P_{\ell }^{\prime \prime }(1) & 0 \\
P_{\ell }^{\prime \prime }(1)+P_{\ell }^{\prime }(1) & 0 & 3P_{\ell
}^{\prime \prime }(1)+P_{\ell }^{\prime }(1)%
\end{array}%
\right) \\
& =\left(
\begin{array}{ccc}
\frac{\lambda _{\ell }}{8}[3\lambda _{\ell }-2] & 0 & \frac{\lambda _{\ell }%
}{8}[\lambda _{\ell }+2] \\
0 & \frac{\lambda _{\ell }}{8}[\lambda _{\ell }-2] & 0 \\
\frac{\lambda _{\ell }}{8}[\lambda _{\ell }+2] & 0 & \frac{\lambda _{\ell }}{%
8}[3\lambda _{\ell }-2]%
\end{array}%
\right) =c_{\ell }(y,y).
\end{align*}

The following Proposition is really a special case of Lemma C1 in \cite%
{WigmanJMP2009}; nevertheless we include a short proof for completeness:

\begin{proposition}
\label{Thecondition} (See \cite{WigmanJMP2009}) For every $(x,y)\in \mathbb{S%
}^{2}$ such that $d_{\mathbb{S}^{2}}(x,y)\neq 0,\pi $, the Gaussian vector $%
(\nabla f_{\ell }(x),\nabla f_{\ell }(y))$ has a non-degenerate density
function, i.e., the covariance matrix $A_{\ell }(x,y)$ is invertible.
\end{proposition}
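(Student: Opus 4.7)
The plan is to reduce the $4\times 4$ determinant $\det A_\ell(x,y)$ to a product of two $2\times 2$ determinants and then bound the resulting expressions by strict inequalities on derivatives of Legendre polynomials. First, observe from the displayed form that all cross-covariances between the $\partial_1$-components and the $\partial_2$-components vanish. Permuting the variables to the order $(\partial_1 f_\ell(x),\partial_1 f_\ell(y),\partial_2 f_\ell(x),\partial_2 f_\ell(y))$ puts $A_\ell(x,y)$ in block-diagonal form with two symmetric $2\times 2$ blocks
$$
M_i(x,y)=\begin{pmatrix}\lambda_\ell/2 & \alpha_{i,\ell}(x,y)\\ \alpha_{i,\ell}(x,y) & \lambda_\ell/2\end{pmatrix},\qquad i=1,2,
$$
whence $\det A_\ell(x,y)=\prod_{i=1,2}\bigl(\tfrac{\lambda_\ell^{2}}{4}-\alpha_{i,\ell}(x,y)^{2}\bigr)$. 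Invertibility is thus equivalent to the strict inequalities $|\alpha_{i,\ell}(x,y)|<\lambda_\ell/2$ for $i=1,2$.

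Next I use isotropy to reduce to a canonical configuration: any pair $(x,y)$ with $d_{\mathbb{S}^{2}}(x,y)=\psi\in(0,\pi)$ may be rotated to the same meridian $\varphi_x=\varphi_y=0$ with $\theta_y-\theta_x=\psi$, in which case the basis vectors $\partial_{1;x},\partial_{1;y}$ are tangent to the connecting geodesic and no polar singularities occur. Differentiating the covariance kernel $\mathbb{E}[f_\ell(x)f_\ell(y)]=P_\ell(\cos(\theta_x-\theta_y))$ twice then yields the closed forms
$$
\alpha_{2,\ell}(x,y)=P_\ell'(\cos\psi),\qquad
\alpha_{1,\ell}(x,y)=\cos\psi\,P_\ell'(\cos\psi)+\sin^{2}\psi\,P_\ell''(\cos\psi),
$$
and via the Legendre differential equation $(1-t^{2})P_\ell''(t)=2tP_\ell'(t)-\lambda_\ell P_\ell(t)$ the second simplifies to $3\cos\psi\,P_\ell'(\cos\psi)-\lambda_\ell P_\ell(\cos\psi)$. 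Both required strict inequalities then reduce to one-dimensional estimates on Legendre polynomials, the cleanest being $|P_\ell'(t)|<P_\ell'(1)=\lambda_\ell/2$ for $t\in(-1,1)$, a classical consequence of Markov's inequality and the fact that the interior critical points of $P_\ell'$ are strictly dominated by its boundary values.

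As a safety net against accidental saturation at an interior value of $\psi$, one may fall back on a Cauchy-Schwarz argument applied directly to the jointly Gaussian pair $(\partial_i f_\ell(x),\partial_i f_\ell(y))$, both marginals having variance $\lambda_\ell/2$: equality would force the a.s.\ identity $\partial_i f_\ell(y)=c\,\partial_i f_\ell(x)$ with $c\in\{-1,+1\}$, which by the representation $f_\ell=\tfrac{\sqrt{4\pi}}{\sqrt{2\ell+1}}\sum_m a_{\ell m}Y_{\ell m}$ with i.i.d.\ Gaussian coefficients translates into the rigid deterministic identity $\partial_i Y_{\ell m}(y)=c\,\partial_i Y_{\ell m}(x)$ for every $m=-\ell,\ldots,\ell$; this is easily seen to fail for $\psi\in(0,\pi)$ by considering two choices of $m$ for which the ratio of gradients at $x$ and $y$ differs. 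The principal obstacle I anticipate is not the final one-dimensional inequality but rather the algebraic bookkeeping in the derivation of the closed forms for $\alpha_{1,\ell},\alpha_{2,\ell}$, since the coordinate expressions contain $\cot\theta$-type factors coming from the non-Euclidean basis $(\partial_1,\partial_2)$ that must cancel to leave intrinsic quantities depending only on $\cos d_{\mathbb{S}^2}(x,y)$; once this cancellation is verified, strict positivity of $\det A_\ell(x,y)$ follows and the proposition is established.
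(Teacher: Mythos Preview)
Your proposal is correct and follows essentially the same route as the paper. Both arguments block-diagonalise $A_\ell$ into two $2\times 2$ blocks, reduce by isotropy to a canonical pair of points (you choose a common meridian, the paper chooses the equator; this merely swaps the roles of $\alpha_{1,\ell}$ and $\alpha_{2,\ell}$), dispatch the ``perpendicular'' covariance $P_\ell'(\cos\psi)$ via the strict bound $|P_\ell'(t)|<P_\ell'(1)$, and handle the ``tangential'' covariance by the Cauchy--Schwarz/linear-dependence contradiction using the expansion in spherical harmonics---which is exactly the paper's argument for that block. One small slip: on the meridian the tangential covariance is $\cos\psi\,P_\ell'(\cos\psi)-\sin^2\psi\,P_\ell''(\cos\psi)$ (minus, not plus), so the Legendre-ODE simplification becomes $\lambda_\ell P_\ell(\cos\psi)-\cos\psi\,P_\ell'(\cos\psi)$ rather than $3\cos\psi\,P_\ell'-\lambda_\ell P_\ell$; this is harmless since only $\alpha_{1,\ell}^2$ enters the determinant and your Cauchy--Schwarz fallback does not use the explicit form anyway.
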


\begin{proof}
We compute the covariance matrix at the equator, where our choice of
coordinates is such that both the vectors $\partial _{1}$ and $\partial _{2}$
are moved by parallel transport from $x$ to $y.$ Given however any arbitrary
orientations in $x$ and $y$, we would obtain the matrix $R(\psi )A_{\ell
}(x,y)R(\psi )^{\prime },$ where%
\begin{equation*}
R(\psi )=\left(
\begin{array}{cccc}
1 & 0 & 0 & 0 \\
0 & 1 & 0 & 0 \\
0 & 0 & \cos \psi  & -\sin \psi  \\
0 & 0 & \sin \psi  & \cos \psi
\end{array}%
\right)
\end{equation*}%
is the matrix that describes the rotation between $\partial _{a,x}$ and $%
\partial _{b,y}$ after transportation, $a,b=1,2$ (i.e., $\psi $ is the angle
that occurs between $\partial _{1,x}$ when it is transported to $y$ along a
geodesic in such a way that it remains parallel and the vector $\partial
_{1,y};$ this angle is the same for both vectors because the basis are
orthonormal). Clearly the matrices $R$ are full rank and hence have no
impact on the rank.

We are now in the position to describe explicitly the symmetric matrix $%
A_{\ell }(x,y);$ for notational simplicity and without loss of generality,
we focus on pairs of points lying on the \textquotedblleft equator" $\theta
_{x}=\theta _{y}=\frac{\pi }{2},$ where we obtain
\begin{equation*}
A_{\ell }(x,y)=\left(
\begin{array}{cccc}
P_{\ell }^{\prime }(1) & \ast & \ast & \ast \\
0 & P_{\ell }^{\prime }(1) & \ast & \ast \\
P_{\ell }^{\prime }(\left\langle x,y\right\rangle ) & 0 & P_{\ell }^{\prime
}(1) & \ast \\
0 & -P_{\ell }^{\prime \prime }(\left\langle x,y\right\rangle )\sin
^{2}(\varphi _{x}-\varphi _{y})+P_{\ell }^{\prime }(\left\langle
x,y\right\rangle )\cos (\varphi _{x}-\varphi _{y}) & 0 & P_{\ell }^{\prime
}(1)%
\end{array}%
\right)
\end{equation*}%
with determinant equal to%
\begin{align*}
& \left\{ P_{\ell }^{\prime }(1)\right\} ^{2}\left[ \left\{ P_{\ell
}^{\prime }(1)\right\} ^{2}-\left\{ P_{\ell }^{\prime \prime }(\left\langle
x,y\right\rangle )\sin ^{2}(\varphi _{x}-\varphi _{y})+P_{\ell }^{\prime
}(\left\langle x,y\right\rangle )\cos (\varphi _{x}-\varphi _{y})\right\}
^{2}\right] \\
& -\left\{ P_{\ell }^{\prime }(\left\langle x,y\right\rangle )\right\} ^{2}
\left[ \left\{ P_{\ell }^{\prime }(1)\right\} ^{2}-\left\{ P_{\ell }^{\prime
\prime }(\left\langle x,y\right\rangle )\sin ^{2}(\varphi _{x}-\varphi
_{y})+P_{\ell }^{\prime }(\left\langle x,y\right\rangle )\cos (\varphi
_{x}-\varphi _{y})\right\} ^{2}\right] \\
& =\left[ \left\{ P_{\ell }^{\prime }(1)\right\} ^{2}-\left[ P_{\ell
}^{\prime }(\left\langle x,y\right\rangle )\right\} ^{2}\right] \left\{
\left\{ P_{\ell }^{\prime }(1)\right\} ^{2}-\left\{ P_{\ell }^{\prime \prime
}(\left\langle x,y\right\rangle )\sin ^{2}(\varphi _{x}-\varphi
_{y})+P_{\ell }^{\prime }(\left\langle x,y\right\rangle )\cos (\varphi
_{x}-\varphi _{y})\right\} ^{2}\right] .
\end{align*}%
Now%
\begin{equation*}
\left\{ P_{\ell }^{\prime }(1)\right\} ^{2}-\left\{ P_{\ell }^{\prime
}(\left\langle x,y\right\rangle )\right\} ^{2}>0\text{ for all }x\neq y,
\end{equation*}%
by standard properties of Legendre polynomials derivatives, which have a
unique maximum at $u=1$. To prove the latter statement, assume by
contradiction that $P_{\ell }^{\prime }(1)=P_{\ell }^{\prime }(\left\langle
x,y\right\rangle )$ for some $x\neq y;$ then we should have some $\alpha $
of modulus unity and some $\varphi $ such that
\begin{equation*}
\partial _{1,x}f_{\ell }=\sum_{m}a_{\ell m}P_{\ell m}^{\prime }(\cos \theta
)=\alpha \sum_{m}a_{\ell m}P_{\ell m}^{\prime }(\cos \theta )\exp (im\varphi
)=\alpha \;\partial _{1,y}f_{\ell },
\end{equation*}%
the equality holding in $L^{2}(\omega ),$ and hence with probability one;
this conclusion is clearly impossible (for $\varphi \neq 0,\pi )$, as it
requires $\exp (im\varphi )=1$ for all $m=-\ell ,...,\ell $.\emph{\ }%
Likewise,%
\begin{align*}
& \left\{ P_{\ell }^{\prime }(1)\right\} ^{2}-\left\{ P_{\ell }^{\prime
\prime }(\left\langle x,y\right\rangle )\sin ^{2}(\varphi _{x}-\varphi
_{y})+P_{\ell }^{\prime }(\left\langle x,y\right\rangle )\cos (\varphi
_{x}-\varphi _{y})\right\} ^{2} \\
& =\text{Var}(\partial _{1,x}f_{\ell })\text{Var}(\partial _{2,y}f_{\ell
})-\left\{ \text{Cov}(\partial _{2,x}f_{\ell },\partial _{2,y}f_{\ell
})\right\} ^{2}>0
\end{align*}%
unless
\begin{equation}
\partial _{2,y}f_{\ell }=\alpha \;\partial _{2,x}f_{\ell },\text{ for some }%
\alpha \text{ such that }|\alpha |=1,  \label{ident}
\end{equation}%
for some $x,y.$ However we know that, for $\theta _{x}=\theta _{y}=\frac{\pi
}{2}$,
\begin{align*}
\partial _{2,y}f_{\ell }& =\sum_{m=-\ell }^{\ell }i\,m\,a_{\ell m}\,Y_{\ell
m}(\theta _{y},\varphi _{y}) \\
& =\sum_{m=-\ell }^{\ell }i\,m\,a_{\ell m}\,e^{im\varphi _{y}}\,\sqrt{\frac{%
2\ell +1}{4\pi }}\sqrt{\frac{(\ell -m)!}{(\ell +m)!}}P_{\ell m}(0),
\end{align*}%
and
\begin{equation*}
\partial _{2,x}f_{\ell }=\sum_{m=-\ell }^{\ell }i\,m\,a_{\ell
m}\,e^{im\varphi _{x}}\sqrt{\frac{2\ell +1}{4\pi }}\sqrt{\frac{(\ell -m)!}{%
(\ell +m)!}}P_{\ell m}(0),
\end{equation*}%
whence the identity \eqref{ident} requires, for all $m$ such that $\ell +m$
is even (which ensures that $P_{\ell m}(0)\neq 0$),
\begin{equation*}
e^{im\varphi _{y}}=\alpha e^{im\varphi _{x}},
\end{equation*}%
as before impossible, unless $\varphi _{x}=\varphi _{y}.$ Hence the
statement of the Proposition follows.
\end{proof}

\section{Kac-Rice Formula and $L^{2}$-Convergence \label{Kac-Rice}}

We can now build an approximating sequence of functions (${\mathcal{N}}%
_{\ell ,\varepsilon }^{c}(I),$ say), and establish their convergence both in
the $\omega $-almost sure and in the $L^{2}(\Omega )$ sense to ${\mathcal{N}}%
_{\ell }^{c}(I)$. More precisely, let $\delta _{\varepsilon }:\mathbb{R}%
^{2}\rightarrow \mathbb{R}$ be such that
\begin{equation*}
\delta _{\varepsilon }(z):=\frac{1}{\varepsilon ^{2}}\mathbb{I}_{\{z\in
\lbrack -\varepsilon /2,\varepsilon /2]\}},
\end{equation*}%
and define the approximating sequence
\begin{equation*}
{\mathcal{N}}_{\ell ,\varepsilon }^{c}(I):=\int_{\mathbb{S}^{2}}|\text{det(}%
\nabla ^{2}f_{\ell }(x))|\mathbb{I}_{\left\{ f_{\ell }(x)\in I\right\}
}\delta _{\varepsilon }(\nabla f_{\ell }(x))dx;
\end{equation*}%
it is possible to prove the almost sure and $L^{2}(\Omega )$ convergence of $%
{\mathcal{N}}_{\ell ,\varepsilon }^{c}(I)$ to ${\mathcal{N}}_{\ell }^{c}(I)$
as $\varepsilon \rightarrow 0$:

\begin{lemma}
\label{XXe} For every $\ell \in \mathbb{N}$, we have
\begin{equation}
{\mathcal{N}}_{\ell }^{c}(I)=\lim_{\varepsilon \rightarrow 0}{\mathcal{N}}%
_{\ell ,\varepsilon }^{c}(I),  \label{Xe1}
\end{equation}%
where the convergence holds both $\omega $-a.s. and in $L^{2}(\Omega )$.
\end{lemma}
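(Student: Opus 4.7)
The plan is to prove the $\omega$-a.s. convergence first by a deterministic Morse-theoretic argument, and then upgrade to $L^2(\Omega)$ convergence via a uniform second-moment bound and Vitali's theorem. The crucial inputs that are already available are the non-degeneracy of the covariance of $\nabla f_\ell(x)$ (giving absolute continuity of the gradient's law at each fixed point) and Proposition~\ref{Thecondition} (giving non-singularity of the joint covariance $A_\ell(x,y)$ away from the diagonal and antipodal locus).

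For the almost sure part, I would first invoke Bulinskaya's lemma together with the non-degeneracy of the distribution of $(f_\ell(x),\nabla f_\ell(x))$ at each fixed $x$ to conclude that, almost surely, $f_\ell$ is a Morse function with all critical values lying in the interior of $I$ or in its complement. Since $f_\ell$ is a trigonometric polynomial of fixed degree $\ell$, there are only finitely many critical points $\{x_1,\dots,x_K\}$, each with invertible Hessian. The inverse function theorem yields disjoint open neighborhoods $U_j\ni x_j$ on which $\nabla f_\ell$ is a diffeomorphism onto a neighborhood of $0\in\mathbb{R}^2$, while on $\mathbb{S}^2\setminus\bigcup_j U_j$ one has $\|\nabla f_\ell\|\ge \eta(\omega)>0$. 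For all $\varepsilon<2\eta$ the factor $\delta_\varepsilon(\nabla f_\ell(x))$ vanishes outside $\bigcup_j U_j$, and on each $U_j$ the change of variables $u=\nabla f_\ell(x)$, with Jacobian $|\det \nabla^2 f_\ell(x)|$, converts the integral over $U_j$ into
\[
\mathbb{I}_{\{f_\ell(x_j)\in I\}}\int_{\nabla f_\ell(U_j)}\delta_\varepsilon(u)\,du \;\xrightarrow[\varepsilon\to 0]{}\; \mathbb{I}_{\{f_\ell(x_j)\in I\}},
\]
and summing in $j$ gives $\mathcal{N}_{\ell,\varepsilon}^c(I)\to \mathcal{N}_\ell^c(I)$.

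For the $L^2(\Omega)$ part I would establish $\sup_\varepsilon \mathbb{E}[\mathcal{N}_{\ell,\varepsilon}^c(I)^2]<\infty$; combined with the a.s. convergence this yields $L^2$-convergence by Vitali's theorem. Squaring and applying Fubini gives
\[
\mathbb{E}[\mathcal{N}_{\ell,\varepsilon}^c(I)^2]=\int_{\mathbb{S}^2\times\mathbb{S}^2} K_\varepsilon(x,y)\,dx\,dy,
\]
where $K_\varepsilon(x,y)$ is the expected product of the two Jacobians, the threshold indicators and the two $\delta_\varepsilon$ factors. Conditioning on $(\nabla f_\ell(x),\nabla f_\ell(y))$ and writing the joint Gaussian density $\phi_{A_\ell(x,y)}(u,v)$, one sees that $K_\varepsilon(x,y)$ is an average of $\phi_{A_\ell(x,y)}(u,v)$ times a conditional expectation of Hessian determinants, over $(u,v)$ in an $\varepsilon$-neighbourhood of the origin. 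Proposition~\ref{Thecondition} ensures that $\phi_{A_\ell(x,y)}(0,0)$ is well defined off the diagonal and antipodal pair, and the conditional second moments of the Hessian entries are bounded uniformly on $\mathbb{S}^2\times\mathbb{S}^2$.

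The main obstacle is the integrability of this kernel as $(x,y)$ approaches the diagonal or the antipodal locus, where $\det A_\ell(x,y)\to 0$ and $\phi_{A_\ell(x,y)}(0,0)$ blows up. I would handle this by the standard isotropy reduction (restricting to pairs lying on the equator, exactly as in the proof of Proposition~\ref{Thecondition}) and a careful Taylor expansion of $a_\ell(x,y)$ in the spherical distance $\theta_{xy}$, using the known asymptotics of Legendre polynomials and their derivatives to show that the singularity is integrable, uniformly in $\varepsilon$. Analogous diagonal estimates have been carried out for related Kac--Rice expansions on $\mathbb{S}^2$ and on the torus in the references cited (e.g.\ \cite{CMW}, \cite{CM2018}, \cite{MRW}), and I would adapt them to the present setting. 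Once this $\varepsilon$-uniform second-moment bound is in place, the $L^2(\Omega)$ convergence follows at once from the $\omega$-a.s.\ convergence already proved.
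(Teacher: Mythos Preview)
Your almost-sure argument is correct and is essentially a spelled-out version of what the paper does by citing Theorem~11.2.3 in \cite{adlertaylor}: Gaussian spherical harmonics are a.s.\ Morse, critical values a.s.\ avoid $\partial I$, and then the local change of variables $u=\nabla f_\ell(x)$ gives the pointwise limit.

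The $L^2(\Omega)$ step, however, contains a genuine gap. You claim that a uniform bound $\sup_{\varepsilon}\mathbb{E}\big[\mathcal{N}_{\ell,\varepsilon}^{c}(I)^2\big]<\infty$ together with a.s.\ convergence yields $L^2$-convergence ``by Vitali''. It does not: Vitali for $L^2$ requires uniform integrability of $\{\mathcal{N}_{\ell,\varepsilon}^{c}(I)^2\}$, and a mere $L^2$ bound only gives uniform integrability of $\{\mathcal{N}_{\ell,\varepsilon}^{c}(I)\}$, hence $L^1$-convergence at best. A standard counterexample is $X_\varepsilon=\varepsilon^{-1/2}\mathbb{I}_{[0,\varepsilon]}$ on $[0,1]$: $X_\varepsilon\to 0$ a.s., $\mathbb{E}[X_\varepsilon^2]\equiv 1$, but $X_\varepsilon\not\to 0$ in $L^2$. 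To close the argument you need either a uniform $L^{2+\delta}$ bound, or---what the paper actually does---convergence of the second moments $\mathbb{E}\big[\mathcal{N}_{\ell,\varepsilon}^{c}(I)^2\big]\to \mathbb{E}\big[\mathcal{N}_{\ell}^{c}(I)^2\big]$, which combined with a.s.\ convergence does imply $L^2$-convergence.

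The paper obtains this convergence of second moments by a route different from your direct kernel bound. Via Federer's coarea formula it writes $\mathcal{N}_{\ell,\varepsilon}^{c}(I)=\int_{\mathbb{R}^2}\mathcal{N}_\ell(u;I)\,\delta_\varepsilon(u)\,du$, uses Fatou below and Jensen above to sandwich $\mathbb{E}\big[\mathcal{N}_{\ell,\varepsilon}^{c}(I)^2\big]$ between $\mathbb{E}\big[\mathcal{N}_{\ell}^{c}(I)^2\big]$ and $\int \mathbb{E}[\mathcal{N}_\ell(u;I)^2]\,\delta_\varepsilon(u)\,du$, and then reduces everything to continuity of $u\mapsto \mathbb{E}[\mathcal{N}_\ell(u;I)^2]$ at $u=0$. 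That continuity is obtained by dominated convergence on the two-point correlation function $K_\ell(x,y;u)$, using exactly the near-diagonal estimates from \cite{CMW} you allude to (the density blows up like $d_{\mathbb{S}^2}(x,y)^{-2}$ while the conditional Hessian determinants vanish like $d_{\mathbb{S}^2}(x,y)^{2}$, so $K_\ell$ stays bounded). Your programme of bounding $K_\varepsilon(x,y)$ uniformly in $\varepsilon$ is close in spirit, and the same near-diagonal cancellation is the key input; but as written your logical conclusion from that bound is too weak.
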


\begin{proof}
\emph{Step 1: Almost sure Convergence}. To establish the convergence $\omega
$-a.s., it is sufficient to refer to Theorem 11.2.3 in \cite{adlertaylor}.
Note that this Theorem refers to Euclidean domains, but the extension to the
spherical case can be obtained by simply referring to local maps that form
an atlas on the sphere, along the lines given for general manifolds again in
\cite{adlertaylor}, Theorem 12.1.1. Note also that these results are given
for deterministic vector functions which are assumed to be continuous and
nondegenerate (Morse), i.e. such that the Hessian does not vanish when the
gradient does; these conditions are satisfied $\omega $-a.s. for Gaussian
spherical harmonics.

\emph{Step 2: The Exact Kac-Rice Formula holds}. The fact that $\mathbb{E}%
\left[ {\mathcal{N}}_{\ell }^{c}(I)^{2}\right] <\infty $ has been shown in
\cite{CMW}, exploiting an approximate Kac-Rice formula. It is possible here
to give a stronger results; in particular, using Theorem 6.3 in \cite%
{azaiswschebor} and Proposition \ref{Thecondition}, where we have shown that
the determinant of the $4\times 4$ covariance matrix $A_{\ell }(x,y)$ of
first-order derivatives is strictly positive for every pair $(x,y)\in
\mathbb{S}^{2}\times \mathbb{S}^{2}$, we have also that the exact Kac-Rice
formula holds and hence
\begin{equation*}
\mathbb{E}\left[ {\mathcal{N}}_{\ell }^{c}(I)({\mathcal{N}}_{\ell }^{c}(I)-1)%
\right]
\end{equation*}%
\begin{equation*}
=\frac{1}{(2\pi )^{2}}\int_{\mathbb{S}^{2}\times \mathbb{S}^{2}}\mathbb{E}%
\left[ \left. |\det \left( \nabla ^{2}f_{\ell }(x)\right) |\;|\det \left(
\nabla ^{2}f_{\ell }(y)\right) |\right\vert \nabla f_{\ell }(x)=\nabla
f_{\ell }(y)=0\right] \mathbb{I}_{\{f_{\ell }(x)\in I\}}\mathbb{I}%
_{\{f_{\ell }(y)\in I\}}\frac{dxdy}{\sqrt{\det (A_{\ell }(x,y))}}.
\end{equation*}%
\emph{Step 3: The }$L^{2}$\emph{Convergence of the Approximating Sequence}.
Because we have the almost sure convergence ${\mathcal{N}}_{\ell
}^{c}(I)=\lim_{\varepsilon \rightarrow 0}{\mathcal{N}}_{\ell ,\varepsilon
}^{c}(I)$, to prove that $\mathbb{E}\left[ {\mathcal{N}}_{\ell ,\varepsilon
}^{c}(I)-{\mathcal{N}}_{\ell }^{c}(I)\right] ^{2}\rightarrow 0$ as $%
\varepsilon \rightarrow 0$ it is enough to show that
\begin{equation}
\mathbb{E}\left[ \left\{ {\mathcal{N}}_{\ell ,\varepsilon }^{c}(I)\right\}
^{2}\right] \rightarrow \mathbb{E}\left[ \left\{ {\mathcal{N}}_{\ell
}^{c}(I)\right\} ^{2}\right] \text{ as }\varepsilon \rightarrow 0.
\label{result}
\end{equation}%
Indeed we have
\begin{equation*}
\mathbb{E}\left[ \{{\mathcal{N}}_{\ell ,\varepsilon }^{c}(I)-{\mathcal{N}}%
_{\ell }^{c}(I)\}^{2}\right] =\mathbb{E}\left[ \{{\mathcal{N}}_{\ell
,\varepsilon }^{c}(I)\}^{2}\right] +\mathbb{E}\left[ \{{\mathcal{N}}_{\ell
}^{c}(I)\}^{2}\right] -2\mathbb{E}\left[ {\mathcal{N}}_{\ell ,\varepsilon
}^{c}(I){\mathcal{N}}_{\ell }^{c}(I)\right] ,
\end{equation*}%
and $\mathbb{E}\left[ {\mathcal{N}}_{\ell ,\varepsilon }^{c}(I){\mathcal{N}}%
_{\ell }^{c}(I)\right] \rightarrow \mathbb{E}\left[ \{{\mathcal{N}}_{\ell
}^{c}(I)\}^{2}\right] $ by Cauchy-Schwartz and Dominated Convergence. Recall
first that by Federer's coarea formula%
\begin{align*}
{\mathcal{N}}_{\ell ,\varepsilon }^{c}(I)& =\int_{\mathbb{S}^{2}}|\det
\left( \nabla ^{2}f_{\ell }(x)\right) |\,\delta _{\varepsilon }(\nabla
f_{\ell }(x))\,\mathbb{I}_{\{f_{\ell }(x)\in I\}}dx \\
& =\int_{\mathbb{R}^{2}}\mathcal{N}_{\ell }(u_{1},u_{2};I)\delta
_{\varepsilon }(u_{1},u_{2})du_{1}du_{2},
\end{align*}%
where%
\begin{equation*}
\mathcal{N}_{\ell }(u_{1},u_{2};I)=\text{card}\left\{ x\in \mathbb{S}%
^{2}:\nabla f_{\ell }(x)=(u_{1},u_{2}),\;f_{\ell }(x)\in I\right\} .
\end{equation*}%
Now%
\begin{equation*}
\mathbb{E}\left[ \left\{ {\mathcal{N}}_{\ell }^{c}(I)\right\} ^{2}\right]
\leq \lim_{\varepsilon \rightarrow 0}\mathbb{E}\left[ \left\{ {\mathcal{N}}%
_{\ell ,\varepsilon }^{c}(I)\right\} ^{2}\right] \text{ (by Fatou's Lemma) }
\end{equation*}%
\begin{align*}
& =\liminf_{\varepsilon \rightarrow 0}\mathbb{E}\left[ \left\{ \int_{\mathbb{%
S}^{2}}\left\vert \det \left( \nabla ^{2}f_{\ell }(x)\right) \right\vert
\delta _{\varepsilon }(\nabla f_{\ell }(x))\mathbb{I}_{\{f_{\ell }(x)\in
I\}}dx\right\} ^{2}\right] \text{ (by definition of }{\mathcal{N}}_{\ell
,\varepsilon }^{c}(I)) \\
& =\liminf_{\varepsilon \rightarrow 0}\mathbb{E}\left[ \left\{ \int_{\mathbb{%
R}^{2}}\mathcal{N}_{\ell }(u_{1},u_{2};I)\delta _{\varepsilon
}(u_{1},u_{2})du_{1}du_{2}\right\} ^{2}\right] \text{ (by Federer's coarea
formula}) \\
& \leq \limsup_{\varepsilon \rightarrow 0}\mathbb{E}\left[ \left\{ \int_{%
\mathbb{R}^{2}}\mathcal{N}_{\ell }(u_{1},u_{2};I)\delta _{\varepsilon
}(u_{1},u_{2})du_{1}du_{2}\right\} ^{2}\right] \text{ (obvious)} \\
& \leq \limsup_{\varepsilon \rightarrow 0}\int_{\mathbb{R}^{2}}\mathbb{E}%
\left[ \left\{ \mathcal{N}_{\ell }(u_{1},u_{2};I)\right\} ^{2}\right] \delta
_{\varepsilon }(u_{1},u_{2})du_{1}du_{2}\text{ (by Jensen's inequality)}.
\end{align*}%
Clearly, if we will show that the application $\mathcal{A}:\mathbb{R}%
^{2}\rightarrow \mathbb{R}$ defined by $\mathcal{A}(u_{1},u_{2}):=\mathbb{E}%
\left[ \left\{ \mathcal{N}_{\ell }(u_{1},u_{2};I)\right\} ^{2}\right] $ is
continuous in a neighbourhood of the origin, we will be able to conclude that%
\begin{equation*}
\limsup_{\varepsilon \rightarrow 0}\int_{\mathbb{R}^{2}}\mathbb{E}\left[
\left\{ \mathcal{N}_{\ell }(u_{1},u_{2};I)\right\} ^{2}\right] \delta
_{\varepsilon }(u_{1},u_{2})du_{1}du_{2}=\mathbb{E}\left[ \left\{ {\mathcal{N%
}}_{\ell }^{c}(I)\right\} ^{2}\right] ,
\end{equation*}%
and hence the result. Note however that%
\begin{equation*}
\mathbb{E}\left[ \left\{ \mathcal{N}_{\ell }(u_{1},u_{2};I)\right\} ^{2}%
\right] =\int_{\mathbb{S}^{2}\times \mathbb{S}^{2}}K_{\ell
}(x,y;u_{1},u_{2})dxdy
\end{equation*}%
where we have introduced the (generalized) two-point correlation function%
\begin{equation*}
K_{\ell }(x,y;u_{1},u_{2}):=\mathbb{E}\left[ \left. \left\vert \det \left(
\nabla ^{2}f_{\ell }(x)\right) \right\vert \left\vert \det \left( \nabla
^{2}f_{\ell }(y)\right) \right\vert \right\vert \nabla f_{\ell }(x)=\nabla
f_{\ell }(y)=(u_{1},u_{2})\right] \;p_{(\nabla f_{\ell }(x),\nabla f_{\ell
}(y))}(u_{1},u_{2}).
\end{equation*}%
Because $K_{\ell }(x,y;u_{1},u_{2})$ is a linear combination of expectations
of Gaussian moments, it is clearly continuous around $(u_{1},u_{2})=(0,0)$,
where both its mean and variance are limited for every $(x\neq y$). Hence if
we can prove that $K_{\ell }(x,y;u_{1},u_{2})$ is bounded, by the Lebesgue
Dominated Convergence Theorem we shall have that%
\begin{align*}
& \lim_{u_{1},u_{2}\rightarrow 0}\mathbb{E}\left[ \left\{ \mathcal{N}_{\ell
}(u_{1},u_{2};I)\right\} ^{2}\right] -\mathbb{E}\left[ \left\{ \mathcal{N}%
_{\ell }(0,0;I)\right\} ^{2}\right] \\
& =\lim_{u_{1},u_{2}\rightarrow 0}\int_{\mathbb{S}^{2}\times \mathbb{S}%
^{2}}\left\{ K_{\ell }(x,y;u_{1},u_{2})-K_{\ell }(x,y;0,0)\right\} dxdy=0,
\end{align*}%
whence continuity follows. To prove that $K_{\ell }(x,y;u_{1},u_{2})$ is
bounded, we just need to generalize slightly Lemmas 4.5 and 4.6 from \cite%
{CMW}; in particular, we have that
\begin{equation}
p_{(\nabla f_{\ell }(x),\nabla f_{\ell }(y))}(u_{1},u_{2})\leq \left\{ \det
(A_{\ell }(x,y))\right\} ^{-1/2}\leq \frac{C}{d_{\mathbb{S}^{2}}^{2}(x,y)},%
\text{ for some }C>0,\text{ for all }(x,y)\in \mathbb{S}^{2}\times \mathbb{S}%
^{2};  \label{lemma4.5}
\end{equation}%
in fact, this result was shown to hold in Lemma 4.5 of \cite{CMW}, but only
for $d_{\mathbb{S}^{2}}(x,y)<c,$ some $c>0;$ however for $d_{\mathbb{S}%
^{2}}(x,y)\geq c$ we can use Proposition \ref{Thecondition} to conclude that
the determinant admits a non-zero minimum, because it is a strictly positive
polynomial function on a compact set. Likewise, to bound
\begin{equation*}
\mathbb{E}\left[ \left. \left\vert \det \left( \nabla ^{2}f_{\ell
}(x)\right) \right\vert \left\vert \det \left( \nabla ^{2}f_{\ell
}(y)\right) \right\vert \right\vert \nabla f_{\ell }(x)=\nabla f_{\ell
}(y)=(u_{1},u_{2})\right]
\end{equation*}%
we can argue exactly as in the proof of Lemma 4.6 in \cite{CMW}, the only
difference being that the elements that appear in the Hessian matrices $%
\left\{ \nabla ^{2}f_{\ell }(x)\right\} ,\left\{ \nabla ^{2}f_{\ell
}(y)\right\} $ have non-zero means; more precisely, let us write $%
(X_{1},X_{2},X_{3},Y_{1},Y_{2},Y_{3})$ for the six-dimensional vectors of
second-order derivatives conditioned on $\nabla f_{\ell }(x)=\nabla f_{\ell
}(y)=(u_{1},u_{2})$: it is a Gaussian vector with covariance matrix and
expected value given respectively by
\begin{align*}
\Delta _{\ell }(x,y)& :=C_{\ell }(x,y)-B_{\ell }^{T}(x,y)A_{\ell
}^{-1}(x,y)B_{\ell }(x,y), \\
\mu _{\ell }(x,y)& =\left(
\begin{array}{c}
\mu _{1\ell }(x,y) \\
... \\
\mu _{6\ell }(x,y)%
\end{array}%
\right) :=B_{\ell }^{T}(x,y)A_{\ell }^{-1}(x,y)\left(
\begin{array}{c}
u_{1} \\
u_{2} \\
u_{1} \\
u_{2}%
\end{array}%
\right) .
\end{align*}%
Introduce also the centred six-dimensional vector%
\begin{equation*}
(\widetilde{U}_{1},\widetilde{U}_{2},\widetilde{U}_{3},\widetilde{U}_{4},%
\widetilde{U}_{5},\widetilde{U}%
_{6})^{T}=(U_{1},U_{2},U_{3},U_{4},U_{5},U_{6})-\mu _{\ell }(x,y)\text{ };
\end{equation*}%
Note that the elements of the vector $(\widetilde{U}_{1},\widetilde{U}_{2},%
\widetilde{U}_{3},\widetilde{U}_{4},\widetilde{U}_{5},\widetilde{U}_{6})$
can be interpreted as the second-order derivatives conditioned on the
gradient being equal to zero. We show in Lemma \ref{expectedvalue} in
Appendix C that for any fixed $\ell $%
\begin{equation*}
\mu _{k\ell }(x,y)=O(d_{\mathbb{S}^{2}}(x,y))\text{ , }k=1,...,6,
\end{equation*}%
and it is then readily verified that%
\begin{eqnarray*}
&&\mathbb{E}\left[ \left. \left\vert \det \left( \nabla ^{2}f_{\ell
}(x)\right) \right\vert \left\vert \det \left( \nabla ^{2}f_{\ell
}(y)\right) \right\vert \right\vert \nabla f_{\ell }(x)=\nabla f_{\ell
}(y)=(u_{1},u_{2})\right] \\
&=&\mathbb{E}\left[ \left\vert \det \left(
\begin{array}{cc}
\widetilde{U}_{1}+\mu _{1\ell }(x,y) & \widetilde{U}_{2}+\mu _{2\ell }(x,y)
\\
\widetilde{U}_{2}+\mu _{2\ell }(x,y) & \widetilde{U}_{3}+\mu _{3\ell }(x,y)%
\end{array}%
\right) \right\vert \left\vert \det \left(
\begin{array}{cc}
\widetilde{U}_{4}+\mu _{4\ell }(x,y) & \widetilde{U}_{5}+\mu _{5\ell }(x,y)
\\
\widetilde{U}_{5}+\mu _{5\ell }(x,y) & \widetilde{U}_{6}+\mu _{6\ell }(x,y)%
\end{array}%
\right) \right\vert \right] \\
&=&\mathbb{E}\left[ \left\vert \widetilde{U}_{1}\widetilde{U}_{3}\widetilde{U%
}_{4}\widetilde{U}_{6}\right\vert +\left\vert \widetilde{U}_{1}\widetilde{U}%
_{3}\widetilde{U}_{5}^{2}\right\vert +\left\vert \widetilde{U}_{2}^{2}%
\widetilde{U}_{4}\widetilde{U}_{6}\right\vert +\left\vert \widetilde{U}%
_{2}^{2}\widetilde{U}_{5}^{2}\right\vert \right] +O(d_{\mathbb{S}%
^{2}}^{2}(x,y)) \\
&=&O(d_{\mathbb{S}^{2}}^{2}(x,y))\text{ ,}
\end{eqnarray*}%
where for the last two steps we have used Lemma 4.5 in \cite{CMW}, that
covers the behaviour of the moments for second order derivatives conditioned
on $\nabla f_{\ell }(x)=\nabla f_{\ell }(y)=(0,0)$. Hence for all $(x,y)\in
\mathbb{S}^{2}\times \mathbb{S}^{2}$, there exists some $C>0$, such that%
\begin{equation}
\mathbb{E}\left[ \left. \left\vert \det \left( \nabla ^{2}f_{\ell
}(x)\right) \right\vert \left\vert \det \left( \nabla ^{2}f_{\ell
}(y)\right) \right\vert \right\vert \nabla f_{\ell }(x)=\nabla f_{\ell
}(y)=(u_{1},u_{2})\right] \leq Cd_{\mathbb{S}^{2}}^{2}(x,y)\text{ , }
\label{lemma4.6}
\end{equation}%
and combining together \eqref{lemma4.5} and \eqref{lemma4.6}, we have that
\begin{equation*}
K_{\ell }(x,y;u_{1},u_{2})\leq const\text{ for all }(x,y)\in \mathbb{S}%
^{2}\times \mathbb{S}^{2},
\end{equation*}%
whence the result is established.
\end{proof}

\section{Proof of Theorem \protect\ref{th1} \label{Sec:Proof}}

\subsection{Cholesky decomposition}

Let us now write $\sigma _{\ell }(x)$ for the $5\times 5$ covariance matrix
of the Gaussian random vector $(\nabla f_{\ell }(x),vec(\nabla ^{2}f_{\ell
}(x)))$, i.e. the $5\times 1$ vector that includes the gradient and the
Hessian components of interest. We evaluate the covariance matrix $\sigma
_{\ell }(x)$ at any point $x\in \mathbb{S}^{2}$, and we write it in the
partitioned form
\begin{equation*}
\sigma _{\ell }(x)_{5\times 5}=\left(
\begin{array}{cc}
a_{\ell }(x) & b_{\ell }(x) \\
b_{\ell }^{T}(x) & c_{\ell }(x)%
\end{array}%
\right) ,
\end{equation*}%
where as before the superscript $T$ denotes transposition, and
\begin{equation*}
a_{\ell }(x)=\left(
\begin{array}{cc}
\frac{\lambda _{\ell }}{2} & 0 \\
0 & \frac{\lambda _{\ell }}{2}%
\end{array}%
\right) ,\hspace{1cm}b_{\ell }(x)=\left(
\begin{array}{ccc}
0 & 0 & 0 \\
0 & 0 & 0%
\end{array}%
\right) ,
\end{equation*}%
\begin{equation*}
c_{\ell }(x)=\frac{\lambda _{\ell }^{2}}{8}\left(
\begin{array}{ccc}
3-\frac{2}{\lambda _{\ell }} & 0 & 1+\frac{2}{\lambda _{\ell }} \\
0 & 1-\frac{2}{\lambda _{\ell }} & 0 \\
1+\frac{2}{\lambda _{\ell }} & 0 & 3-\frac{2}{\lambda _{\ell }}%
\end{array}%
\right) .
\end{equation*}%
\medskip

\noindent We follow here the same argument as in \cite{CM2018}; in
particular, we recall that the \emph{Cholesky decomposition} of a Hermitian
positive-definite matrix $A$ takes the form $A=\Lambda \Lambda ^{T},$ where $%
\Lambda $ is a lower triangular matrix with real and positive diagonal
entries, and $\Lambda^{T}$ denotes the conjugate transpose of $\Lambda $. It
is well-known that every Hermitian positive-definite matrix (and thus also
every real-valued symmetric positive-definite matrix) admits a unique
Cholesky decomposition.

By an explicit computation, it is then possible to show that the Cholesky
decomposition of $\sigma _{\ell }$ takes the form $\sigma _{\ell }=\Lambda
_{\ell }\Lambda _{\ell }^{T}$, where
\begin{equation*}
\Lambda _{\ell }=\left(
\begin{array}{ccccc}
\frac{\sqrt{\lambda }_{\ell }}{\sqrt{2}} & 0 & 0 & 0 & 0 \\
0 & \frac{\sqrt{\lambda }_{\ell }}{\sqrt{2}} & 0 & 0 & 0 \\
0 & 0 & \frac{\sqrt{\lambda _{\ell }}\sqrt{3\lambda _{\ell }-2}}{2\sqrt{2}}
& 0 & 0 \\
0 & 0 & 0 & \frac{\sqrt{\lambda _{\ell }}\sqrt{\lambda _{\ell }-2}}{2\sqrt{2}%
} & 0 \\
0 & 0 & \frac{\sqrt{\lambda _{\ell }}(\lambda _{\ell }+2)}{2\sqrt{2}\sqrt{%
3\lambda _{\ell }-2}} & 0 & \frac{\lambda _{\ell }\sqrt{{\lambda _{\ell }-2}}%
}{\sqrt{3\lambda _{\ell }-2}}%
\end{array}%
\right) =:\left(
\begin{array}{ccccc}
\lambda _{1} & 0 & 0 & 0 & 0 \\
0 & \lambda _{1} & 0 & 0 & 0 \\
0 & 0 & \lambda _{3} & 0 & 0 \\
0 & 0 & 0 & \lambda _{4} & 0 \\
0 & 0 & \lambda _{2} & 0 & \lambda _{5}%
\end{array}%
\right);
\end{equation*}%
in the last expression, for notational simplicity we have omitted the
dependence of the $\lambda _{i}$s on $\ell $. The matrix is block diagonal,
because under isotropy the gradient components are independent from the
Hessian when evaluated at the same point. We can hence define a $5$%
-dimensional standard Gaussian vector $%
Y(x)=(Y_{1}(x),Y_{2}(x),Y_{3}(x),Y_{4}(x),Y_{5}(x))$ with independent
components such that%
\begin{equation*}
(\nabla f_{\ell }(x),vec(\nabla ^{2}f_{\ell }(x)))=\Lambda _{\ell }Y(x)
\end{equation*}%
\begin{equation*}
=\left( \lambda _{1}Y_{1}(x),\lambda _{1}Y_{2}(x),\lambda
_{3}Y_{3}(x),\lambda _{4}Y_{4}(x),\lambda _{5}Y_{5}(x)+\lambda
_{2}Y_{3}(x)\right).
\end{equation*}%
So we have
\begin{align*}
{\mathcal{N}}_{\ell ,\varepsilon }^{c}(I)& =\int_{\mathbb{S}^{2}}|\text{det}%
H_{f_{\ell }}(x)|\mathbb{I}_{\left\{ (\partial _{11,x}f_{\ell }(x)+\partial
_{22,x}f_{\ell }(x))/\lambda _{\ell }\in I\right\} }\delta _{\varepsilon
}(\nabla f_{\ell }(x))dx \\
& =\int_{\mathbb{S}^{2}}|\lambda _{3}Y_{3}(x)(\lambda _{5}Y_{5}(x)+\lambda
_{2}Y_{3}(x))-(\lambda _{4}Y_{4}(x))^{2}|\mathbb{I}_{\{\frac{\lambda
_{2}+\lambda _{3}}{\lambda }Y_{3}+\frac{\lambda _{5}}{\lambda }Y_{5}\in
I\}}\;\delta _{\varepsilon }(\lambda _{1}Y_{1}(x),\lambda _{1}Y_{2}(x))dx \\
& =\frac{\lambda ^{2}}{\lambda _{1}^{2}}\int_{\mathbb{S}^{2}}\left\vert
\frac{\lambda _{3}\lambda _{5}}{\lambda ^{2}}Y_{3}(x)Y_{5}(x)+\frac{\lambda
_{2}\lambda _{3}}{\lambda ^{2}}Y_{3}^{2}(x)-\frac{\lambda _{4}^{2}}{\lambda
^{2}}Y_{4}^{2}(x)\right\vert \mathbb{I}_{\{\frac{\lambda _{2}+\lambda _{3}}{%
\lambda }Y_{3}+\frac{\lambda _{5}}{\lambda }Y_{5}\in I\}}\;\delta
_{\varepsilon }(Y_{1}(x),Y_{2}(x))dx.
\end{align*}

\subsection{Second order chaotic component}

Following the same argument as in \cite{CM2018}, it can be shown that the
second order chaotic component of the number of critical points is given by
\begin{equation*}
{\mathcal{N}}_{\ell ;I}^{c}[2]=\frac{\lambda ^{2}}{\lambda _{1}^{2}}\left[
\sum_{i<j}h_{ij}(\ell ;I)\int_{\mathbb{S}^{2}}Y_{i}(x)Y_{j}(x)dx+\frac{1}{2}%
\sum_{i=1}^{5}k_{i}(\ell ;I)\int_{\mathbb{S}^{2}}H_{2}(Y_{i}(x))dx\right] ,
\end{equation*}%
where
\begin{align}
& h_{ij}(\ell ;I)=\lim_{\varepsilon \rightarrow 0}\mathbb{E}\left[
\left\vert \frac{\lambda _{3}\lambda _{5}}{\lambda ^{2}}Y_{3}Y_{5}+\frac{%
\lambda _{2}\lambda _{3}}{\lambda ^{2}}Y_{3}^{2}-\frac{\lambda _{4}^{2}}{%
\lambda ^{2}}Y_{4}^{2}\right\vert Y_{i}Y_{j}\;\mathbb{I}_{\{\frac{\lambda
_{2}+\lambda _{3}}{\lambda }Y_{3}+\frac{\lambda _{5}}{\lambda }Y_{5}\in
I\}}\delta _{\varepsilon }(Y_{1},Y_{2})\right] ,  \label{abs1} \\
& k_{i}(\ell ;I)=\lim_{\varepsilon \rightarrow 0}\mathbb{E}\left[ \left\vert
\frac{\lambda _{3}\lambda _{5}}{\lambda ^{2}}Y_{3}Y_{5}+\frac{\lambda
_{2}\lambda _{3}}{\lambda ^{2}}Y_{3}^{2}-\frac{\lambda _{4}^{2}}{\lambda ^{2}%
}Y_{4}^{2}\right\vert H_{2}(Y_{i})\;\mathbb{I}_{\{\frac{\lambda _{2}+\lambda
_{3}}{\lambda }Y_{3}+\frac{\lambda _{5}}{\lambda }Y_{5}\in I\}}\delta
_{\varepsilon }(Y_{1},Y_{2})\right] .  \label{abs2}
\end{align}%
Note, however, that the computation of projection coefficients here is
different (and considerably more complicated) than in \cite{CM2018}, due to
the presence of the absolute value in the previous formulae (\ref{abs1},\ref%
{abs2}), which makes the evaluation of exact moments much more challenging.
In particular, the computation of the projection coefficients $%
h_{ij}(.,.),k_{i}(.,.)$ is collected in two Lemmas below.

\begin{lemma}
\label{proj=0}For every $I\subset \mathbb{R}$, it holds that $h_{1j}(\ell
;I),h_{2j}(\ell ;I)=0$, for $j=1,\dots 5$.
\end{lemma}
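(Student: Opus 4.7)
The statement follows from the independence structure built in by the Cholesky parametrization combined with elementary parity arguments. The plan is to exploit two facts: first, that in the integrand of \eqref{abs1} the variables $Y_1$ and $Y_2$ enter \emph{only} through the mollifier $\delta_\varepsilon(Y_1, Y_2)$ (the absolute value and the indicator are functions of $(Y_3, Y_4, Y_5)$ alone); second, that $(Y_1, Y_2)$ is independent of $(Y_3, Y_4, Y_5)$ by construction of $Y$.

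Writing $G(Y_3, Y_4, Y_5) := \bigl|\tfrac{\lambda_3 \lambda_5}{\lambda^2} Y_3 Y_5 + \tfrac{\lambda_2 \lambda_3}{\lambda^2} Y_3^2 - \tfrac{\lambda_4^2}{\lambda^2} Y_4^2\bigr|\,\mathbb{I}_{\{\tfrac{\lambda_2 + \lambda_3}{\lambda}Y_3 + \tfrac{\lambda_5}{\lambda}Y_5 \in I\}}$, I would use independence to factorize
\begin{equation*}
\mathbb{E}\bigl[G(Y_3, Y_4, Y_5)\, Y_i Y_j\, \delta_\varepsilon(Y_1, Y_2)\bigr] = \mathbb{E}\bigl[G(Y_3,Y_4,Y_5)\cdot M_j(Y_3,Y_4,Y_5)\bigr]\cdot \mathbb{E}\bigl[N_{ij}(Y_1,Y_2)\, \delta_\varepsilon(Y_1,Y_2)\bigr],
\end{equation*}
where the functions $M_j$ and $N_{ij}$ collect those factors of $Y_i Y_j$ belonging to the block $\{3,4,5\}$ and $\{1,2\}$ respectively. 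Since $G$ is controlled by a quadratic in the Gaussian entries times a bounded indicator, the first factor is always a finite moment of the standard normal law, so to conclude it suffices to show that the second factor vanishes in the limit $\varepsilon \to 0$.

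I would then split into three subcases, in all of which $i \in \{1, 2\}$. If $j \in \{3, 4, 5\}$, the second factor reads $\mathbb{E}[Y_i\, \delta_\varepsilon(Y_1, Y_2)]$, which equals $0$ for every $\varepsilon > 0$ because $\delta_\varepsilon$ is even in each coordinate while $Y_i \in \{Y_1, Y_2\}$ is odd in that same coordinate. If $(i, j) = (1, 2)$, the factor reads
\begin{equation*}
\mathbb{E}[Y_1 Y_2\, \delta_\varepsilon(Y_1, Y_2)] = \tfrac{1}{\varepsilon^2}\mathbb{E}\bigl[Y_1\mathbb{I}_{|Y_1| \le \varepsilon/2}\bigr]\cdot \mathbb{E}\bigl[Y_2\mathbb{I}_{|Y_2| \le \varepsilon/2}\bigr] = 0
\end{equation*}
again by parity. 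Finally, if $i = j \in \{1, 2\}$, the pointwise bound $Y_i^2 \le \varepsilon^2/4$ on the support of $\delta_\varepsilon$ yields $\bigl|\mathbb{E}[Y_i^2\, \delta_\varepsilon(Y_1, Y_2)]\bigr| = O(\varepsilon^2) \to 0$. Putting the three subcases together gives $h_{1j}(\ell;I) = h_{2j}(\ell;I) = 0$ for every $j = 1, \ldots, 5$.

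There is no genuine technical obstacle here: this is a bookkeeping statement that exactly captures the payoff of the Cholesky decomposition, namely that $Y_1, Y_2$ (the gradient directions) are independent of the Hessian directions and enter the Kac--Rice formula only through the singular factor $\delta_\varepsilon$. The real work is deferred to the subsequent lemma, where the remaining $h_{ij}$ for $i, j \in \{3, 4, 5\}$ and the coefficients $k_i(\ell; I)$ must be computed explicitly (despite the absolute value that makes the moments much less routine than in the companion paper \cite{CM2018}), so as to identify the dominant term and recover the expression $\tfrac{1}{2}\bigl[\int_I p_3^c(t)\,dt\bigr]\int_{\mathbb{S}^2} H_2(f_\ell)\,dx$ asserted in Theorem~\ref{th1}.
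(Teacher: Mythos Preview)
Your proof is correct and takes essentially the same approach as the paper: factor via the independence of $(Y_1,Y_2)$ from $(Y_3,Y_4,Y_5)$, then use the evenness of $\delta_\varepsilon$ to kill the odd factor $Y_i$ with $i\in\{1,2\}$. The paper compresses this into the single observation $\lim_{\varepsilon\to 0}\mathbb{E}[H_1(Y)\delta_\varepsilon(Y)]=0$, which is exactly your parity argument; your third subcase $i=j\in\{1,2\}$ is unnecessary here since the $h_{ij}$ are only summed over $i<j$ (the diagonal terms are the $k_i$).
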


\begin{proof}
The statement follows immediately from
\begin{equation}
\lim_{\varepsilon \rightarrow 0}\mathbb{E}[H_{1}(Y)\delta _{\varepsilon
}(Y)]=0.  \label{Hp}
\end{equation}
\end{proof}

\begin{lemma}
\label{proj=01} For every $I\subset \mathbb{R}$, it holds that $h_{34}(\ell
;I),h_{45}(\ell ;I)=O(\ell ^{-1})$.
\end{lemma}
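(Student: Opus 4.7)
The key observation is that in both $h_{34}$ and $h_{45}$, the variable $Y_4$ enters the integrand only through the term $Y_4^2$ inside the absolute value, accompanied by a single external factor $Y_4$; meanwhile neither the indicator $\mathbb{I}_{\{\frac{\lambda_2+\lambda_3}{\lambda}Y_3+\frac{\lambda_5}{\lambda}Y_5\in I\}}$, the regularizer $\delta_\varepsilon(Y_1,Y_2)$, nor the other external factor ($Y_3$ in the case of $h_{34}$, $Y_5$ in the case of $h_{45}$) involves $Y_4$. Since the components of the Cholesky decomposition $Y_1,\dots,Y_5$ are independent standard Gaussians, I would exploit this asymmetry by a conditioning/parity argument.

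Concretely, I would condition on $(Y_1,Y_2,Y_3,Y_5)$ and reduce the computation to the one-dimensional inner Gaussian expectation
\[
\int_{\mathbb{R}}\bigl| A(Y_3,Y_5) + B\,y_4^{2}\bigr|\, y_4\,\phi(y_4)\,dy_4,
\]
where $A(Y_3,Y_5)=\tfrac{\lambda_3\lambda_5}{\lambda^{2}}Y_3Y_5+\tfrac{\lambda_2\lambda_3}{\lambda^{2}}Y_3^{2}$ and $B=-\tfrac{\lambda_4^{2}}{\lambda^{2}}$ do not depend on $y_4$. The integrand is the product of an even function of $y_4$ (the absolute value) and an odd function ($y_4\,\phi(y_4)$), hence odd, so the integral vanishes pointwise in $(Y_3,Y_5)$. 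Taking the remaining outer expectation (which now absorbs only $Y_1,Y_2,Y_3,Y_5$-dependent quantities) and then the limit $\varepsilon\to 0$ yields that the whole expression is identically zero. Consequently $h_{34}(\ell;I)=h_{45}(\ell;I)=0$, which in particular is $O(\ell^{-1})$.

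I do not anticipate any serious obstacle: the argument is purely a matter of Gaussian symmetry and integrability. The only mildly delicate point is the justification of Fubini in the conditioning step and of the interchange with $\lim_{\varepsilon\to 0}$, but this is immediate because $|A+B Y_4^{2}|\cdot Y_4$ is a polynomial expression in jointly Gaussian variables, hence has finite moments of all orders, while $\mathbb{E}[\delta_\varepsilon(Y_1,Y_2)]\to \phi(0)^{2}=1/(2\pi)$. It is worth noting that this reasoning in fact gives the exact identity $h_{34}(\ell;I)=h_{45}(\ell;I)=0$, which is strictly stronger than the $O(\ell^{-1})$ bound stated.
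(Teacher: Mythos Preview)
Your argument is correct and in fact cleaner than the paper's. The core symmetry is the same in both proofs—the absolute value is an even function of the variable playing the role of $Y_4$, while the external factor is odd—but the paper arrives at it only after several intermediate steps: it first replaces the $\ell$-dependent ratios $\tfrac{\lambda_3\lambda_5}{\lambda^2}$, $\tfrac{\lambda_2\lambda_3}{\lambda^2}$, $\tfrac{\lambda_4^2}{\lambda^2}$, $\tfrac{\lambda_2+\lambda_3}{\lambda}$, $\tfrac{\lambda_5}{\lambda}$ by their high-energy limits (this is where the $O(\ell^{-1})$ comes from), then passes through two successive linear changes of variables $(Y_3,Y_4,Y_5)\mapsto(Z_1,Z_2,Z_3)\mapsto(W_1,W_2,W_3)$, conditions on $W_3$, and only at the very end invokes the parity $\mathbb{E}\bigl[|2t^2-X_1^2-X_2^2|\,X_2\bigr]=0$. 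Since in all of those transformations $Y_4$ is carried untouched to $Z_2=W_2=X_2$, the final symmetry is literally the one you exploit directly on $Y_4$.

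Your route is therefore strictly more efficient: by applying the odd/even argument in the original Cholesky coordinates you avoid the coefficient approximation altogether and obtain the exact identity $h_{34}(\ell;I)=h_{45}(\ell;I)=0$ for every $\ell$, not merely $O(\ell^{-1})$. The paper's detour is presumably motivated by consistency with later computations (the $(Z_1,Z_2,Z_3)$ and $(W_1,W_2,W_3)$ coordinates are reused in Lemma~\ref{M1=0}), but for the present lemma your shortcut is both shorter and sharper.
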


\begin{proof}
Note that
\begin{equation*}
\lim_{\varepsilon \rightarrow 0}\mathbb{E}[H_{0}(Y)\delta _{\varepsilon
}(Y)]=\frac{1}{\sqrt{2\pi }}.
\end{equation*}%
To prove that $h_{34}(\ell ;I)=O(\ell ^{-1})$ we first apply (\ref{Hp}) to
obtain
\begin{align*}
h_{34}(\ell ;I)& =\lim_{\varepsilon \rightarrow 0}\mathbb{E}\left[
\left\vert \frac{\lambda _{3}\lambda _{5}}{\lambda ^{2}}Y_{3}Y_{5}+\frac{%
\lambda _{2}\lambda _{3}}{\lambda ^{2}}Y_{3}^{2}-\frac{\lambda _{4}^{2}}{%
\lambda ^{2}}Y_{4}^{2}\right\vert Y_{4}Y_{3}\;\mathbb{I}_{\{\frac{\lambda
_{2}+\lambda _{3}}{\lambda }Y_{3}+\frac{\lambda _{5}}{\lambda }Y_{5}\in I\}}%
\right] \mathbb{E}[\delta _{\varepsilon }(Y_{1},Y_{2})] \\
& =\frac{1}{2\pi }\mathbb{E}\left[ \left\vert \frac{\lambda _{3}\lambda _{5}%
}{\lambda ^{2}}Y_{3}Y_{5}+\frac{\lambda _{2}\lambda _{3}}{\lambda ^{2}}%
Y_{3}^{2}-\frac{\lambda _{4}^{2}}{\lambda ^{2}}Y_{4}^{2}\right\vert
Y_{4}Y_{3}\;\mathbb{I}_{\{\frac{\lambda _{2}+\lambda _{3}}{\lambda }Y_{3}+%
\frac{\lambda _{5}}{\lambda }Y_{5}\in I\}}\right] \\
& =\frac{1}{2\pi }\mathbb{E}\left[ \left\vert \frac{1}{\sqrt{8}}Y_{3}Y_{5}+%
\frac{1}{8}Y_{3}^{2}-\frac{1}{8}Y_{4}^{2}\right\vert Y_{4}Y_{3}\;\mathbb{I}%
_{\{\frac{\sqrt{2}}{\sqrt{3}}Y_{3}+\frac{1}{\sqrt{3}}Y_{5}\in I\}}\right]
+O(\ell ^{-1}),
\end{align*}%
then we use the following transformation
\begin{equation*}
Y_{3}=\frac{1}{\sqrt{3}}Z_{1},\hspace{0.5cm}Y_{4}=Z_{2},\hspace{0.5cm}Y_{5}=%
\frac{\sqrt{3}}{\sqrt{8}}Z_{3}-\frac{1}{\sqrt{3}\sqrt{8}}Z_{1}
\end{equation*}%
where $Z=(Z_{1},Z_{2},Z_{3})$ is a centred jointly Gaussian random vector
with covariance matrix
\begin{equation*}
\left(
\begin{array}{ccc}
3 & 0 & 1 \\
0 & 1 & 0 \\
1 & 0 & 3%
\end{array}%
\right) .
\end{equation*}%
We can rewrite $h_{34}(\ell ;I)$ as follows
\begin{align*}
h_{34}(\ell ;I)& =\frac{1}{2\pi }\frac{1}{8\sqrt{3}}\mathbb{E}\left[
\left\vert Z_{1}Z_{3}-Z_{2}^{2}\right\vert Z_{1}Z_{2}\;\mathbb{I}_{\{\frac{1%
}{\sqrt{8}}(Z_{1}+Z_{3})\in I\}}\right] +O(\ell ^{-1}) \\
& =\frac{1}{2\pi }\frac{1}{8\sqrt{3}}\int_{I}dt\;\mathbb{E}\left[ \left\vert
Z_{1}Z_{3}-Z_{2}^{2}\right\vert Z_{1}Z_{2}\;\mathbb{I}_{\{Z_{1}+Z_{3}=t\sqrt{%
8}\}}\right] +O(\ell ^{-1}).
\end{align*}%
We introduce the transformation $W_{1}=Z_{1}$, $W_{2}=Z_{2}$, $%
W_{3}=Z_{1}+Z_{3}$ so that
\begin{align*}
\mathbb{E}\left[ \left\vert Z_{1}Z_{3}-Z_{2}^{2}\right\vert Z_{1}Z_{2}\;%
\mathbb{I}_{\{Z_{1}+Z_{3}=t\sqrt{8}\}}\right] & =\mathbb{E}\left[ \left\vert
W_{1}(W_{3}-W_{1})-W_{2}^{2}\right\vert W_{1}W_{2}\;\mathbb{I}_{\{W_{3}=t%
\sqrt{8}\}}\right] \\
& =\phi _{W_{3}}(t\sqrt{8})\mathbb{E}\left[ \left\vert
W_{1}(W_{3}-W_{1})-W_{2}^{2}\right\vert W_{1}W_{2}\Big|W_{3}=t\sqrt{8}\right]
,
\end{align*}%
where $\phi _{W_{3}}$ is the density function of $W_{3}$. In \cite{CMW} we
proved that the bivariate conditioned Gaussian vector $(W_{1},W_{2})|W_{3}=%
\sqrt{8}t$ is distributed as $(X_{1}+\sqrt{2}t,X_{2})$ where $(X_{1},X_{2})$
are independent standard Gaussian, and hence
\begin{equation*}
\mathbb{E}\left[ \left\vert W_{1}(W_{3}-W_{1})-W_{2}^{2}\right\vert
W_{1}W_{2}\Big|W_{3}=t\sqrt{8}\right] =\mathbb{E}\left[ \left\vert
2t^{2}-X_{1}^{2}-X_{2}^{2}\right\vert (X_{1}+\sqrt{2}t)X_{2}\right] .
\end{equation*}%
We finally note that by symmetry
\begin{equation*}
\mathbb{E}\left[ \left\vert 2t^{2}-X_{1}^{2}-X_{2}^{2}\right\vert X_{2}%
\right] =0\;\text{and}\;\mathbb{E}\left[ \left\vert
2t^{2}-X_{1}^{2}-X_{2}^{2}\right\vert X_{1}X_{2}\right] =0.
\end{equation*}%
In the same way we see that $h_{45}(\ell ;I)=O(\ell ^{-1})$.
\end{proof}

\noindent For convenience, we recall now in the following Lemma the results
in \cite{CM2018}, Proposition 6:

\begin{lemma}
\label{AB} We have
\begin{equation*}
\int_{\mathbb{S}^{2}}Y_{3}(x)Y_{5}(x)dx=4\pi \frac{\sqrt{2}}{3}\sum_{m=-\ell
}^{\ell }\{|a_{\ell m}|^{2}-1\}\left[ -\frac{1}{\ell }+\frac{3\,m}{\ell ^{2}}%
-\frac{2\,m^{3}}{\ell ^{4}}\right] +r_{0}(\ell ),
\end{equation*}
and moreover
\begin{align*}
\int_{\mathbb{S}^{2}}H_{2}(Y_{1}(x))dx&=4\pi \,\sum_{m=-\ell }^{\ell
}\{|a_{\ell m}|^{2}-1\}\left[ \frac{1}{\ell }-\frac{m}{\ell ^{2}}\right]
+r_{1}(\ell ), \\
\int_{\mathbb{S}^{2}}H_{2}(Y_{2}(x))dx &=4\pi \,\sum_{m=-\ell }^{\ell
}\{|a_{\ell m}|^{2}-1\}\frac{m}{\ell ^{2}} +r_{2}(\ell ), \\
\int_{\mathbb{S}^{2}}H_{2}(Y_{3}(x))dx&=4\pi \sum_{m=-\ell }^{\ell
}\{|a_{\ell m}|^{2}-1\}\left[ \frac{4}{3\ell }-\frac{2m}{\ell ^{2}}+\frac{%
2m^{3}}{3\ell ^{4}}\right] +r_{3}(\ell ), \\
\int_{\mathbb{S}^{2}}H_{2}(Y_{4}(x))dx&=4\pi \times 2\sum_{m=-\ell }^{\ell
}\{|a_{\ell m}|^{2}-1\}\left[ \frac{m}{\ell ^{2}}-\frac{m^{3}}{\ell ^{4}}%
\right] +r_{4}(\ell ), \\
\int_{\mathbb{S}^{2}}H_{2}(Y_{5}(x))dx&=4\pi \times \frac{1}{6}\sum_{m=-\ell
}^{\ell }\{|a_{\ell m}|^{2}-1\}\left[ \frac{1}{\ell }+\frac{8m^{3}}{\ell ^{4}%
}\right] +r_{5}(\ell ),
\end{align*}
where $\sqrt{\mathbb{E}\left[ r_{i}(\ell )\right] ^{2}}=O(\ell ^{-1})$, for
all $i=0,...,5$.
\end{lemma}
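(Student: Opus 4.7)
The identities of the lemma are recalled from \cite{CM2018}, Proposition 6, so our plan is to outline the computation rather than reprove it in full detail. The starting point is to invert the Cholesky factor $\Lambda_\ell$ so that each $Y_i(x)$ is written as an explicit linear combination of the (covariant) derivatives $\partial_a f_\ell$, $\partial_{ab} f_\ell$ evaluated at $x$, divided by the corresponding Cholesky entry $\lambda_i(\ell)$. Substituting the spherical harmonic expansion $f_\ell(x) = \frac{\sqrt{4\pi}}{\sqrt{2\ell+1}} \sum_m a_{\ell m} Y_{\ell m}(x)$ turns each $Y_i(x)$ into a linear combination of the coefficients $\{a_{\ell m}\}$ whose deterministic weights are specific derivatives of the $Y_{\ell m}$.

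For each target integral $\int_{\mathbb{S}^2} Y_3 Y_5 \, dx$ or $\int_{\mathbb{S}^2} H_2(Y_i)\, dx = \int Y_i^2\, dx - 4\pi$, I would then square this expansion and integrate. By the $L^2$-orthonormality of $\{Y_{\ell m}\}$, only the diagonal $m = m'$ terms survive, leaving sums of the form $\frac{4\pi}{(2\ell+1)\lambda_i(\ell)\lambda_j(\ell)} \sum_m |a_{\ell m}|^2 \, I_m^{(i,j)}(\ell)$, where $I_m^{(i,j)}(\ell) = \int_{\mathbb{S}^2} \overline{D_i Y_{\ell m}} \, D_j Y_{\ell m} \, dx$ for the relevant first- or second-order differential operators $D_i, D_j$. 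These latter integrals evaluate in closed form using the Laplace eigenvalue equation $\Delta_{\mathbb{S}^2} Y_{\ell m} = -\lambda_\ell Y_{\ell m}$, the identity $\partial_\varphi Y_{\ell m} = i m Y_{\ell m}$, the classical raising and lowering operators on associated Legendre functions, and repeated integration by parts on $\mathbb{S}^2$; each is a polynomial in $\lambda_\ell$ and $m^2$. Subtracting means via $\mathbb{E}|a_{\ell m}|^2 = 1$ and Taylor-expanding the prefactors together with the Cholesky ratios $\lambda_i^{-2}$ in powers of $1/\ell$ through order $\ell^{-4}$ produces the stated polynomials of the form $a/\ell + b\,m/\ell^2 + c\,m^3/\ell^4$.

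The main obstacle is the bookkeeping required to recover the $m^3/\ell^4$ terms in $\int Y_3 Y_5\, dx$, $\int H_2(Y_3)\, dx$, $\int H_2(Y_4)\, dx$, and $\int H_2(Y_5)\, dx$: the Cholesky normalizations involve ratios such as $\lambda_\ell/(3\lambda_\ell - 2)$ whose $1/\ell$ corrections must be combined consistently with the subleading pieces of $I_m^{(i,j)}(\ell)$, and one has to track precisely which combinations of $1$, $m^2$, $m^4$ survive at each order. Once this is done, the remainders $r_i(\ell)$ are centred independent sums $\sum_m (|a_{\ell m}|^2 - 1) e_m(\ell)$ whose coefficients satisfy $\sup_m |e_m(\ell)| = O(\ell^{-3/2})$ (from the next term in the $1/\ell$ expansion); using $\mathrm{Var}(|a_{\ell m}|^2) = O(1)$ uniformly in $m$ and independence across $m$, this yields $\mathbb{E}[r_i(\ell)^2] = O(\ell \cdot \ell^{-3}) = O(\ell^{-2})$ and hence $\sqrt{\mathbb{E}[r_i(\ell)^2]} = O(\ell^{-1})$, as claimed.
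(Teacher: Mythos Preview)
Your proposal correctly identifies that the paper does not prove this lemma but merely quotes it from \cite{CM2018}, Proposition~6; there is therefore no proof in the paper to compare against, and your outline of the underlying computation (inverting the Cholesky factor, expanding in the $Y_{\ell m}$ basis, exploiting $\partial_\varphi Y_{\ell m}=imY_{\ell m}$ and integration by parts to reduce the spherical integrals to polynomials in $\lambda_\ell$ and $m^2$, then Taylor-expanding the normalizing constants in $1/\ell$) is precisely the strategy carried out in that reference. The only minor imprecision is your claim that the $|a_{\ell m}|^2$ are independent across $m$: since $f_\ell$ is real one has $a_{\ell,-m}=(-1)^m\overline{a_{\ell m}}$, so the variance of $\sum_m(|a_{\ell m}|^2-1)e_m(\ell)$ picks up a harmless factor of two on the off-diagonal, which does not affect the $O(\ell^{-1})$ bound.
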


\noindent From Lemma \ref{AB} we deduce the following:

\begin{lemma}
\label{A34A45} We have that
\begin{equation*}
\int_{\mathbb{S}^{2}}Y_{3}(x)Y_{4}(x)dx=r_{6}(\ell ),\hspace{2cm}\int_{%
\mathbb{S}^{2}}Y_{4}(x)Y_{5}(x)dx=r_{7}(\ell ),
\end{equation*}%
where $\sqrt{\mathbb{E}[r_{i}(\ell )]^{2}}=O(1)$, for $i=6,7$.
\end{lemma}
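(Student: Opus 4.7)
The plan is to exploit a two-fold application of the Cauchy--Schwartz inequality together with Lemma \ref{AB}, thereby avoiding any direct analysis of the cross-covariances $\mathbb{E}[Y_{i}(x)Y_{j}(y)]$.

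First, I would apply the pointwise (in $\omega $) $L^{2}(\mathbb{S}^{2})$ Cauchy--Schwartz bound $(\int Y_{3}Y_{4}\,dx)^{2}\leq (\int Y_{3}^{2}\,dx)(\int Y_{4}^{2}\,dx)$, then take expectations and apply Cauchy--Schwartz in $L^{2}(\Omega )$ to obtain
\begin{equation*}
\mathbb{E}\left[ \left( \int_{\mathbb{S}^{2}}Y_{3}Y_{4}\,dx\right) ^{2}\right] \leq \sqrt{\mathbb{E}\left[ \left( \int_{\mathbb{S}^{2}}Y_{3}^{2}\,dx\right) ^{2}\right] \mathbb{E}\left[ \left( \int_{\mathbb{S}^{2}}Y_{4}^{2}\,dx\right) ^{2}\right] }.
\end{equation*}
Since by the Cholesky construction $Y_{i}(x)$ is standard Gaussian at every $x\in \mathbb{S}^{2}$, the identity $y^{2}=H_{2}(y)+1$ gives $\int_{\mathbb{S}^{2}}Y_{i}^{2}\,dx=4\pi +\int_{\mathbb{S}^{2}}H_{2}(Y_{i}(x))\,dx$. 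From Lemma \ref{AB}, the second summand for $i=3,4,5$ is an explicit linear combination of the independent centred variables $\{|a_{\ell m}|^{2}-1\}$ with coefficients of order $O(\ell ^{-1})$, plus a remainder of $L^{2}(\Omega )$-norm $O(\ell ^{-1})$; since there are $2\ell +1$ such variables, each of unit variance, the $L^{2}(\Omega )$-norm of $\int H_{2}(Y_{i})\,dx$ is $O(\ell ^{-1/2})$. Hence $\sqrt{\mathbb{E}[(\int Y_{i}^{2}\,dx)^{2}]}=4\pi +o(1)=O(1)$, and plugging back into the display above yields $\sqrt{\mathbb{E}[(\int Y_{3}Y_{4}\,dx)^{2}]}=O(1)$. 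The identical argument with $Y_{3}$ replaced by $Y_{5}$ settles the claim for $\int Y_{4}Y_{5}\,dx$.

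There is no real obstacle in this route; the only ingredient that must be in place is the pointwise unit-variance normalization of the $Y_{i}$, which is built into the Cholesky decomposition at the start of Section \ref{Sec:Proof}. The \emph{hard} path would have been the direct one: since $Y_{3}(x)Y_{4}(x)$ belongs to the second Wiener chaos $\mathcal{H}_{2}$ with zero mean (because $Y_{3}(x),Y_{4}(x)$ are independent at each $x$), the diagram formula represents the variance as $\int \int [\mathbb{E}[Y_{3}(x)Y_{3}(y)]\mathbb{E}[Y_{4}(x)Y_{4}(y)]+\mathbb{E}[Y_{3}(x)Y_{4}(y)]\mathbb{E}[Y_{4}(x)Y_{3}(y)]]\,dxdy$, and bounding this would require controlling the full family of cross-covariances involving fourth-order derivatives of $P_{\ell }$ (in the spirit of the Lemmas collected in the Appendix). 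The Cauchy--Schwartz detour sidesteps these technicalities, which is why the weak bound $O(1)$ — sufficient given that $h_{34}(\ell ;I),h_{45}(\ell ;I)=O(\ell ^{-1})$ by Lemma \ref{proj=01} — can be obtained in just a few lines.
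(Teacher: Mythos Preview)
Your argument is correct and follows essentially the same strategy as the paper: a Cauchy--Schwartz bound on the spherical integral together with the explicit expansions from Lemma~\ref{AB}. The only organizational difference is that the paper passes back to the unnormalized second-order derivatives and bounds $I_{11,12}(\ell)\leq\sqrt{I_{11,11}(\ell)\,I_{12,12}(\ell)}$ (and similarly $I_{12,22}$) before dividing by $\lambda_{3}\lambda_{4}\sim\ell^{4}$, whereas you work directly with the normalized $Y_{i}$ and the identity $\int Y_{i}^{2}\,dx=4\pi+\int H_{2}(Y_{i})\,dx$; your route is slightly more streamlined but mathematically equivalent.
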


\begin{proof}
Let, for $i,j,k,l=1,2$,
\begin{equation*}
I_{ij,kl}(\ell )=\int_{\mathbb{S}^{2}}\partial _{ij,x}f_{\ell }(x)\partial
_{kl,x}f_{\ell }(x)dx,
\end{equation*}%
and
\begin{equation*}
I_{00}(\ell )=\int_{\mathbb{S}^{2}}f_{\ell }^{2}(x)dx,\hspace{1cm}%
I_{0,22}(\ell )=\int_{\mathbb{S}^{2}}f_{\ell }(x)\partial _{22,x}f_{\ell
}(x)dx.
\end{equation*}%
We immediately see that
\begin{equation*}
\int_{\mathbb{S}^{2}}Y_{3}(x)Y_{4}(x)dx=\frac{1}{\lambda _{3}\lambda _{4}}%
\int_{\mathbb{S}^{2}}\partial _{11,x}f_{\ell }(x)\,\partial _{21,x}f_{\ell
}(x)\,dx=\frac{1}{\lambda _{3}\lambda _{4}}I_{11,12}(\ell ),
\end{equation*}%
and
\begin{align*}
\int_{\mathbb{S}^{2}}Y_{4}(x)Y_{5}(x)dx& =\frac{1}{\lambda _{4}\lambda _{5}}%
\int_{\mathbb{S}^{2}}\partial _{12,x}f_{\ell }(x)\left( \partial
_{22,x}f_{\ell }(x)-\frac{\lambda _{2}}{\lambda _{3}}\partial _{11,x}f_{\ell
}(x)\right) dx \\
& =\frac{1}{\lambda _{4}\lambda _{5}}I_{12,22}(\ell )-\frac{\lambda _{2}}{%
\lambda _{3}\lambda _{4}\lambda _{5}}I_{11,12}(\ell ).
\end{align*}%
Applying Cauchy-Schwarz we also have
\begin{equation*}
I_{11,12}(\ell )\leq \sqrt{I_{11,11}(\ell )\,I_{12,12}(\ell )},\hspace{1cm}%
I_{12,22}(\ell )\leq \sqrt{I_{12,12}(\ell )\,I_{22,22}(\ell )};
\end{equation*}%
where
\begin{equation*}
I_{11,11}(\ell )=\lambda _{\ell }^{2}I_{00}(\ell )+I_{22,22}(\ell )+2\lambda
_{\ell }I_{0,22}(\ell ).
\end{equation*}%
The results in Lemma \ref{AB}:
\begin{align*}
& I_{00}(\ell )=\frac{1}{2\ell +1}\sum_{m=-\ell }^{\ell }|a_{\ell m}|^{2}, \\
& I_{0,22}(\ell )=-a_{\ell 0}^{2}\frac{\ell }{2\ell +1}+\sum_{m>0}|a_{\ell
m}|^{2}(\frac{1}{2\ell +1}-m), \\
& I_{12,12}(\ell )=\sum_{m=-\ell }^{\ell }|a_{\ell m}|^{2}m\left\{ \frac{%
\lambda _{\ell }-1-m^{2}}{4}\right\} , \\
& I_{22,22}(\ell )=\frac{a_{\ell 0}^{2}}{2}\left( \ell ^{2}-\frac{\ell }{%
2\ell +1}\right) +\frac{1}{2}\sum_{m>0}|a_{\ell m}|^{2}\left\{ -\frac{%
4\lambda _{\ell }}{2\ell +1}+m+\lambda m+m^{3}\right\} .
\end{align*}%
immediately imply that
\begin{equation*}
\int_{\mathbb{S}^{2}}Y_{3}(x)Y_{4}(x)dx=r_{6}(\ell ),\hspace{1cm}\int_{%
\mathbb{S}^{2}}Y_{5}(x)Y_{4}(x)dx=r_{7}(\ell );
\end{equation*}%
where $\sqrt{\mathbb{E}[r_{i}(\ell )]^{2}}=O(1)$, for $i=6,7$.
\end{proof}

\noindent In view of Lemmas \ref{proj=0}, \ref{proj=01}, \ref{A34A45} we
have
\begin{equation}
{\mathcal{N}}_{\ell ;I}^{c}[2]=\frac{\lambda ^{2}}{\lambda _{1}^{2}}\left[
h_{35}(\ell ;I)\int_{\mathbb{S}^{2}}Y_{3}(x)Y_{5}(x)dx+\frac{1}{2}%
\sum_{i=1}^{5}k_{i}(\ell ;I)\int_{\mathbb{S}^{2}}H_{2}(Y_{i}(x))dx\right]
+R_{3}(\ell ),  \label{15.58}
\end{equation}%
where $\frac{\lambda ^{2}}{\lambda _{1}^{2}}=2\lambda _{\ell }$ and $\sqrt{%
\mathbb{E}[R_{3}(\ell )]^{2}}=O(\ell )$ uniformly over $I$. Our next step is
now to rewrite ${\mathcal{N}}_{\ell ;I}^{c}[2]$ rearranging the projection
coefficients to make their computations more manageable. Indeed, recalling
their definitions in \ref{abs1}, \ref{abs2} and exploting the analytic
expressions in Lemma \ref{AB}, some tedious but straightforward algebra
yields that%
\begin{equation*}
{\mathcal{N}}_{\ell ;I}^{c}[2]=2\lambda _{\ell }\frac{2}{\ell }%
\,\sum_{m=-\ell }^{\ell }\{|a_{\ell m}|^{2}-1\}\left[ M_{0}+\frac{m}{\ell }%
M_{1}+\frac{1}{3}\frac{m^{3}}{\ell ^{3}}M_{2}\right] +R_{3}(\ell )\text{ ,}
\end{equation*}%
where
\begin{align*}
{M}_{0}(I)& :=\mathbb{E}\left[ \left\vert \frac{1}{\sqrt{8}}Y_{3}Y_{5}+\frac{%
1}{8}Y_{3}^{2}-\frac{1}{8}Y_{4}^{2}\right\vert \left( -\frac{5}{4}+\frac{2}{3%
}Y_{3}^{2}-\frac{\sqrt{2}}{3}Y_{3}Y_{5}+\frac{1}{12}Y_{5}^{2}\right) \;%
\mathbb{I}_{\{\frac{\sqrt{2}}{\sqrt{3}}Y_{3}+\frac{1}{\sqrt{3}}Y_{5}\in I\}}%
\right] \text{ }, \\
{M}_{1}(I)& :=\mathbb{E}\left[ \left\vert \frac{1}{\sqrt{8}}Y_{3}Y_{5}+\frac{%
1}{8}Y_{3}^{2}-\frac{1}{8}Y_{4}^{2}\right\vert \left( -Y_{3}^{2}+Y_{4}^{2}+%
\sqrt{2}Y_{3}Y_{5}\right) \;\mathbb{I}_{\{\frac{\sqrt{2}}{\sqrt{3}}Y_{3}+%
\frac{1}{\sqrt{3}}Y_{5}\in I\}}\right] \text{ }, \\
{M}_{2}(I)& :=\mathbb{E}\left[ \left\vert \frac{1}{\sqrt{8}}Y_{3}Y_{5}+\frac{%
1}{8}Y_{3}^{2}-\frac{1}{8}Y_{4}^{2}\right\vert \left( Y_{3}^{2}-3Y_{4}^{2}-2%
\sqrt{2}Y_{3}Y_{5}+2Y_{5}^{2}\right) \;\mathbb{I}_{\{\frac{\sqrt{2}}{\sqrt{3}%
}Y_{3}+\frac{1}{\sqrt{3}}Y_{5}\in I\}}\right] \text{ }.
\end{align*}%
\noindent It is then possible to prove that:

\begin{lemma}
\label{M1=0} For all $I\subset \mathbb{R},$ we have $M_{1}(I)=M_{2}(I)=0$
and
\begin{equation*}
M_{0}(I)=\frac{1}{8}\int_{I}p_{3}^{c}(t)dt\text{ ,}
\end{equation*}%
where $p_{3}^{c}(.)$ is defined in (\ref{p3}).
\end{lemma}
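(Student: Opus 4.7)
The plan is to reduce all three expectations to one-dimensional Gaussian integrals via an orthogonal change of coordinates in $(Y_3, Y_5)$-space, to kill $M_1$ and $M_2$ by parity, and to compute $M_0$ via elementary moments of an $\mathrm{Exp}(1/2)$ variable (namely $R^2 = S^2 + Y_4^2$).

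First I would set $T := \frac{\sqrt{2}}{\sqrt{3}}\,Y_3 + \frac{1}{\sqrt{3}}\,Y_5$ and $S := \frac{1}{\sqrt{3}}\,Y_3 - \frac{\sqrt{2}}{\sqrt{3}}\,Y_5$, so that $(T,S,Y_4)$ is an i.i.d.\ triple of standard Gaussians and the indicator in \eqref{abs1}--\eqref{abs2} collapses to $\mathbb{I}_{\{T \in I\}}$. Substituting $Y_3 = \frac{\sqrt{2}}{\sqrt{3}}T + \frac{1}{\sqrt{3}}S$ and $Y_5 = \frac{1}{\sqrt{3}}T - \frac{\sqrt{2}}{\sqrt{3}}S$ into the quadratic form inside the absolute value produces the decisive simplification
\begin{equation*}
\tfrac{1}{\sqrt{8}}\,Y_3 Y_5 \;+\; \tfrac{1}{8}\,Y_3^2 \;-\; \tfrac{1}{8}\,Y_4^2 \;=\; \tfrac{1}{8}\bigl(2T^2 - S^2 - Y_4^2\bigr),
\end{equation*}
the $TS$ coefficient cancelling thanks to the precise ratio $(1/\sqrt{8},\,1/8,\,1/8)$ of the three coefficients. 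Conditioning on $T=t$ and integrating $\phi(t)\,dt$ over $I$ then recasts each expectation as a two-dimensional integral over $(S,Y_4)$ against the kernel $|2t^2 - S^2 - Y_4^2|$, which is even in $S$ and invariant under $S \leftrightarrow Y_4$.

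Next I would handle $M_1$ and $M_2$ by parity. Applying the same substitution in their polynomial factors (and again watching the quadratic-in-$T$ part cancel) shows that, conditional on $T=t$, these factors reduce respectively to $Y_4^2 - S^2 - \sqrt{2}\,t S$ and $3(S^2 - Y_4^2)$. The $tS$ piece is odd in $S$ and hence killed by the even kernel, while $S^2 - Y_4^2$ is antisymmetric under the swap $S \leftrightarrow Y_4$ against a symmetric kernel, so its contribution is zero as well. This yields $M_1(I) = M_2(I) = 0$ uniformly in $I$.

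For $M_0$, the polynomial becomes $-\tfrac{5}{4} + \tfrac{1}{4}t^2 + \tfrac{\sqrt{2}}{2}\,t S + \tfrac{1}{2}S^2$ after conditioning on $T=t$; the $tS$ term again drops by parity, and by radial symmetry $\mathbb{E}[S^2\,|2t^2 - R^2|] = \tfrac{1}{2}\mathbb{E}[R^2\,|2t^2 - R^2|]$ with $R^2 \sim \mathrm{Exp}(1/2)$. I would then compute $\mathbb{E}[|2t^2 - R^2|]$ and $\mathbb{E}[R^2\,|2t^2 - R^2|]$ by splitting the exponential integral at $R^2 = 2t^2$ and integrating by parts; both collapse to expressions of the form $\alpha(t^2) + \beta(t^2)\,e^{-t^2}$. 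Reassembling and multiplying by $\phi(t)$, the integrand becomes a linear combination of $(t^4 - 4t^2 + 1)\,e^{-t^2/2}$ and $(6t^2 - 2)\,e^{-3t^2/2}$, which matches $\tfrac{1}{8}\,p_3^c(t)$ after comparison with \eqref{p3}. The main obstacle will be the algebraic bookkeeping in the split-integral-with-absolute-value step and then making the resulting polynomial identity line up exactly with the explicit form of $p_3^c$; the clean reduction of the kernel in Step~1 is what turns this from painful into mechanical.
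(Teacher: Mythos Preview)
Your proposal is correct and follows essentially the same route as the paper: both reduce to the kernel $|2t^{2}-X_{1}^{2}-X_{2}^{2}|$ against i.i.d.\ standard Gaussians $(X_{1},X_{2})$ after conditioning on the indicator variable, and both kill $M_{1}$, $M_{2}$ via evenness in one coordinate and the swap symmetry $X_{1}\leftrightarrow X_{2}$. The only differences are cosmetic: the paper reaches this representation through a two-step change of variables $(Y_{3},Y_{4},Y_{5})\to(Z_{1},Z_{2},Z_{3})\to(W_{1},W_{2},W_{3})$ with correlated intermediate coordinates, whereas your single orthogonal rotation $(Y_{3},Y_{5})\to(T,S)$ gets there directly; and for $M_{0}$ the paper invokes the precomputed densities $p_{0}^{c}$, $p_{2}^{c}$ from \cite{CMW}, while you carry out the $\mathrm{Exp}(1/2)$ integral from scratch, which makes your argument self-contained.
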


\begin{proof}
Note that after the transformation
\begin{equation*}
Y_{3}=\frac{1}{\sqrt{3}}Z_{1},\hspace{0.5cm}Y_{4}=Z_{2},\hspace{0.5cm}Y_{5}=%
\frac{\sqrt{3}}{\sqrt{8}}Z_{3}-\frac{1}{\sqrt{3}\sqrt{8}}Z_{1},
\end{equation*}%
we immediately obtain
\begin{equation*}
M_{0}(I)=\frac{1}{8}\frac{1}{32}\mathbb{E}\left[ \left\vert
Z_{1}Z_{3}-Z_{2}^{2}\right\vert [-40+(-3Z_{1}+Z_{3})^{2}]\;\mathbb{I}_{\{%
\frac{1}{\sqrt{8}}(Z_{1}+Z_{3})\in I\}}\right] ,
\end{equation*}%
and
\begin{equation*}
{M}_{1}(I)=\frac{1}{3}{M}_{2}=\frac{1}{8}\frac{1}{2}\mathbb{E}\left[
\left\vert Z_{1}Z_{3}-Z_{2}^{2}\right\vert [2Z_{2}^{2}+Z_{1}(Z_{3}-Z_{1})]\;%
\mathbb{I}_{\{\frac{1}{\sqrt{8}}(Z_{1}+Z_{3})\in I\}}\right] ,
\end{equation*}%
since
\begin{align*}
& \mathbb{E}\left[ \left\vert Z_{1}Z_{3}-Z_{2}^{2}\right\vert
[4Z_{2}^{2}+2Z_{1}Z_{3}-2Z_{1}^{2}]\;\mathbb{I}_{\{\frac{1}{\sqrt{8}}%
(Z_{1}+Z_{3})\in I\}}\right] \\
& =\mathbb{E}\left[ \left\vert Z_{1}Z_{3}-Z_{2}^{2}\right\vert
[-4Z_{2}^{2}+Z_{1}^{2}+Z_{3}^{2}-2Z_{1}Z_{3}]\;\mathbb{I}_{\{\frac{1}{\sqrt{8%
}}(Z_{1}+Z_{3})\in I\}}\right] .
\end{align*}%
We write $W_{1}=Z_{1}$, $W_{2}=Z_{2}$, $W_{3}=Z_{1}+Z_{3}$, i.e. $%
Z_{3}=W_{3}-W_{1}$ so that
\begin{align*}
M_{1}(I)& =\frac{1}{8}\frac{1}{4}\mathbb{E}\left[ \left\vert
W_{1}(W_{3}-W_{1})-W_{2}^{2}\right\vert
[4W_{2}^{2}+2W_{1}(W_{3}-W_{1})-2W_{1}^{2}]\;\mathbb{I}_{\{\frac{1}{\sqrt{8}}%
W_{3}\in I\}}\right] \\
& =\frac{1}{8}\frac{1}{4}\int_{I}\mathbb{E}\left[ \left\vert
W_{1}W_{3}-W_{1}^{2}-W_{2}^{2}\right\vert
[4W_{2}^{2}+2W_{1}W_{3}-4W_{1}^{2}]|W_{3}=\sqrt{8}t\right] \phi _{W_{3}}(%
\sqrt{8}t)dt
\end{align*}%
where%
\begin{align*}
& \mathbb{E}\left[ \left\vert W_{1}W_{3}-W_{1}^{2}-W_{2}^{2}\right\vert
[4W_{2}^{2}+2W_{1}W_{3}-4W_{1}^{2}]|W_{3}=\sqrt{8}t\right] \\
& =\mathbb{E}\left[ \left\vert \sqrt{8}tW_{1}-W_{1}^{2}-W_{2}^{2}\right\vert
[4W_{2}^{2}+2\sqrt{8}tW_{1}-4W_{1}^{2}]|W_{3}=\sqrt{8}t\right]
\end{align*}%
since we know that $(W_{1},W_{2})|W_{3}=\sqrt{8}t$ is distributed as $(X_{1}+%
\sqrt{2}t,X_{2})$ where $(X_{1},X_{2})$ are independent standard Gaussian,
we have
\begin{align*}
& \mathbb{E}\left[ \left\vert \sqrt{8}t(X_{1}+\sqrt{2}t)-(X_{1}+\sqrt{2}%
t)^{2}-X_{2}^{2}\right\vert [4X_{2}^{2}+2\sqrt{8}t(X_{1}+\sqrt{2}t)-4(X_{1}+%
\sqrt{2}t)^{2}]\right] \\
& =\mathbb{E}\left[ \left\vert -X_{1}^{2}-X_{2}^{2}+2t^{2}\right\vert
[4X_{2}^{2}+2\sqrt{8}t(X_{1}+\sqrt{2}t)-4(X_{1}+\sqrt{2}t)^{2}]\right] \\
& =\mathbb{E}\left[ \left\vert -X_{1}^{2}-X_{2}^{2}+2t^{2}\right\vert
[4X_{2}^{2}-4X_{1}^{2}-4\sqrt{2}tX_{1}]\right] \\
& =-4\sqrt{2}t\mathbb{E}\left[ \left\vert
-X_{1}^{2}-X_{2}^{2}+2t^{2}\right\vert X_{1}\right] =0\text{ ,}
\end{align*}%
so that the first part of the Lemma is proved. Also, we have that
\begin{equation*}
M_{0}(I)=\frac{1}{256}[-40\,{\mathcal{I}}_{I,0}+{\mathcal{I}}_{I,2}]
\end{equation*}%
with
\begin{equation*}
{\mathcal{I}}_{I,0}=\mathbb{E}[|Z_{1}Z_{3}-Z_{2}^{2}|\;\mathbb{I}_{\{\frac{1%
}{\sqrt{8}}(Z_{1}+Z_{3})\in I\}}]\text{ },\;{\mathcal{I}}_{I,2}=\mathbb{E}%
[|Z_{1}Z_{3}-Z_{2}^{2}|(Z_{1}-3Z_{3})^{2}\;\mathbb{I}_{\{\frac{1}{\sqrt{8}}%
(Z_{1}+Z_{3})\in I\}}]\text{ }.
\end{equation*}%
The final expression for ${\mathcal{N}}_{\ell ;I}^{c}[2]$ given in Theorem %
\ref{th1} immediately follows by noting that (using the same notation as in
\cite{CMW})
\begin{equation*}
{\mathcal{I}}_{I,0}=\int_{I}p_{0}^{c}(t)dt\text{ },\hspace{1cm}{\mathcal{I}}%
_{I,2}=8\int_{I}p_{2}^{c}(t)dt\text{ },
\end{equation*}%
where
\begin{equation*}
p_{0}^{c}(t)=\sqrt{\frac{2}{\pi }}[2e^{-t^{2}}+t^{2}-1]e^{-\frac{t^{2}}{2}%
},\;\;\;\;p_{2}^{c}(t)=\sqrt{\frac{2}{\pi }}%
[-4+t^{2}+t^{4}+e^{-t^{2}}2(4+3t^{2})]e^{-\frac{t^{2}}{2}}.
\end{equation*}%
Hence%
\begin{equation*}
M_{0}(I)=\frac{1}{8}\left\{ \int_{I}\left( -\frac{5}{4}p_{0}^{c}(t)+\frac{1}{%
4}p_{2}^{c}(t)\right) dt\right\} =\frac{1}{8}\int_{I}p_{3}^{c}(t)dt\text{ ,}
\end{equation*}%
as claimed.
\end{proof}

We have hence obtained
\begin{align*}
{\mathcal{N}}_{\ell ;I}^{c}[2]& =2\lambda _{\ell }\frac{2}{\ell }%
M_{0}(I)\sum_{m=-\ell }^{\ell }\{|a_{\ell m}|^{2}-1\}+R_{3}(\ell ) \\
& =\ell (\ell +1)\frac{4}{\ell }\frac{1}{8}\left\{
\int_{I}p_{3}^{c}(t)dt\right\} \sum_{m=-\ell }^{\ell }\{|a_{\ell
m}|^{2}-1\}+R_{3}(\ell ), \\
& =\frac{(\ell +1)}{2}\left\{ \int_{I}p_{3}^{c}(t)dt\right\} \sum_{m=-\ell
}^{\ell }\{|a_{\ell m}|^{2}-1\}+R_{3}(\ell )\text{ }.
\end{align*}%
Finally, to complete the proof we need only show that
\begin{equation*}
\lim_{\ell \rightarrow \infty }\frac{Var\left\{ {\mathcal{N}}_{\ell
;I}^{c}\right\} }{Var\left\{ {\mathcal{N}}_{\ell ;I}^{c}[2]\right\} }%
=1+\lim_{\ell \rightarrow \infty }\frac{\sum_{q\geq 3}Var\left\{ {\mathcal{N}%
}_{\ell ;I}^{c}[q]\right\} }{Var\left\{ {\mathcal{N}}_{\ell
;I}^{c}[2]\right\} }=0\text{ ;}
\end{equation*}%
indeed we have that%
\begin{align*}
\text{Var}\left( (\ell +1)\frac{1}{2}\int_{I}p_{3}^{c}(t)dt\sum_{m=-\ell
}^{\ell }\{|a_{\ell m}|^{2}-1\}\right) & =\frac{(\ell +1)^{2}}{4}\left\{
\int_{I}p_{3}^{c}(t)dt\right\} ^{2}\text{Var}\left( \sum_{m=-\ell }^{\ell
}\{|a_{\ell m}|^{2}-1\}\right) \\
& =\frac{(\ell +1)^{2}}{4}\left\{ \int_{I}p_{3}^{c}(t)dt\right\} ^{2}2(2\ell
+1) \\
& =\ell ^{3}\left\{ \int_{I}p_{3}^{c}(t)dt\right\} ^{2}+O(\ell ^{2})\text{ ,}
\end{align*}%
where the last term on the right hand side is the variance for ${\mathcal{N}}%
_{\ell ;I}^{c}$ obtained in \cite{CMW}, as claimed$.$

\section{Appendix A: Levi-Civita Connection and the Hessian}

We start by recalling that in the usual spherical coordinates $(\theta,
\varphi )$ the metric tensor on the tangent plane $T(\mathbb{S}^{2})$ is
given by
\begin{equation*}
g(\theta ,\varphi )=\left[
\begin{matrix}
1 & 0 \\
0 & \sin ^{2}\theta%
\end{matrix}%
\right] .
\end{equation*}%
The computation of the number of critical points by means of the Kac-Rice
formula requires the Hessian of our eigenfunctions; the latter requires the
notion of covariant derivatives (i.e., the Levi-Civita connection) on $%
\mathbb{S}^{2}.$ Recall indeed that the Levi-Civita connection is the only
application $\nabla: T(\mathbb{S}^{2})\times T(\mathbb{S}^{2})\rightarrow T(%
\mathbb{S}^{2})$ such that

\begin{itemize}
\item $\nabla _{X}Y$ is $C^{\infty }$-linear in $X,$ i.e. for $f,g\in
C^{\infty }(\mathbb{S}^{2})$ and $X,Z\in T(\mathbb{S}^{2})$, we have that $%
\nabla _{fX+gZ}Y=f\nabla _{X}Y+g\nabla _{Z}Y$

\item $\nabla _{X}Y$ is $\mathbb{R}$-linear in $Y,$ i.e. for $a,b\in \mathbb{%
R}$ and $Y,Z\in T(\mathbb{S}^{2})$, we have that $\nabla _{X}(aY+bZ)=a\nabla
_{X}Y+b\nabla _{X}Z$

\item Leibnitz rule holds, i.e. $\nabla _{X}fY=(Xf)Y+f\nabla _{X}Y$

\item The operator is compatible with the metric, $Xg(Y,Z)=g(\nabla
_{X}Y,Z)+g(Y,\nabla _{X}Z)$

\item The operator is torsion-free, i.e. $XY-YX=\nabla _{X}Y-\nabla _{Y}X$
\end{itemize}

See e.g. \cite{adlertaylor} Chapter 7 for more discussion and details. In
coordinates, the action of the Levi-Civita connection can be obtained by
means of the so-called Christoffel symbols, see e.g. \cite{chavel} Section
I.1. Given a basis $\left\{ e_{1},e_{2}\right\} $ for the tangent plane $T(%
\mathbb{S}^{2}),$ the Christoffel symbols are defined by%
\begin{equation*}
\nabla _{e_{i}}e_{j}=\Gamma _{ij}^{k}e_{k}+\Gamma _{ij}^{l}e_{l}\text{ }%
,\;i,j,k,l=1,2\text{ }.
\end{equation*}%
For instance, considering as a basis the vectors $\left\{ \frac{\partial }{%
\partial \theta },\frac{\partial }{\partial \varphi }\right\} $ the
Christoffel symbols can be shown to be equal to
\begin{equation*}
\Gamma _{\theta \varphi }^{\theta }=\Gamma _{\theta \theta }^{\theta
}=\Gamma _{\varphi \varphi }^{\varphi }=\Gamma _{\theta \theta }^{\varphi
}=0,\hspace{0.5cm}\Gamma _{\varphi \varphi }^{\theta }=-\sin \theta \cos
\theta \text{ },\hspace{0.5cm}\Gamma _{\varphi \theta }^{\varphi }=\cot
\theta \text{ }.
\end{equation*}%
On the other hand, considering the orthonormal vectors $\frac{\partial }{%
\partial \theta }$ and $\frac{\partial }{\sin \theta \partial \varphi },$
using the linearity properties of the Levi Civita connection and the
Leibnitz rule, we obtain also easily
\begin{equation*}
\nabla _{\frac{\partial }{\partial \theta }}\frac{\partial }{\partial \theta
}=0\text{ },\;\;\;\nabla _{\frac{\partial }{\partial \theta }}\frac{\partial
}{\sin \theta \partial \varphi }=0\text{ },\;\;\;\nabla _{\frac{\partial }{%
\sin \theta \partial \varphi }}\frac{\partial }{\partial \theta }=\cot
\theta \frac{\partial }{\sin \theta \partial \varphi }\text{ },\;\;\;\nabla
_{\frac{\partial }{\sin \theta \partial \varphi }}\frac{\partial }{\sin
\theta \partial \varphi }=-\cot \theta \frac{\partial }{\partial \theta }%
\text{ }.
\end{equation*}%
Note indeed that in this framework the torsion-free property yields (taking $%
X=\frac{\partial }{\partial \theta }$ and $Y=\frac{\partial }{\sin \theta
\partial \varphi })$
\begin{equation*}
XY-YX=\frac{\partial }{\partial \theta }\frac{\partial }{\sin \theta
\partial \varphi }-\frac{\partial }{\sin \theta \partial \varphi }\frac{%
\partial }{\partial \theta }=-\cot \theta \frac{\partial }{\sin \theta
\partial \varphi }=\text{ }\nabla _{\frac{\partial }{\partial \theta }}\frac{%
\partial }{\sin \theta \partial \varphi }-\text{ }\nabla _{\frac{\partial }{%
\sin \theta \partial \varphi }}\frac{\partial }{\partial \theta }=\nabla
_{X}Y-\nabla _{Y}X\text{ }.
\end{equation*}%
The (covariant) Hessian is defined as the bilinear form $(\nabla ^{2}f_{\ell
}):T(\mathbb{S}^{2})\times T(\mathbb{S}^{2})\rightarrow \mathbb{R}$ such that%
\begin{equation*}
(\nabla ^{2}f_{\ell })(X,Y):=XYf_{\ell }-\nabla _{X}Yf_{\ell }\text{ };
\end{equation*}%
in coordinates, the (covariant) Hessian matrix is hence obtained by
replacing the elements of the basis $\partial _{1},\partial _{2}$ in this
expression, leading to%
\begin{eqnarray*}
\nabla ^{2}f_{\ell }(x) &=&\left\{ (\nabla ^{2}f_{\ell })(\partial
_{a},\partial _{a})\right\} _{a,b=1,2}=\left\{ \partial _{a}\partial
_{b}f_{\ell }(x)-\nabla _{\partial _{a}}\partial _{b}f_{\ell }(x)\right\}
_{a,b=1,2} \\
&=&\left(
\begin{matrix}
\frac{\partial ^{2}f_{\ell }(x)}{\partial \theta ^{2}}-\Gamma _{\theta
\theta }^{\theta }\frac{\partial f_{\ell }(x)}{\partial \theta }-\Gamma
_{\theta \theta }^{\varphi }\frac{\partial f_{\ell }(x)}{\partial \varphi }
& \frac{1}{\sin \theta _{x}}[\frac{\partial ^{2}f_{\ell }(x)}{\partial
\theta \partial \varphi }-\Gamma _{\varphi \theta }^{\varphi }\frac{\partial
f_{\ell }(x)}{\partial \varphi }-\Gamma _{\theta \varphi }^{\theta }\frac{%
\partial f_{\ell }(x)}{\partial \theta }] \\
\frac{1}{\sin \theta _{x}}[\frac{\partial ^{2}f_{\ell }(x)}{\partial \theta
\partial \varphi }-\Gamma _{\varphi \theta }^{\varphi }\frac{\partial
f_{\ell }(x)}{\partial \varphi }-\Gamma _{\theta \varphi }^{\theta }\frac{%
\partial f_{\ell }(x)}{\partial \theta }] & \frac{1}{\sin ^{2}\theta _{x}}[%
\frac{\partial ^{2}f_{\ell }(x)}{\partial \varphi ^{2}}-\Gamma _{\varphi
\varphi }^{\varphi }\frac{\partial f_{\ell }(x)}{\partial \varphi }-\Gamma
_{\varphi \varphi }^{\theta }\frac{\partial f_{\ell }(x)}{\partial \theta }]%
\end{matrix}%
\right) \\
&=&\left(
\begin{array}{cc}
\frac{\partial ^{2}f_{\ell }(x)}{\partial \theta ^{2}} & \frac{1}{\sin
\theta }(\frac{\partial ^{2}f_{\ell }(x)}{\partial \theta \partial \varphi }%
-\cot \theta \frac{\partial f_{\ell }(x)}{\partial \varphi }) \\
\frac{1}{\sin \theta }(\frac{\partial ^{2}f_{\ell }(x)}{\partial \theta
\partial \varphi }-\cot \theta \frac{\partial f_{\ell }(x)}{\partial \varphi
}) & \frac{1}{\sin ^{2}\theta }(\frac{\partial ^{2}f_{\ell }(x)}{\partial
\varphi ^{2}}+\sin \theta \cos \theta \frac{\partial f_{\ell }(x)}{\partial
\theta })%
\end{array}%
\right) \text{ }.
\end{eqnarray*}%
It is important to note that, as expected
\begin{equation*}
\text{Tr}(\nabla ^{2}f_{\ell }(x))=\frac{\partial ^{2}f_{\ell }(x)}{\partial
\theta ^{2}}+\frac{1}{\sin ^{2}\theta }(\frac{\partial ^{2}f_{\ell }(x)}{%
\partial \varphi ^{2}}+\sin \theta \cos \theta \frac{\partial f_{\ell }(x)}{%
\partial \theta })=\Delta _{S^{2}}f_{\ell }(x)\text{ ,}
\end{equation*}%
i.e., the trace of the Hessian operator corresponds to the Laplacian.

\section{\protect\bigskip Appendix B: Technical Lemmas}

This section collects the analytic expressions for the derivatives that we
have exploited to prove our results.

\subsection{Covariances of first derivatives}

\begin{lemma}
\label{1-1;2-2;1-2} For all points $x=(\theta _{x},\varphi _{x})\in \mathbb{S%
}^{2}\setminus \{N,S\}$,
\begin{align*}
\left. \mathbb{E}\left[ \partial _{1;x}f_{\ell }(x)\partial _{1;y}f_{\ell
}(y)\right] \right\vert _{x=y}& =P_{\ell }^{\prime }(1)\text{ }, \\
\left. \mathbb{E}\left[ \partial _{1;x}f_{\ell }(x)\partial _{2;y}f_{\ell
}(y)\right] \right\vert _{x=y}& =0\text{ }, \\
\left. \mathbb{E}\left[ \partial _{2;x}f_{\ell }(x)\partial _{2;y}f_{\ell
}(y)\right] \right\vert _{x=y}& =P_{\ell }^{\prime }(1)\text{ }.
\end{align*}
\end{lemma}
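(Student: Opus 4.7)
\medskip

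\noindent\textbf{Plan of proof.} The plan is to reduce the three identities to second mixed derivatives of the covariance function. Since $f_\ell$ is a centred Gaussian field whose covariance $P_\ell(\cos\theta_{xy})$ is smooth, derivatives of the field are themselves Gaussian and their covariances equal the corresponding derivatives of the covariance function:
\[
\mathbb{E}\bigl[\partial_{a;x}f_\ell(x)\,\partial_{b;y}f_\ell(y)\bigr]
\;=\;\partial_{a;x}\partial_{b;y}\, P_\ell(\cos\theta_{xy}),\qquad a,b=1,2,
\]
where the operators $\partial_1=\partial_\theta$ and $\partial_2=(\sin\theta)^{-1}\partial_\varphi$ act in the $x$ and $y$ variables as indicated by the subscripts.

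\medskip

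\noindent First I would set $u:=\cos\theta_{xy}=\cos\theta_x\cos\theta_y+\sin\theta_x\sin\theta_y\cos(\varphi_x-\varphi_y)$ and apply the chain rule in the form
\[
\partial_{a;x}\partial_{b;y}\,P_\ell(u)
\;=\;P_\ell''(u)\,(\partial_{a;x}u)(\partial_{b;y}u)\;+\;P_\ell'(u)\,\partial_{a;x}\partial_{b;y}u.
\]
A direct computation shows that $\partial u/\partial\theta_x$ and $\partial u/\partial\varphi_x$ both vanish on the diagonal $x=y$. Consequently the $P_\ell''$-term drops out once we evaluate at $x=y$, and the identity reduces to $P_\ell'(1)\,\partial_{a;x}\partial_{b;y}u\big|_{x=y}$.

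\medskip

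\noindent Next I would evaluate the three second mixed partials of $u$ on the diagonal, carefully carrying the $1/\sin\theta$ factors coming from $\partial_2$. A short calculation gives
\[
\left.\frac{\partial^2 u}{\partial\theta_x\partial\theta_y}\right|_{x=y}=1,\quad
\left.\frac{1}{\sin\theta_y}\frac{\partial^2 u}{\partial\theta_x\partial\varphi_y}\right|_{x=y}=0,\quad
\left.\frac{1}{\sin\theta_x\sin\theta_y}\frac{\partial^2 u}{\partial\varphi_x\partial\varphi_y}\right|_{x=y}=1,
\]
so multiplying by $P_\ell'(1)$ yields the three claimed equalities. Conceptually, this is nothing more than the second-order expansion $1-\cos\theta_{xy}=\tfrac12 d_{\mathbb{S}^2}(x,y)^2+O(d^4)$ combined with the fact that $\{\partial_1,\partial_2\}$ is an orthonormal frame at each point, so that the Gram matrix of $(\partial_1 f_\ell,\partial_2 f_\ell)$ is $P_\ell'(1)\,\mathrm{Id}$.

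\medskip

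\noindent There is no real obstacle here; the whole argument is a one-page chain-rule computation. The only point requiring care is the presence of the $1/\sin\theta$ weights in $\partial_2$, which must be kept outside the differentiations in $\varphi_x,\varphi_y$ (they do not depend on the differentiation variables), and the diagonal evaluation of the first-order partials of $u$, which is what eliminates the $P_\ell''(1)$ contribution.
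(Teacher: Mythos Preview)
Your proposal is correct and follows essentially the same approach as the paper: both differentiate the covariance $P_\ell(\cos\theta_{xy})$ via the chain rule and evaluate on the diagonal, with the $P_\ell''$ contribution vanishing because the first-order partials of $u=\cos\theta_{xy}$ are zero at $x=y$. Your write-up is in fact more streamlined than the paper's, which expands each case term by term rather than invoking the vanishing of $\partial_{a;x}u|_{x=y}$ once at the outset.
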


\begin{proof}
We have that%
\begin{align*}
& \left. \mathbb{E}\left[ \partial _{1;x}f_{\ell }(x)\partial _{1;y}f_{\ell
}(y)\right] \right\vert _{x=y} \\
& =\left. P_{\ell }^{\prime \prime }(\left\langle x,y\right\rangle )\left\{
-\cos \theta _{x}\sin \theta _{y}+\sin \theta _{x}\cos \theta _{y}\cos
(\varphi _{x}-\varphi _{y})\right\} \left\{ -\sin \theta _{x}\cos \theta
_{y}+\cos \theta _{x}\sin \theta _{y}\cos (\varphi _{x}-\varphi
_{y})\right\} \right\vert _{x=y} \\
& \;+\left. P_{\ell }^{\prime }(\left\langle x,y\right\rangle )\left\{ \sin
\theta _{x}\sin \theta _{y}+\cos \theta _{x}\cos \theta _{y}\cos (\varphi
_{x}-\varphi _{y})\right\} \right\vert _{x=y} \\
& =\left. P_{\ell }^{\prime \prime }(1)\left\{ -\cos \theta _{x}\sin \theta
_{x}+\sin \theta _{x}\cos \theta _{x}\right\} \left\{ -\sin \theta _{x}\cos
\theta _{x}+\cos \theta _{x}\sin \theta _{x}\right\} \right\vert _{x=y} \\
& \;+\left. P_{\ell }^{\prime }(1)\left\{ \sin ^{2}\theta _{x}+\cos
^{2}\theta _{x}\right\} \right\vert _{x=y}=P_{\ell }^{\prime }(1)\text{ ,}
\end{align*}%
moreover
\begin{align*}
& \left. \mathbb{E}\left[ \partial _{1;x}f_{\ell }(x)\partial _{2;y}f_{\ell
}(y)\right] \right\vert _{x=y} \\
& =\frac{1}{\sin \theta _{y}}P_{\ell }^{\prime \prime }(\left\langle
x,y\right\rangle )\left\{ -\sin \theta _{x}\cos \theta _{y}+\cos \theta
_{x}\sin \theta _{y}\cos (\varphi _{x}-\varphi _{y})\right\} \left\{ \sin
\theta _{x}\sin \theta _{y}\sin (\varphi _{x}-\varphi _{y})\right\} |_{x=y}
\\
& \;+\frac{1}{\sin \theta _{y}}P_{\ell }^{\prime }(\left\langle
x,y\right\rangle )\cos \theta _{x}\sin \theta _{y}\sin (\varphi _{x}-\varphi
_{y})|_{x=y} \\
& =\frac{1}{\sin \theta _{x}}P_{\ell }^{\prime \prime }(1)\left\{ -\sin
\theta _{x}\cos \theta _{x}+\cos \theta _{x}\sin \theta _{x}\right\} \sin
^{2}\theta _{x}\sin (\varphi _{x}-\varphi _{x})|_{x=y} \\
& \;+\frac{1}{\sin \theta _{x}}P_{\ell }^{\prime }(\left\langle
x,y\right\rangle )\cos \theta _{x}\sin \theta _{x}\sin (\varphi _{x}-\varphi
_{x})|_{x=y}=0\text{ },
\end{align*}%
and finally
\begin{align*}
& \mathbb{E}\left[ \partial _{2;x}f_{\ell }(x)\partial _{2;y}f_{\ell }(y)%
\right] |_{x=y} \\
& =-\frac{1}{\sin \theta _{y}}\frac{1}{\sin \theta _{x}}P_{\ell }^{\prime
\prime }(\left\langle x,y\right\rangle )\left\{ \sin \theta _{x}\sin \theta
_{y}\sin ^{2}(\varphi _{x}-\varphi _{y})\right\} +P_{\ell }^{\prime
}(\left\langle x,y\right\rangle )\cos (\varphi _{x}-\varphi _{y})|_{x=y} \\
& =-P_{\ell }^{\prime \prime }(1)\sin ^{2}(\varphi _{x}-\varphi
_{x})+P_{\ell }^{\prime }(1)\cos (\varphi _{x}-\varphi _{x})|_{x=y}=P_{\ell
}^{\prime }(1)\text{ }.
\end{align*}
\end{proof}

\subsection{Cross-covariances of first- and second-order derivatives}

\begin{lemma}
\label{11-1;11-2}For all points $x=(\theta _{x},\varphi _{x})\in \mathbb{S}%
^{2}\setminus \{N,S\}$,
\begin{equation*}
\left. \mathbb{E}\left[ \partial _{11;x}f_{\ell }(x)\partial _{1;y}f_{\ell
}(y)\right] \right\vert _{x=y}=\left. \mathbb{E}\left[ \partial
_{11;x}f_{\ell }(x)\partial _{2;y}f_{\ell }(y)\right] \right\vert _{x=y}=0%
\text{ .}
\end{equation*}
\end{lemma}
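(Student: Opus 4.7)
The plan is to compute both covariances by writing them as third-order partial derivatives of the covariance function $P_{\ell}(\langle x,y\rangle)$ and then evaluating on the diagonal $x=y$. Set $u(x,y):=\langle x,y\rangle=\cos\theta_x\cos\theta_y+\sin\theta_x\sin\theta_y\cos(\varphi_x-\varphi_y)$, so that for any point $\bar x$,
\begin{equation*}
\mathbb{E}\bigl[\partial_{11;x}f_{\ell}(\bar x)\,\partial_{1;y}f_{\ell}(\bar y)\bigr]\big|_{\bar x=\bar y}=\partial^{3}_{\theta_x\theta_x\theta_y}P_{\ell}(u)\big|_{x=y},
\end{equation*}
and similarly the second quantity equals $(1/\sin\theta_y)\,\partial^{3}_{\theta_x\theta_x\varphi_y}P_{\ell}(u)|_{x=y}$. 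The crucial preliminary observation is that, since $u\le 1$ with equality exactly on the diagonal, all four first partial derivatives $u_{\theta_x},u_{\theta_y},u_{\varphi_x},u_{\varphi_y}$ vanish when $x=y$; a direct check (e.g.\ $u_{\theta_x}|_{x=y}=-\sin\theta\cos\theta+\cos\theta\sin\theta=0$) confirms this.

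Expanding by the chain rule, I obtain
\begin{equation*}
\partial^{3}_{\theta_x\theta_x\theta_y}P_{\ell}(u)=P_{\ell}'''(u)u_{\theta_x}^2 u_{\theta_y}+P_{\ell}''(u)\bigl[2u_{\theta_x}u_{\theta_x\theta_y}+u_{\theta_y}u_{\theta_x\theta_x}\bigr]+P_{\ell}'(u)\,u_{\theta_x\theta_x\theta_y}.
\end{equation*}
By the observation above, every term except the last is killed at $x=y$, so only $P_{\ell}'(1)\,u_{\theta_x\theta_x\theta_y}|_{x=y}$ survives. A direct differentiation shows the clean identity $u_{\theta_x\theta_x}=-u$, whence $u_{\theta_x\theta_x\theta_y}=-u_{\theta_y}$, which again vanishes on the diagonal. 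This yields the first equality.

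The second identity proceeds in exactly the same way: after the same chain-rule expansion, only $P_{\ell}'(1)\,u_{\theta_x\theta_x\varphi_y}|_{x=y}$ remains, and $u_{\theta_x\theta_x\varphi_y}=-u_{\varphi_y}$ vanishes at $x=y$; the $1/\sin\theta_y$ prefactor is harmless away from the poles. I anticipate no obstacle: the argument is an elementary chain-rule computation, the hard work being entirely absorbed by the two structural facts that (a) $\langle x,y\rangle$ is maximized on the diagonal so its first derivatives vanish there, and (b) $u_{\theta_x\theta_x}=-u$ reduces the only surviving third derivative to a first derivative. Conceptually, these identities encode the well-known fact (cited in Section~\ref{cov_matx_ev} following \cite{adlertaylor}, §5.5) that for Gaussian isotropic fields, first- and second-order derivatives at a common point are uncorrelated.
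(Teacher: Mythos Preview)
Your proof is correct and follows the same strategy as the paper's---compute the third partial derivatives of $P_{\ell}(\langle x,y\rangle)$ via the chain rule and evaluate on the diagonal---but your presentation is tidier: where the paper expands each of the (many) terms explicitly and checks them one by one, you extract the two structural facts that drive all the cancellations simultaneously, namely that first derivatives of $u=\langle x,y\rangle$ vanish at its maximum $x=y$, and the neat identity $u_{\theta_x\theta_x}=-u$ which reduces the surviving third derivative back to a first derivative. The content is the same; your organization is simply more efficient.
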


\begin{proof}
We have that
\begin{align*}
& \left. \mathbb{E}\left[ \partial _{11;x}f_{\ell }(x)\partial _{1;y}f_{\ell
}(y)\right] \right\vert _{x=y} \\
& =\left. \partial _{11;x}\left\{ P_{\ell }^{\prime }(\left\langle
x,y\right\rangle )\left\{ -\cos \theta _{x}\sin \theta _{y}+\sin \theta
_{x}\cos \theta _{y}\cos (\varphi _{x}-\varphi _{y})\right\} \right\}
\right\vert _{x=y} \\
& =\left. \partial _{1;x}\left\{ P_{\ell }^{\prime \prime }(\left\langle
x,y\right\rangle )\left\{ -\cos \theta _{x}\sin \theta _{y}+\sin \theta
_{x}\cos \theta _{y}\cos (\varphi _{x}-\varphi _{y})\right\} ^{2}\right\}
\right\vert _{x=y} \\
& +\left. \partial _{1;x}\left\{ P_{\ell }^{\prime }(\left\langle
x,y\right\rangle )\left\{ \cos \theta _{x}\sin \theta _{y}-\sin \theta
_{x}\cos \theta _{y}\cos (\varphi _{x}-\varphi _{y})\right\} \right\}
\right\vert _{x=y} \\
& =\left. \left\{ P_{\ell }^{\prime \prime \prime }(\left\langle
x,y\right\rangle )\left\{ -\cos \theta _{x}\sin \theta _{y}+\sin \theta
_{x}\cos \theta _{y}\cos (\varphi _{x}-\varphi _{y})\right\} ^{3}\right\}
\right\vert _{x=y} \\
& +\left. 2\left\{ P_{\ell }^{\prime \prime }(\left\langle x,y\right\rangle
)\left\{ -\cos \theta _{x}\sin \theta _{y}+\sin \theta _{x}\cos \theta
_{y}\cos (\varphi _{x}-\varphi _{y})\right\} \left\{ \cos \theta _{x}\sin
\theta _{y}-\sin \theta _{x}\cos \theta _{y}\cos (\varphi _{x}-\varphi
_{y})\right\} \right\} \right\vert _{x=y} \\
& +\left. \left\{ P_{\ell }^{\prime \prime }(\left\langle x,y\right\rangle
)\left\{ \cos \theta _{x}\sin \theta _{y}-\sin \theta _{x}\cos \theta
_{y}\cos (\varphi _{x}-\varphi _{y})\right\} ^{2}\right\} \right\vert _{x=y}
\\
& +\left. \left\{ P_{\ell }^{\prime }(\left\langle x,y\right\rangle )\left\{
-\cos \theta _{x}\sin \theta _{y}+\sin \theta _{x}\cos \theta _{y}\cos
(\varphi _{x}-\varphi _{y})\right\} \right\} \right\vert _{x=y}=0\text{ .}
\end{align*}%
Likewise
\begin{align*}
& \left. \mathbb{E}\left[ \partial _{11;x}f_{\ell }(x)\partial _{2;y}f_{\ell
}(y)\right] \right\vert _{x=y} \\
& =\frac{1}{\sin \theta _{y}}\left. \left\{ P_{\ell }^{\prime \prime \prime
}(\left\langle x,y\right\rangle )\left\{ -\cos \theta _{x}\sin \theta
_{y}+\sin \theta _{x}\sin \theta _{y}\cos (\varphi _{x}-\varphi
_{y})\right\} ^{2}\left\{ \sin \theta _{x}\sin \theta _{y}\sin (\varphi
_{x}-\varphi _{y})\right\} \right\} \right\vert _{x=y} \\
& +\frac{1}{\sin \theta _{y}}\left. \left\{ P_{\ell }^{\prime \prime
}(\left\langle x,y\right\rangle )\left\{ \sin \theta _{x}\sin \theta
_{y}+\cos \theta _{x}\sin \theta _{y}\cos (\varphi _{x}-\varphi
_{y})\right\} \left\{ \sin \theta _{x}\sin \theta _{y}\sin (\varphi
_{x}-\varphi _{y})\right\} \right\} \right\vert _{x=y} \\
& +\frac{1}{\sin \theta _{y}}\left. \left\{ P_{\ell }^{\prime \prime
}(\left\langle x,y\right\rangle )\left\{ -\cos \theta _{x}\sin \theta
_{y}+\sin \theta _{x}\sin \theta _{y}\cos (\varphi _{x}-\varphi
_{y})\right\} \left\{ \cos \theta _{x}\sin \theta _{y}\sin (\varphi
_{x}-\varphi _{y})\right\} \right\} \right\vert _{x=y} \\
& +\frac{1}{\sin \theta _{y}}\left. \left\{ P_{\ell }^{\prime \prime
}(\left\langle x,y\right\rangle )\left\{ \cos \theta _{x}\sin \theta
_{y}\sin (\varphi _{x}-\varphi _{y})\right\} \left\{ -\cos \theta _{x}\sin
\theta _{y}+\sin \theta _{x}\sin \theta _{y}\cos (\varphi _{x}-\varphi
_{y})\right\} \right\} \right\vert _{x=y} \\
& -\frac{1}{\sin \theta _{y}}\left. \left\{ P_{\ell }^{\prime }(\left\langle
x,y\right\rangle )\left\{ \sin \theta _{x}\sin \theta _{y}\sin (\varphi
_{x}-\varphi _{y})\right\} \right\} \right\vert _{x=y}=0\text{ .}
\end{align*}
\end{proof}

\begin{lemma}
\label{22-1;22-2}For all points $x=(\theta _{x},\varphi _{x})\in \mathbb{S}%
^{2}\setminus \{N,S\}$,
\begin{align*}
\left. \mathbb{E}\left[ \partial _{22;x}f_{\ell }(x)\partial _{1;y}f_{\ell
}(y)\right] \right\vert _{x=y} &=-\cot \theta _{x}P_{\ell }^{\prime }(1)%
\text{ ,} \\
\left. \mathbb{E}\left[ \partial _{22;x}f_{\ell }(x)\partial _{2;y}f_{\ell
}(y)\right] \right\vert _{x=y} &=0.
\end{align*}
\end{lemma}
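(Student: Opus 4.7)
The plan is to proceed exactly as in the proof of Lemma \ref{11-1;11-2}, namely by expressing the covariance as a function on $\mathbb{S}^2 \times \mathbb{S}^2$ and computing iterated partial derivatives via the chain rule. Set
\[
K_\ell(x,y) := \mathbb{E}[f_\ell(x) f_\ell(y)] = P_\ell(u), \qquad u := \cos\theta_{xy} = \cos\theta_x\cos\theta_y + \sin\theta_x\sin\theta_y\cos(\varphi_x-\varphi_y).
\]
The required cross-covariances are obtained by applying the operators $\partial_{22;x} = \frac{1}{\sin^2\theta_x}\partial^2_{\varphi_x}$, and then $\partial_{1;y}=\partial_{\theta_y}$ or $\partial_{2;y}=\frac{1}{\sin\theta_y}\partial_{\varphi_y}$, to $K_\ell(x,y)$, and evaluating at $x=y$.

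First I would compute
\[
\partial_{22;x} K_\ell(x,y) = P_\ell''(u)\,\sin^2\theta_y\sin^2(\varphi_x-\varphi_y) - P_\ell'(u)\,\frac{\sin\theta_y\cos(\varphi_x-\varphi_y)}{\sin\theta_x},
\]
using $\partial_{\varphi_x} u = -\sin\theta_x\sin\theta_y\sin(\varphi_x-\varphi_y)$ and $\partial^2_{\varphi_x} u = -\sin\theta_x\sin\theta_y\cos(\varphi_x-\varphi_y)$. Then I would apply $\partial_{\theta_y}$ (respectively $\frac{1}{\sin\theta_y}\partial_{\varphi_y}$) to the above expression. At the diagonal $x=y$ we have $u=1$, $\sin(\varphi_x-\varphi_y)=0$, $\cos(\varphi_x-\varphi_y)=1$, and $\partial_{\theta_y} u|_{x=y} = \partial_{\varphi_y} u|_{x=y} = 0$; consequently every term containing a factor of $\sin(\varphi_x-\varphi_y)$ or of $\partial_{\theta_y} u$ (or $\partial_{\varphi_y} u$) drops out when we set $x=y$.

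For the first statement, the only surviving contribution after applying $\partial_{\theta_y}$ comes from differentiating $\cos(\varphi_x-\varphi_y)/\sin\theta_x$ factor's $\sin\theta_y$ with respect to $\theta_y$ in the second summand, which yields $-P_\ell'(1)\frac{\cos\theta_y}{\sin\theta_x}\big|_{x=y} = -\cot\theta_x\, P_\ell'(1)$, matching the claim. For the second statement, after applying $\frac{1}{\sin\theta_y}\partial_{\varphi_y}$, every resulting term is proportional either to $\sin(\varphi_x-\varphi_y)$ or to $\partial_{\varphi_y} u = \sin\theta_x\sin\theta_y\sin(\varphi_x-\varphi_y)$, and therefore vanishes identically at $x=y$.

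There is no real obstacle beyond careful bookkeeping; the computation is mechanical chain-rule differentiation. The mildly delicate point is ensuring that the factor $1/\sin\theta_x$ arising from $\partial_{22;x}$ does not cause trouble at the diagonal, and checking that the one non-vanishing derivative in the first case produces precisely the advertised $\cot\theta_x$ factor. All other terms collapse by the vanishing of $\sin(\varphi_x-\varphi_y)$ at $x=y$, in complete analogy with the arguments already carried out in Lemmas \ref{1-1;2-2;1-2} and \ref{11-1;11-2}.
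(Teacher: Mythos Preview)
Your proposal is correct and follows essentially the same approach as the paper: express the covariance as $P_\ell(u)$, apply the relevant differential operators via the chain rule, and evaluate at the diagonal, where all terms carrying a factor $\sin(\varphi_x-\varphi_y)$ or a first derivative of $u$ vanish. The only cosmetic difference is that you apply $\partial_{22;x}$ first and then $\partial_{1;y}$ (resp.\ $\partial_{2;y}$), whereas the paper differentiates in $y$ first; your order yields a slightly tidier intermediate expression, but the substance is identical.
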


\begin{proof}
We have that
\begin{align*}
& \left. \mathbb{E}\left[ \partial _{22;x}f_{\ell }(x)\partial _{1;y}f_{\ell
}(y)\right] \right\vert _{x=y} \\
& =\left. \partial _{22;x}\left\{ P_{\ell }^{\prime }(\left\langle
x,y\right\rangle )\left\{ -\cos \theta _{x}\sin \theta _{y}+\cos \theta
_{x}\sin \theta _{y}\cos (\varphi _{x}-\varphi _{y})\right\} \right\}
\right\vert _{x=y} \\
& =\frac{1}{\sin ^{2}\theta _{x}}\left. \left\{ P_{\ell }^{\prime \prime
\prime }(\left\langle x,y\right\rangle )\left\{ -\sin \theta _{x}\sin \theta
_{y}\sin (\varphi _{x}-\varphi _{y})\right\} \left\{ -\cos \theta _{x}\sin
\theta _{y}+\cos \theta _{x}\sin \theta _{y}\cos (\varphi _{x}-\varphi
_{y})\right\} ^{2}\right\} \right\vert _{x=y} \\
& +\frac{1}{\sin ^{2}\theta _{x}}\left. \left\{ P_{\ell }^{\prime \prime
}(\left\langle x,y\right\rangle )\left\{ -\sin \theta _{x}\sin \theta
_{y}\cos (\varphi _{x}-\varphi _{y})\right\} \left\{ -\cos \theta _{x}\sin
\theta _{y}+\cos \theta _{x}\sin \theta _{y}\cos (\varphi _{x}-\varphi
_{y})\right\} \right\} \right\vert _{x=y} \\
& +\frac{1}{\sin ^{2}\theta _{x}}\left. \left\{ P_{\ell }^{\prime \prime
}(\left\langle x,y\right\rangle )\left\{ -\sin \theta _{x}\sin \theta
_{y}\sin (\varphi _{x}-\varphi _{y})\right\} \left\{ -\cos \theta _{x}\sin
\theta _{y}\sin (\varphi _{x}-\varphi _{y})\right\} \right\} \right\vert
_{x=y} \\
& +\frac{1}{\sin ^{2}\theta _{x}}\left. \left\{ P_{\ell }^{\prime \prime
}(\left\langle x,y\right\rangle )\left\{ -\sin \theta _{x}\sin \theta
_{y}\sin (\varphi _{x}-\varphi _{y})\right\} \left\{ -\cos \theta _{x}\sin
\theta _{y}\sin (\varphi _{x}-\varphi _{y})\right\} \right\} \right\vert
_{x=y} \\
& +\frac{1}{\sin ^{2}\theta _{x}}\left. \left\{ P_{\ell }^{\prime
}(\left\langle x,y\right\rangle )\left\{ -\cos \theta _{x}\sin \theta
_{y}\cos (\varphi _{x}-\varphi _{y})\right\} \right\} \right\vert
_{x=y}=-\cot \theta _{x}P_{\ell }^{\prime }(1)\text{ .}
\end{align*}%
The proof of the second result is similar:%
\begin{align*}
& \left. \mathbb{E}\left[ \partial _{22;x}f_{\ell }(x)\partial _{2;y}f_{\ell
}(y)\right] \right\vert _{x=y} \\
& =\frac{1}{\sin \theta _{y}}\left. \partial _{22;x}\left\{ P_{\ell
}^{\prime }(\left\langle x,y\right\rangle )\left\{ \sin \theta _{x}\sin
\theta _{y}\sin (\varphi _{x}-\varphi _{y})\right\} \right\} \right\vert
_{x=y} \\
& =\frac{1}{\sin ^{2}\theta _{x}}\frac{1}{\sin \theta _{y}}\left. \left\{
P_{\ell }^{\prime \prime \prime }(\left\langle x,y\right\rangle )\left\{
\sin \theta _{x}\sin \theta _{y}\sin (\varphi _{x}-\varphi _{y})\right\}
^{3}\right\} \right\vert _{x=y} \\
& +\frac{1}{\sin ^{2}\theta _{x}}\frac{1}{\sin \theta _{y}}\left. 2\left\{
P_{\ell }^{\prime \prime }(\left\langle x,y\right\rangle )2\left\{ \sin
\theta _{x}\sin \theta _{y}\sin (\varphi _{x}-\varphi _{y})\right\} \left\{
\sin \theta _{x}\sin \theta _{y}\cos (\varphi _{x}-\varphi _{y})\right\}
\right\} \right\vert _{x=y} \\
& -\frac{1}{\sin ^{2}\theta _{x}}\frac{1}{\sin \theta _{y}}\left. \left\{
P_{\ell }^{\prime \prime }(\left\langle x,y\right\rangle )\left\{ \sin
\theta _{x}\sin \theta _{y}\sin (\varphi _{x}-\varphi _{y})\right\} \left\{
\sin \theta _{x}\sin \theta _{y}\cos (\varphi _{x}-\varphi _{y})\right\}
\right\} \right\vert _{x=y} \\
& -\frac{1}{\sin ^{2}\theta _{x}}\frac{1}{\sin \theta _{y}}\left. \left\{
P_{\ell }^{\prime }(\left\langle x,y\right\rangle )\left\{ \sin \theta
_{x}\sin \theta _{y}\sin (\varphi _{x}-\varphi _{y})\right\} \right\}
\right\vert _{x=y}=0\text{ .}
\end{align*}
\end{proof}

\noindent For the cross-derivatives, we shall need also:

\begin{lemma}
\label{12-1;12-2}For all points $x=(\theta _{x},\varphi _{x})\in \mathbb{S}%
^{2}\setminus \{N,S\}$,
\begin{align*}
\left. \mathbb{E}\left[ \partial _{21;x}f_{\ell }(x)\partial _{1;y}f_{\ell
}(y)\right] \right\vert _{x=y} &=0\text{ ,} \\
\left. \mathbb{E}\left[ \partial _{21;x}f_{\ell }(x)\partial _{2;y}f_{\ell
}(y)\right] \right\vert _{x=y} &=\cot \theta _{x}P_{\ell }^{\prime }(1).
\end{align*}
\end{lemma}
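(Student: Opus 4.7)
The plan is to mirror the direct differentiation strategy of Lemmas~\ref{11-1;11-2} and~\ref{22-1;22-2}: differentiate the covariance kernel $\mathbb{E}[f_\ell(x)f_\ell(y)]=P_\ell(\langle x,y\rangle)$ using the chain rule, and then restrict to $x=y$, where $\langle x,y\rangle=1$, $\cos(\varphi_x-\varphi_y)=1$, and $\sin(\varphi_x-\varphi_y)=0$. Writing $u=\langle x,y\rangle$, I will first tabulate the vanishing identities that hold at $x=y$: directly from the formula for $\langle x,y\rangle$ one gets $\partial_{\theta_x}u=0$, $\partial_{\varphi_x}u=0$ and $\partial_{\varphi_x\theta_x}u=0$. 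These, together with the factor $\sin(\varphi_x-\varphi_y)$ itself, will collapse nearly every term produced by the iterated chain rule.

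For the first identity, I set $G:=-\cos\theta_x\sin\theta_y+\sin\theta_x\cos\theta_y\cos(\varphi_x-\varphi_y)$ so that $\partial_{1;y}P_\ell(u)=P_\ell'(u)\,G$. Applying $\partial_{\theta_x}$ and then $\frac{1}{\sin\theta_x}\partial_{\varphi_x}$ via the Leibniz rule produces five terms. At $x=y$ one checks $G=0$, $\partial_{\varphi_x}G=0$ and $\partial_{\varphi_x\theta_x}G=0$ (each inherits a $\sin(\varphi_x-\varphi_y)$ factor), so combined with the vanishings of $\partial_{\theta_x}u$, $\partial_{\varphi_x}u$ and $\partial_{\varphi_x\theta_x}u$, all five terms vanish.

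For the second identity, set $H:=\sin\theta_x\sin(\varphi_x-\varphi_y)$ so that $\partial_{2;y}P_\ell(u)=P_\ell'(u)\,H$ (after the $\frac{1}{\sin\theta_y}$ in $\partial_{2;y}$ cancels the $\sin\theta_y$ produced by $\partial_{\varphi_y}u$). The same iterated chain rule again yields five terms. Four of them contain $H$, $\partial_{\theta_x}H$ (both equal to zero at $x=y$), $\partial_{\theta_x}u$, or $\partial_{\varphi_x}u$ as factors and thus vanish. The only surviving contribution is $P_\ell'(1)\,\partial_{\varphi_x\theta_x}H\big|_{x=y}=P_\ell'(1)\cos\theta_x$; division by $\sin\theta_x$ then gives $\cot\theta_x\,P_\ell'(1)$.

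The main obstacle is purely the combinatorial bookkeeping of the five terms per identity, but the pattern of vanishing becomes transparent once the values of $u$, $G$, $H$ and their relevant first and mixed second derivatives at $x=y$ are tabulated. As an independent consistency check, both identities also follow from the isotropy argument underlying the block-diagonal structure of $\sigma_\ell(x)$ in Section~\ref{cov_matx_ev}: the covariant Hessian entry $(\nabla^2 f_\ell)(\partial_2,\partial_1)=\partial_{21}f_\ell-\cot\theta_x\,\partial_2 f_\ell$ is uncorrelated with $\partial_1 f_\ell$ and $\partial_2 f_\ell$ at a single point, so that combining this with Lemma~\ref{1-1;2-2;1-2} delivers exactly the stated values.
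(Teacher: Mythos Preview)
Your proposal is correct and follows essentially the same approach as the paper: direct differentiation of the covariance kernel $P_\ell(\langle x,y\rangle)$ via the chain rule, followed by restriction to $x=y$ where the vanishing of $\sin(\varphi_x-\varphi_y)$, $\partial_{\theta_x}u$, $\partial_{\varphi_x}u$, $G$, etc.\ collapses the resulting sum of terms. The paper writes out each of the five terms with the trigonometric factors fully expanded, whereas you streamline the bookkeeping by introducing $G$ and $H$ and tabulating the vanishing identities first; but the underlying computation is identical, and your additional isotropy consistency check is a nice (though inessential) bonus.
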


\begin{proof}
We have that
\begin{align*}
& \left. \mathbb{E}\left[ \partial _{12;x}f_{\ell }(x)\partial _{1;y}f_{\ell
}(y)\right] \right\vert _{x=y} \\
& =-\frac{1}{\sin \theta _{x}}\left. \left\{ P_{\ell }^{\prime \prime \prime
}(\left\langle x,y\right\rangle )\left\{ \sin \theta _{x}\sin \theta
_{y}\sin (\varphi _{x}-\varphi _{y})\right\} \left\{ -\cos \theta _{x}\sin
\theta _{y}+\sin \theta _{x}\cos \theta _{y}\cos (\varphi _{x}-\varphi
_{y})\right\} ^{2}\right\} \right\vert _{x=y} \\
& -\frac{1}{\sin \theta _{x}}\left. \left\{ P_{\ell }^{\prime \prime
}(\left\langle x,y\right\rangle )\left\{ \cos \theta _{x}\sin \theta
_{y}\sin (\varphi _{x}-\varphi _{y})\right\} \left\{ -\cos \theta _{x}\sin
\theta _{y}+\sin \theta _{x}\cos \theta _{y}\cos (\varphi _{x}-\varphi
_{y})\right\} \right\} \right\vert _{x=y} \\
& -\frac{1}{\sin \theta _{x}}\left. \left\{ P_{\ell }^{\prime \prime
}(\left\langle x,y\right\rangle )\left\{ \sin \theta _{x}\sin \theta
_{y}\sin (\varphi _{x}-\varphi _{y})\right\} \left\{ \sin \theta _{x}\sin
\theta _{y}+\cos \theta _{x}\cos \theta _{y}\cos (\varphi _{x}-\varphi
_{y})\right\} \right\} \right\vert _{x=y} \\
& -\frac{1}{\sin \theta _{x}}\left. \left\{ P_{\ell }^{\prime \prime
}(\left\langle x,y\right\rangle )\left\{ -\sin \theta _{x}\cos \theta
_{y}+\cos \theta _{x}\sin \theta _{y}\cos (\varphi _{x}-\varphi
_{y})\right\} \left\{ \sin \theta _{x}\cos \theta _{y}\sin (\varphi
_{x}-\varphi _{y})\right\} \right\} \right\vert _{x=y} \\
& -\frac{1}{\sin \theta _{x}}\left. \left\{ P_{\ell }^{\prime }(\left\langle
x,y\right\rangle )\left\{ \cos \theta _{x}\cos \theta _{y}\sin (\varphi
_{x}-\varphi _{y})\right\} \right\} \right\vert _{x=y}=0,
\end{align*}%
and more importantly
\begin{align*}
& \left. \mathbb{E}\left[ \partial _{12;x}f_{\ell }(x)\partial _{2;y}f_{\ell
}(y)\right] \right\vert _{x=y} \\
& =\frac{1}{\sin \theta _{y}}\left. \partial _{12;x}\left\{ P_{\ell
}^{\prime }(\left\langle x,y\right\rangle )\left\{ \sin \theta _{x}\sin
\theta _{y}\sin (\varphi _{x}-\varphi _{y})\right\} \right\} \right\vert
_{x=y} \\
& =-\frac{1}{\sin \theta _{x}}\frac{1}{\sin \theta _{y}}\left. \partial
_{1;x}\left\{ P_{\ell }^{\prime \prime }(\left\langle x,y\right\rangle
)\left\{ \sin \theta _{x}\sin \theta _{y}\sin (\varphi _{x}-\varphi
_{y})\right\} ^{2}\right\} \right\vert _{x=y} \\
& +\frac{1}{\sin \theta _{x}}\frac{1}{\sin \theta _{y}}\left. \partial
_{1;x}\left\{ P_{\ell }^{\prime }(\left\langle x,y\right\rangle )\left\{
\sin \theta _{x}\sin \theta _{y}\cos (\varphi _{x}-\varphi _{y})\right\}
\right\} \right\vert _{x=y} \\
& =-\frac{1}{\sin \theta _{x}}\frac{1}{\sin \theta _{y}}\left. \left\{
P_{\ell }^{\prime \prime \prime }(\left\langle x,y\right\rangle )\left\{
-\sin \theta _{x}\cos \theta _{y}+\sin \theta _{x}\sin \theta _{y}\cos
(\varphi _{x}-\varphi _{y})\right\} \left\{ \sin \theta _{x}\sin \theta
_{y}\sin (\varphi _{x}-\varphi _{y})\right\} ^{2}\right\} \right\vert _{x=y}
\\
& -\frac{1}{\sin \theta _{x}}\frac{1}{\sin \theta _{y}}\left. \left\{
P_{\ell }^{\prime \prime }(\left\langle x,y\right\rangle )2\left\{ \sin
\theta _{x}\sin \theta _{y}\sin (\varphi _{x}-\varphi _{y})\right\} \left\{
\cos \theta _{x}\sin \theta _{y}\sin (\varphi _{x}-\varphi _{y})\right\}
\right\} \right\vert _{x=y} \\
& +\frac{1}{\sin \theta _{x}}\frac{1}{\sin \theta _{y}}\left. \left\{
P_{\ell }^{\prime \prime }(\left\langle x,y\right\rangle )\left\{ -\sin
\theta _{x}\cos \theta _{y}+\cos \theta _{x}\sin \theta _{y}\cos (\varphi
_{x}-\varphi _{y})\right\} \left\{ \sin \theta _{x}\sin \theta _{y}\cos
(\varphi _{x}-\varphi _{y})\right\} \right\} \right\vert _{x=y} \\
& +\frac{1}{\sin \theta _{x}}\frac{1}{\sin \theta _{y}}\left. \left\{
P_{\ell }^{\prime }(\left\langle x,y\right\rangle )\left\{ \cos \theta
_{x}\sin \theta _{y}\cos (\varphi _{x}-\varphi _{y})\right\} \right\}
\right\vert _{x=y}=\cot \theta _{x}P_{\ell }^{\prime }(1).
\end{align*}
\end{proof}

\subsection{Covariances of second-order derivatives}

Let us now consider the second-order derivatives.

\begin{lemma}
\label{11-11}For all points $x=(\theta _{x},\varphi _{x})\in \mathbb{S}%
^{2}\setminus \{N,S\}$,
\begin{equation*}
\left. \mathbb{E}\left[ \partial _{11;x}f_{\ell }(x)\partial _{11;y}f_{\ell
}(y)\right] \right\vert _{x=y}=3P_{\ell }^{\prime \prime }(1)+P_{\ell
}^{\prime }(1).
\end{equation*}
\end{lemma}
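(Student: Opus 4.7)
The plan is to compute the left-hand side directly from the covariance function
\[
\mathbb{E}[f_\ell(x) f_\ell(y)] = P_\ell(u(x,y)), \qquad u(x,y) := \cos\theta_x\cos\theta_y + \sin\theta_x\sin\theta_y\cos(\varphi_x-\varphi_y),
\]
by interchanging differentiation and expectation (justified by the $L^2$ convergence of the Gaussian series for $f_\ell$ together with its termwise derivatives), so that
\[
\mathbb{E}[\partial_{11;x} f_\ell(x)\, \partial_{11;y} f_\ell(y)] \;=\; \partial^2_{\theta_x}\partial^2_{\theta_y} P_\ell(u(x,y)).
\]
The task then reduces to a purely deterministic chain-rule calculation, evaluated in the limit $x\to y$.

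First I would expand $\partial^2_{\theta_x}\partial^2_{\theta_y} P_\ell(u)$ via the Faà di Bruno / repeated chain rule. Writing $f = P_\ell$ and using subscripts to denote partial derivatives of $u$ with respect to $\theta_x$ and $\theta_y$, one obtains
\[
F_{xxyy} = f''''(u)\,u_x^2 u_y^2 + f'''(u)\bigl[4 u_x u_y u_{xy} + u_x^2 u_{yy} + u_y^2 u_{xx}\bigr] + f''(u)\bigl[2 u_{xy}^2 + 2 u_x u_{xyy} + 2 u_y u_{xxy} + u_{xx} u_{yy}\bigr] + f'(u)\,u_{xxyy}.
\]
The key simplification is the observation that at $x=y$ we have
\[
u_x|_{x=y} = -\sin\theta_x\cos\theta_x + \cos\theta_x\sin\theta_x = 0, \qquad u_y|_{x=y} = 0,
\]
so every term still carrying a factor $u_x$ or $u_y$ drops out. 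This collapses the expression to
\[
F_{xxyy}|_{x=y} = f''(1)\bigl[2 u_{xy}^2 + u_{xx}u_{yy}\bigr]_{x=y} + f'(1)\, u_{xxyy}|_{x=y}.
\]

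The remaining step is to compute the four elementary derivatives of $u$ at the diagonal. A direct computation gives
\[
u_{xx}|_{x=y} = -\cos^2\theta_x - \sin^2\theta_x = -1, \qquad u_{yy}|_{x=y} = -1,
\]
\[
u_{xy}|_{x=y} = \sin^2\theta_x + \cos^2\theta_x = 1, \qquad u_{xxyy}|_{x=y} = \cos^2\theta_x + \sin^2\theta_x = 1.
\]
Plugging in yields $F_{xxyy}|_{x=y} = P_\ell''(1)\bigl[2\cdot 1 + (-1)(-1)\bigr] + P_\ell'(1)\cdot 1 = 3 P_\ell''(1) + P_\ell'(1)$, which is the desired identity.

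There is no genuine obstacle here: the argument is a mechanical chain-rule bookkeeping, and the only place one must be slightly careful is in correctly listing all the terms of $F_{xxyy}$ so that no contribution is dropped before the simplification $u_x|_{x=y}=u_y|_{x=y}=0$ is applied. The same template (compute $u$ and its derivatives on the diagonal, then apply the chain rule to $P_\ell(u)$) underlies all of Lemmas~\ref{1-1;2-2;1-2}--\ref{21-22}, so this computation also serves as the prototype for the other covariance formulas used in the preceding Proposition.
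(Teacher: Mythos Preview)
Your proof is correct and follows essentially the same approach as the paper: both compute the fourth mixed derivative $\partial_{\theta_x}^2\partial_{\theta_y}^2 P_\ell(\langle x,y\rangle)$ by the chain rule and evaluate at the diagonal. Your presentation is somewhat cleaner than the paper's, since you write the abstract Fa\`a di Bruno expansion first and exploit $u_x|_{x=y}=u_y|_{x=y}=0$ to kill terms before substituting trigonometric expressions, whereas the paper carries the full trigonometric expansion through and collects the surviving pieces into the blocks $A=0$, $B=2P_\ell''(1)$, $C=P_\ell''(1)$, $D=P_\ell'(1)$.
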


\begin{proof}
By an explicit computation of derivatives, we have that
\begin{equation*}
\left. \mathbb{E}\left[ \partial _{11;x}f_{\ell }(x)\partial _{11;y}f_{\ell
}(y)\right] \right\vert _{x=y}=A+B+C+D,
\end{equation*}%
where
\begin{align*}
A& =P_{\ell }^{\prime \prime \prime \prime }(\left\langle x,y\right\rangle
)\left\{ -\sin \theta _{x}\cos \theta _{y}+\cos \theta _{x}\sin \theta
_{y}\cos (\varphi _{x}-\varphi _{x})\right\} ^{2}\left\{ -\cos \theta
_{x}\sin \theta _{y}+\sin \theta _{x}\cos \theta _{y}\cos (\varphi
_{x}-\varphi _{x})\right\} ^{2}|_{x=y} \\
& \;+P_{\ell }^{\prime \prime \prime }(\left\langle x,y\right\rangle
)\left\{ -\cos \theta _{x}\cos \theta _{y}-\sin \theta _{x}\sin \theta
_{y}\cos (\varphi _{x}-\varphi _{x})\right\} \left\{ -\cos \theta _{x}\sin
\theta _{y}+\sin \theta _{x}\cos \theta _{y}\cos (\varphi _{x}-\varphi
_{x})\right\} ^{2}|_{x=y} \\
& \;+P_{\ell }^{\prime \prime \prime }(\left\langle x,y\right\rangle
)\left\{ -\sin \theta _{x}\cos \theta _{y}+\cos \theta _{x}\sin \theta
_{y}\cos (\varphi _{x}-\varphi _{x})\right\} \\
& \;\times 2\left\{ \sin \theta _{x}\sin \theta _{y}+\cos \theta _{x}\cos
\theta _{y}\cos (\varphi _{x}-\varphi _{x})\right\} \left\{ -\cos \theta
_{x}\sin \theta _{y}+\sin \theta _{x}\cos \theta _{y}\cos (\varphi
_{x}-\varphi _{x})\right\} |_{x=y} \\
& =0,
\end{align*}%
\begin{align*}
B& =\left. P_{\ell }^{\prime \prime \prime }(\left\langle x,y\right\rangle
)2\left\{ -\cos \theta _{x}\sin \theta _{y}+\sin \theta _{x}\cos \theta
_{y}\cos (\varphi _{x}-\varphi _{x})\right\} ^{2}\left\{ \sin \theta
_{x}\sin \theta _{y}+\cos \theta _{x}\cos \theta _{y}\cos (\varphi
_{x}-\varphi _{x})\right\} \right\vert _{x=y} \\
& \;+\left. P_{\ell }^{\prime \prime }(\left\langle x,y\right\rangle
)2\left\{ \sin \theta _{x}\sin \theta _{y}+\cos \theta _{x}\cos \theta
_{y}\cos (\varphi _{x}-\varphi _{x})\right\} \left\{ \sin \theta _{x}\sin
\theta _{y}+\cos \theta _{x}\cos \theta _{y}\cos (\varphi _{x}-\varphi
_{x})\right\} \right\vert _{x=y} \\
& \;+\left. P_{\ell }^{\prime \prime }(\left\langle x,y\right\rangle
)2\left\{ -\cos \theta _{x}\sin \theta _{y}+\sin \theta _{x}\cos \theta
_{y}\cos (\varphi _{x}-\varphi _{x})\right\} \left\{ \cos \theta _{x}\sin
\theta _{y}-\sin \theta _{x}\cos \theta _{y}\cos (\varphi _{x}-\varphi
_{x})\right\} \right\vert _{x=y} \\
& =2P_{\ell }^{\prime \prime }(1),
\end{align*}%
\begin{align*}
C& =\left. P_{\ell }^{\prime \prime \prime }(\left\langle x,y\right\rangle
)\left\{ -\sin \theta _{x}\cos \theta _{y}+\cos \theta _{x}\sin \theta
_{y}\cos (\varphi _{x}-\varphi _{x})\right\} ^{2}\left\{ -\cos \theta
_{x}\cos \theta _{y}-\sin \theta _{x}\sin \theta _{y}\cos (\varphi
_{x}-\varphi _{x})\right\} \right\vert _{x=y} \\
& \;+\left. P_{\ell }^{\prime \prime }(\left\langle x,y\right\rangle
)\left\{ -\cos \theta _{x}\cos \theta _{y}-\sin \theta _{x}\sin \theta
_{y}\cos (\varphi _{x}-\varphi _{x})\right\} \left\{ -\cos \theta _{x}\cos
\theta _{y}-\sin \theta _{x}\sin \theta _{y}\cos (\varphi _{x}-\varphi
_{x})\right\} \right\vert _{x=y} \\
& \;+\left. P_{\ell }^{\prime \prime }(\left\langle x,y\right\rangle
)\left\{ -\sin \theta _{x}\cos \theta _{y}+\cos \theta _{x}\sin \theta
_{y}\cos (\varphi _{x}-\varphi _{x})\right\} \left\{ \sin \theta _{x}\cos
\theta _{y}-\cos \theta _{x}\sin \theta _{y}\cos (\varphi _{x}-\varphi
_{x})\right\} \right\vert _{x=y} \\
& =P_{\ell }^{\prime \prime }(1)
\end{align*}%
and finally
\begin{align*}
D& =\left. P_{\ell }^{\prime \prime }(\left\langle x,y\right\rangle )\left\{
-\sin \theta _{x}\cos \theta _{y}+\cos \theta _{x}\sin \theta _{y}\cos
(\varphi _{x}-\varphi _{x})\right\} \left\{ \sin \theta _{x}\cos \theta
_{y}-\cos \theta _{x}\sin \theta _{y}\cos (\varphi _{x}-\varphi
_{x})\right\} \right\vert _{x=y} \\
& \;+\left. P_{\ell }^{\prime }(\left\langle x,y\right\rangle )\left\{ \cos
\theta _{x}\cos \theta _{y}+\sin \theta _{x}\sin \theta _{y}\cos (\varphi
_{x}-\varphi _{x})\right\} \right\vert _{x=y} \\
& =P_{\ell }^{\prime }(1).
\end{align*}
\end{proof}

Our next result is the following:

\begin{lemma}
\label{22-11}For all points $x=(\theta _{x},\varphi _{x})\in \mathbb{S}%
^{2}\setminus \{N,S\}$,
\begin{equation*}
\left. \mathbb{E}\left[ \partial _{22;x}f_{\ell }(x)\partial _{11;y}f_{\ell
}(y)\right] \right\vert _{x=y}=P_{\ell }^{\prime \prime }(1)+P_{\ell
}^{\prime }(1).
\end{equation*}
\end{lemma}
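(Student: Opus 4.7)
The cleanest route is to bypass the direct quadruple chain-rule computation by exploiting the eigenvalue equation $\Delta_{\mathbb{S}^2} f_\ell = -\lambda_\ell f_\ell$. Away from the poles, the Laplacian in spherical coordinates reads $\Delta_{\mathbb{S}^2} f = \partial_{11} f + \partial_{22} f + \cot\theta\, \partial_1 f$, so pointwise (almost surely)
\begin{equation*}
\partial_{22;x} f_\ell(x) = -\lambda_\ell f_\ell(x) - \partial_{11;x} f_\ell(x) - \cot\theta_x\, \partial_{1;x} f_\ell(x).
\end{equation*}
Multiplying by $\partial_{11;y} f_\ell(y)$ and taking expectations, the target covariance splits into three pieces, two of which are already known.

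Specifically, Lemma \ref{11-11} gives $\mathbb{E}[\partial_{11;x} f_\ell(x)\, \partial_{11;y} f_\ell(y)]|_{x=y} = 3 P_\ell''(1) + P_\ell'(1)$, and Lemma \ref{11-1;11-2} (with the roles of $x$ and $y$ interchanged, which only relabels the calculation) gives $\mathbb{E}[\partial_{1;x} f_\ell(x)\, \partial_{11;y} f_\ell(y)]|_{x=y} = 0$. The only genuinely new piece is $\mathbb{E}[f_\ell(x)\, \partial_{11;y} f_\ell(y)]|_{x=y} = \partial_{11;y} P_\ell(\langle x,y\rangle)|_{x=y}$, which is a short direct chain-rule computation: the first derivative $\partial\langle x,y\rangle/\partial\theta_y$ vanishes on the diagonal, while $\partial^2\langle x,y\rangle/\partial\theta_y^2|_{x=y} = -(\cos^2\theta_x + \sin^2\theta_x) = -1$, yielding
\begin{equation*}
\mathbb{E}[f_\ell(x)\, \partial_{11;y} f_\ell(y)]|_{x=y} = -P_\ell'(1).
\end{equation*}

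Combining the three contributions gives
\begin{equation*}
\mathbb{E}[\partial_{22;x} f_\ell(x)\, \partial_{11;y} f_\ell(y)]|_{x=y} = \lambda_\ell P_\ell'(1) - 3P_\ell''(1) - P_\ell'(1),
\end{equation*}
and a short algebraic check using $P_\ell'(1) = \lambda_\ell/2$ and $P_\ell''(1) = \lambda_\ell(\lambda_\ell-2)/8$ shows this equals $\lambda_\ell(\lambda_\ell+2)/8 = P_\ell''(1) + P_\ell'(1)$, as claimed. Note that the $\cot\theta_x$ factor, which would otherwise introduce a location-dependent term, is killed by the vanishing of $\mathbb{E}[\partial_{1;x} f_\ell(x)\, \partial_{11;y} f_\ell(y)]|_{x=y}$; this cancellation is what guarantees translation invariance of the variance.

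The main obstacle would be if one insisted on the brute-force route taken in Lemma \ref{11-11}: expanding $\partial_{22;x}\partial_{11;y} P_\ell(\langle x,y\rangle)$ generates roughly a dozen chain-rule terms involving $P_\ell'''$ and $P_\ell''''$ at $\langle x,y\rangle = 1$ together with trigonometric prefactors, and the bookkeeping required to see that all the higher-order derivative contributions collapse is substantial. The Laplacian trick completely sidesteps this, reducing the lemma to one new scalar computation plus the reuse of Lemmas \ref{11-11} and \ref{11-1;11-2}.
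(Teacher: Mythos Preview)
Your proof is correct and takes a genuinely different route from the paper's. The paper proceeds by direct brute-force expansion of $\partial_{22;x}\partial_{11;y}P_\ell(\langle x,y\rangle)$, grouping the resulting chain-rule terms into four blocks $A+B+C+D$ and evaluating each at the diagonal to obtain $0+0+P_\ell''(1)+P_\ell'(1)$. You instead invoke the eigenvalue relation $\Delta_{\mathbb{S}^2}f_\ell=-\lambda_\ell f_\ell$ to write $\partial_{22;x}f_\ell$ as a linear combination of $f_\ell$, $\partial_{11;x}f_\ell$ and $\partial_{1;x}f_\ell$, reducing the target covariance to three pieces: two supplied by Lemmas~\ref{11-11} and~\ref{11-1;11-2}, and a single short computation of $\mathbb{E}[f_\ell(x)\partial_{11;y}f_\ell(y)]|_{x=y}=-P_\ell'(1)$. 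The algebraic verification $\lambda_\ell P_\ell'(1)-3P_\ell''(1)-P_\ell'(1)=P_\ell''(1)+P_\ell'(1)$ via the explicit values $P_\ell'(1)=\lambda_\ell/2$, $P_\ell''(1)=\lambda_\ell(\lambda_\ell-2)/8$ is correct. Interestingly, the paper itself flags exactly this kind of shortcut in Remark~2.2 (for the companion quantity $\mathrm{Var}(\partial_{22}f_\ell+\cot\theta\,\partial_1 f_\ell)$), but chooses the direct approach for the formal proofs in Appendix~B, presumably to keep each lemma self-contained. Your route is shorter and cleanly isolates the reason why no $\cot\theta_x$ term survives; the paper's route has the virtue of being independent of the other lemmas and of not needing the numerical values of $P_\ell'(1)$, $P_\ell''(1)$.
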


\begin{proof}
Again, evaluation of derivatives gives
\begin{align*}
& \left. \mathbb{E}\left[ \partial _{22;x}f_{\ell }(x)\partial
_{11;y}f_{\ell }(y)\right] \right\vert _{x=y} \\
& =\left. \partial _{2;x}P_{\ell }^{\prime \prime \prime }(\left\langle
x,y\right\rangle )\left\{ -\sin \theta _{y}\sin (\varphi _{x}-\varphi
_{x})\right\} \left\{ -\cos \theta _{x}\sin \theta _{y}+\sin \theta _{x}\cos
\theta _{y}\cos (\varphi _{x}-\varphi _{y})\right\} ^{2}\right\vert _{x=y} \\
& \;+\left. \partial _{2;x}P_{\ell }^{\prime \prime }(\left\langle
x,y\right\rangle )2\left\{ -\cos \theta _{x}\sin \theta _{y}+\sin \theta
_{x}\cos \theta _{y}\cos (\varphi _{x}-\varphi _{y})\right\} \left\{ -\cos
\theta _{y}\sin (\varphi _{x}-\varphi _{y})\right\} \right\vert \\
& \;+\left. \partial _{2;x}P_{\ell }^{\prime \prime }(\left\langle
x,y\right\rangle )\left\{ -\sin \theta _{y}\sin (\varphi _{x}-\varphi
_{y})\right\} \left\{ -\cos \theta _{x}\cos \theta _{y}-\sin \theta _{x}\sin
\theta _{y}\cos (\varphi _{x}-\varphi _{y})\right\} \right\vert _{x=y} \\
& \;+\left. \partial _{2;x}P_{\ell }^{\prime }(\left\langle x,y\right\rangle
)\left\{ \sin \theta _{y}\sin (\varphi _{x}-\varphi _{y})\right\}
\right\vert _{x=y} \\
& =A+B+C+D,
\end{align*}%
where
\begin{align*}
A& =\frac{1}{\sin \theta _{x}}P_{\ell }^{\prime \prime \prime \prime
}(\left\langle x,y\right\rangle )\left\{ -\sin \theta _{y}\sin \theta
_{x}\sin (\varphi _{x}-\varphi _{x})\right\} \\
& \;\times \left\{ -\sin \theta _{y}\sin (\varphi _{x}-\varphi _{x})\right\}
\left\{ -\cos \theta _{x}\sin \theta _{y}+\sin \theta _{x}\cos \theta
_{y}\cos (\varphi _{x}-\varphi _{y})\right\} ^{2}|_{x=y} \\
& \;+\frac{1}{\sin \theta _{x}}\left. P_{\ell }^{\prime \prime \prime
}(\left\langle x,y\right\rangle )\left\{ -\sin \theta _{y}\cos (\varphi
_{x}-\varphi _{x})\right\} \left\{ -\cos \theta _{x}\sin \theta _{y}+\sin
\theta _{x}\cos \theta _{y}\cos (\varphi _{x}-\varphi _{y})\right\}
^{2}\right\vert _{x=y} \\
& \;+\frac{1}{\sin \theta _{x}}\left. P_{\ell }^{\prime \prime \prime
}(\left\langle x,y\right\rangle )\left\{ -\sin \theta _{y}\cos (\varphi
_{x}-\varphi _{x})\right\} \left\{ -\cos \theta _{x}\sin \theta _{y}+\sin
\theta _{x}\cos \theta _{y}\cos (\varphi _{x}-\varphi _{y})\right\}
^{2}\right\vert _{x=y} \\
& =0,
\end{align*}%
\begin{align*}
B& =\left. P_{\ell }^{\prime \prime \prime }(\left\langle x,y\right\rangle
)\left\{ -\sin \theta _{y}\sin (\varphi _{x}-\varphi _{y})\right\} 2\left\{
-\cos \theta _{x}\sin \theta _{y}+\sin \theta _{x}\cos \theta _{y}\cos
(\varphi _{x}-\varphi _{y})\right\} \left\{ -\cos \theta _{y}\sin (\varphi
_{x}-\varphi _{y})\right\} \right\vert _{x=y} \\
& \;+\left. P_{\ell }^{\prime \prime }(\left\langle x,y\right\rangle
)2\left\{ -\cos \theta _{y}\sin (\varphi _{x}-\varphi _{y})\right\} \left\{
-\cos \theta _{y}\sin (\varphi _{x}-\varphi _{y})\right\} \right\vert _{x=y}
\\
& \;+\frac{1}{\sin \theta _{x}}\left. P_{\ell }^{\prime \prime
}(\left\langle x,y\right\rangle )2\left\{ -\cos \theta _{x}\sin \theta
_{y}+\sin \theta _{x}\cos \theta _{y}\cos (\varphi _{x}-\varphi
_{y})\right\} \left\{ -\cos \theta _{y}\cos (\varphi _{x}-\varphi
_{y})\right\} \right\vert _{x=y} \\
& =0,
\end{align*}%
\begin{align*}
C& =P_{\ell }^{\prime \prime \prime }(\left\langle x,y\right\rangle )\left\{
-\sin \theta _{y}\sin (\varphi _{x}-\varphi _{y})\right\} ^{2}\left\{ -\cos
\theta _{x}\cos \theta _{y}-\sin \theta _{x}\sin \theta _{y}\cos (\varphi
_{x}-\varphi _{y})\right\} |_{x=y} \\
& \;+\frac{1}{\sin \theta _{x}}\left. P_{\ell }^{\prime \prime
}(\left\langle x,y\right\rangle )\left\{ -\sin \theta _{y}\cos (\varphi
_{x}-\varphi _{y})\right\} \left\{ -\cos \theta _{x}\cos \theta _{y}-\sin
\theta _{x}\sin \theta _{y}\cos (\varphi _{x}-\varphi _{y})\right\}
\right\vert _{x=y} \\
& \;+\left. \partial _{2;x}P_{\ell }^{\prime \prime }(\left\langle
x,y\right\rangle )\left\{ -\sin \theta _{y}\sin (\varphi _{x}-\varphi
_{y})\right\} \left\{ \sin \theta _{y}\sin (\varphi _{x}-\varphi
_{y})\right\} \right\vert _{x=y} \\
& =P_{\ell }^{\prime \prime }(1),
\end{align*}%
and finally
\begin{align*}
D& =\frac{1}{\sin \theta _{x}}P_{\ell }^{\prime \prime }(\left\langle
x,y\right\rangle )\left\{ -\sin \theta _{x}\sin \theta _{y}\sin (\varphi
_{x}-\varphi _{y})\right\} \left\{ \sin \theta _{y}\sin (\varphi
_{x}-\varphi _{y})\right\} |_{x=y} \\
& \;+\frac{1}{\sin \theta _{x}}P_{\ell }^{\prime }(\left\langle
x,y\right\rangle )\left\{ \sin \theta _{y}\cos (\varphi _{x}-\varphi
_{y})\right\} |_{x=y} \\
& =P_{\ell }^{\prime }(1).
\end{align*}
\end{proof}

\begin{lemma}
\label{21-22}For all points $x=(\theta _{x},\varphi _{x})\in \mathbb{S}%
^{2}\setminus \{N,S\}$,
\begin{equation*}
\left. \mathbb{E}\left[ \partial _{21;x}f_{\ell }(x)\partial _{22;y}f_{\ell
}(y)\right] \right\vert _{x=y}=0.
\end{equation*}
\end{lemma}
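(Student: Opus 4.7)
\textbf{Proof plan for Lemma \ref{21-22}.}

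The plan is to exploit a simple parity argument in the azimuthal angle, which bypasses the brute-force fourth-order Fa\`a di Bruno expansion used in the neighbouring Lemmas. First I would write, using the covariance identity $\mathbb{E}[f_\ell(x)f_\ell(y)] = P_\ell(\langle x,y\rangle)$ together with the definitions of $\partial_{21}$ and $\partial_{22}$,
\begin{equation*}
\mathbb{E}\!\left[\partial_{21;x}f_\ell(x)\,\partial_{22;y}f_\ell(y)\right]\Big|_{x=y}
\;=\;\frac{1}{\sin\theta_x\,\sin^2\theta_y}\,
\partial_{\theta_x}\partial_{\varphi_x}\partial_{\varphi_y}^{2}\,
P_\ell\!\left(\langle x,y\rangle\right)\Big|_{x=y}.
\end{equation*}
Since $\theta_x,\theta_y\in(0,\pi)$ by assumption, the prefactor is finite, so it suffices to show that the fourth-order partial derivative of $P_\ell(\langle x,y\rangle)$ above vanishes on the diagonal $x=y$.

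The key observation is that the inner product
$\langle x,y\rangle=\cos\theta_x\cos\theta_y+\sin\theta_x\sin\theta_y\cos(\varphi_x-\varphi_y)$
depends on the azimuths only through $\psi:=\varphi_x-\varphi_y$, and is an \emph{even} function of $\psi$ (as $\cos$ is even). Consequently, $G(\theta_x,\theta_y,\psi):=P_\ell(\langle x,y\rangle)$ is a smooth function even in $\psi$. Moreover, acting on any function of $\psi$, we have $\partial_{\varphi_x}=\partial_\psi$ and $\partial_{\varphi_y}=-\partial_\psi$, so that
\begin{equation*}
\partial_{\varphi_x}\partial_{\varphi_y}^{2}\, G(\theta_x,\theta_y,\psi)
\;=\;\partial_\psi\,(-\partial_\psi)^{2}\,G(\theta_x,\theta_y,\psi)
\;=\;\partial_\psi^{3}\,G(\theta_x,\theta_y,\psi).
\end{equation*}
Since $G$ is even in $\psi$, the third derivative $\partial_\psi^{3}G$ is odd in $\psi$, and therefore $\partial_\psi^{3}G(\theta_x,\theta_y,0)\equiv 0$ for every $(\theta_x,\theta_y)$.

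To conclude, I would apply $\partial_{\theta_x}$ to both sides of this identity. Because $\partial_\psi^{3}G(\theta_x,\theta_y,0)$ is the zero function of $(\theta_x,\theta_y)$, its derivative with respect to $\theta_x$ is also zero; equivalently, smoothness of $P_\ell$ permits interchanging $\partial_{\theta_x}$ and evaluation at $\psi=0$. Hence
\begin{equation*}
\partial_{\theta_x}\partial_{\varphi_x}\partial_{\varphi_y}^{2}\,P_\ell(\langle x,y\rangle)\Big|_{\varphi_x=\varphi_y}=0
\quad\text{for every }\theta_x,\theta_y,
\end{equation*}
and in particular at $x=y$. Dividing by the nonzero factor $\sin\theta_x\sin^2\theta_y$ yields the claim. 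The only potential obstacle is that one must verify the symbolic commutation/interchange carefully, but this is entirely routine given the $C^{\infty}$-smoothness of $P_\ell$. (For readers preferring the computational style of the surrounding lemmas, the same conclusion can alternatively be reached by expanding $\partial_{\theta_x}\partial_{\varphi_x}\partial_{\varphi_y}^{2}P_\ell(u)$ by Fa\`a di Bruno and checking that every surviving summand carries either a factor of $u_{\varphi_y}$, $u_{\varphi_x}$, $u_{\theta_x}$, or $u_{\theta_x\varphi_y}$, each of which vanishes at $x=y$.)
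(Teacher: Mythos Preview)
Your argument is correct and takes a genuinely different route from the paper's proof. The paper proceeds by brute force: it expands $\partial_{21;x}\partial_{22;y}P_\ell(\langle x,y\rangle)$ via iterated chain rule into a sum of ten terms, each containing an explicit trigonometric factor, and then checks one by one that every surviving summand carries a factor of $\sin(\varphi_x-\varphi_y)$, which vanishes on the diagonal. Your parity argument short-circuits all of this: since $\langle x,y\rangle$ depends on the azimuths only through $\cos(\varphi_x-\varphi_y)$, the covariance is even in $\psi=\varphi_x-\varphi_y$, so any odd total number of $\varphi$-derivatives yields zero at $\psi=0$, identically in $(\theta_x,\theta_y)$; the remaining $\partial_{\theta_x}$ then acts on the zero function. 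The commutation of $\partial_{\theta_x}$ with evaluation at $\psi=0$ is indeed routine by smoothness, as you note.

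What your approach buys is generality and economy: the same argument proves at once every vanishing statement in Appendix~B in which the two operators together involve an odd number of $\varphi$-derivatives, e.g.\ Lemma~\ref{11-21}, the second identity in Lemma~\ref{1-1;2-2;1-2}, the second identity in Lemma~\ref{11-1;11-2}, the first identity in Lemma~\ref{12-1;12-2}, and the second identity in Lemma~\ref{22-1;22-2}. The paper's term-by-term expansion, while less elegant here, is the same machinery used for the non-vanishing covariances (where an actual value must be extracted), so its virtue is uniformity of method across all the lemmas.
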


\begin{proof}
We have that
\begin{align*}
& \left. \mathbb{E}\left[ \partial _{21;x}f_{\ell }(x)\partial
_{22;y}f_{\ell }(y)\right] \right\vert _{x=y} \\
& =\left. \frac{1}{\sin \theta _{x}}P_{\ell }^{\prime \prime \prime \prime
}(\left\langle x,y\right\rangle )\left\{ -\sin \theta _{x}\cos \theta
_{y}+\cos \theta _{x}\sin \theta _{y}\cos (\varphi _{x}-\varphi
_{y})\right\} ^{2}\left\{ \sin ^{2}\theta _{x}\sin ^{2}(\varphi _{x}-\varphi
_{y})\right\} \right\vert _{x=y} \\
& +\left. \frac{1}{\sin \theta _{x}}P_{\ell }^{\prime \prime \prime
}(\left\langle x,y\right\rangle )\left\{ -\sin \theta _{x}\cos \theta
_{y}+\cos \theta _{x}\sin \theta _{y}\sin (\varphi _{x}-\varphi
_{y})\right\} \left\{ \sin ^{2}\theta _{x}\sin ^{2}(\varphi _{x}-\varphi
_{y})\right\} \right\vert _{x=y} \\
& +\left. \frac{1}{\sin \theta _{x}}P_{\ell }^{\prime \prime \prime
}(\left\langle x,y\right\rangle )\left\{ -\sin \theta _{x}\cos \theta
_{y}+\cos \theta _{x}\sin \theta _{y}\cos (\varphi _{x}-\varphi
_{y})\right\} \left\{ \sin ^{2}\theta _{x}2\sin (\varphi _{x}-\varphi
_{y})\cos (\varphi _{x}-\varphi _{y})\right\} \right\vert _{x=y} \\
& +\left. \frac{1}{\sin \theta _{x}}P_{\ell }^{\prime \prime \prime
}(\left\langle x,y\right\rangle )\left\{ -\sin \theta _{x}\cos \theta
_{y}+\cos \theta _{x}\sin \theta _{y}\cos (\varphi _{x}-\varphi
_{y})\right\} \left\{ 2\sin \theta _{x}\cos \theta _{x}\sin ^{2}(\varphi
_{x}-\varphi _{y})\right\} \right\vert _{x=y} \\
& +\left. \frac{1}{\sin \theta _{x}}P_{\ell }^{\prime \prime }(\left\langle
x,y\right\rangle )\left\{ 2\sin \theta _{x}\cos \theta _{x}2\sin (\varphi
_{x}-\varphi _{y})\cos (\varphi _{x}-\varphi _{y})\right\} \right\vert _{x=y}
\\
& -\left. \frac{1}{\sin \theta _{x}}\frac{1}{\sin \theta _{y}}P_{\ell
}^{\prime \prime \prime }(\left\langle x,y\right\rangle )\left\{ -\sin
\theta _{x}\cos \theta _{y}+\cos \theta _{x}\sin \theta _{y}\cos (\varphi
_{x}-\varphi _{y})\right\} ^{2}\sin \theta _{x}\cos (\varphi _{x}-\varphi
_{x})\right\vert _{x=y} \\
& -\left. \frac{1}{\sin \theta _{x}}\frac{1}{\sin \theta _{y}}P_{\ell
}^{\prime \prime }(\left\langle x,y\right\rangle )\left\{ -\sin \theta
_{x}\cos \theta _{y}-\cos \theta _{x}\sin \theta _{y}\sin (\varphi
_{x}-\varphi _{y})\right\} \sin \theta _{x}\cos (\varphi _{x}-\varphi
_{x})\right\vert _{x=y} \\
& +\left. \frac{1}{\sin \theta _{x}}\frac{1}{\sin \theta _{y}}P_{\ell
}^{\prime \prime }(\left\langle x,y\right\rangle )\left\{ -\sin \theta
_{x}\cos \theta _{y}+\cos \theta _{x}\sin \theta _{y}\cos (\varphi
_{x}-\varphi _{y})\right\} \sin \theta _{x}\sin (\varphi _{x}-\varphi
_{x})\right\vert _{x=y} \\
& -\left. \frac{1}{\sin \theta _{x}}\frac{1}{\sin \theta _{y}}P_{\ell
}^{\prime \prime }(\left\langle x,y\right\rangle )\left\{ -\sin \theta
_{x}\cos \theta _{y}+\cos \theta _{x}\sin \theta _{y}\cos (\varphi
_{x}-\varphi _{y})\right\} \cos \theta _{x}\cos (\varphi _{x}-\varphi
_{x})\right\vert _{x=y} \\
& +\left. \frac{1}{\sin \theta _{x}}\frac{1}{\sin \theta _{y}}P_{\ell
}^{\prime }(\left\langle x,y\right\rangle )\cos \theta _{x}\sin (\varphi
_{x}-\varphi _{x})\right\vert _{x=y},
\end{align*}%
so that for all $x\in \mathbb{S}^{2}$,
\begin{equation*}
\left. \mathbb{E}\left[ \partial _{21;x}T_{\ell }(x)\partial _{22;y}T_{\ell
}(y)\right] \right\vert _{x=y}=0,
\end{equation*}%
as claimed.
\end{proof}

The next variance is more delicate:

\begin{lemma}
\label{22-22}For all points $x=(\theta _{x},\varphi _{x})\in \mathbb{S}%
^{2}\setminus \{N,S\}$,
\begin{equation*}
\left. \mathbb{E}\left[ \partial _{22;x}f_{\ell }(x)\partial _{22;y}f_{\ell
}(y)\right] \right\vert _{x=y}=3P_{\ell }^{\prime \prime }(1)+P_{\ell
}^{\prime }(1)+\cot ^{2}\theta _{x}P_{\ell }^{\prime }(1)\text{ }.
\end{equation*}
\end{lemma}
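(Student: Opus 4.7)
The plan is a direct fourth-order derivative computation, parallel in spirit to the earlier covariance lemmas but exploiting the fact that both differentiations are in the $\varphi$ variables. Since $\partial_{2;z}=(\sin\theta_z)^{-1}\partial_{\varphi_z}$, we have $\partial_{22;z}=(\sin\theta_z)^{-2}\partial_{\varphi_z}^2$, so by \eqref{cov},
\[
\mathbb{E}\bigl[\partial_{22;x}f_\ell(x)\,\partial_{22;y}f_\ell(y)\bigr]=\frac{1}{\sin^2\theta_x\sin^2\theta_y}\,\partial_{\varphi_x}^2\partial_{\varphi_y}^2 P_\ell(c),
\]
where $c=c(x,y):=\cos\theta_x\cos\theta_y+\sin\theta_x\sin\theta_y\cos(\varphi_x-\varphi_y)$. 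The first step is to tabulate the $\varphi$-partial derivatives of $c$ and make the parity observation that every such partial involving an odd total number of $\varphi_x,\varphi_y$ differentiations carries a factor $\sin(\varphi_x-\varphi_y)$ and therefore vanishes on the diagonal $x=y$. The only non-vanishing diagonal values through order four are
\[
c|_{x=y}=1,\qquad \partial_{\varphi_x}^2c|_{x=y}=\partial_{\varphi_y}^2c|_{x=y}=-\sin^2\theta_x,\qquad \partial_{\varphi_x}\partial_{\varphi_y}c|_{x=y}=\partial_{\varphi_x}^2\partial_{\varphi_y}^2c|_{x=y}=\sin^2\theta_x.
\]

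The second step is to expand $\partial_{\varphi_x}^2\partial_{\varphi_y}^2 P_\ell(c)$ via the Faà di Bruno formula. Schematically, terms of the form $P_\ell^{(k)}(c)\cdot(\text{product of }\varphi\text{-partials of }c)$ appear, but by the parity remark every term containing an odd-order $\varphi$-partial of $c$ vanishes at $x=y$. Only three contributions survive on the diagonal, corresponding to the two set-partitions $\{\{1,3\},\{2,4\}\}$ and $\{\{1,4\},\{2,3\}\}$ of $\{\varphi_x,\varphi_x,\varphi_y,\varphi_y\}$ (which combine into a factor $2$), the partition $\{\{1,2\},\{3,4\}\}$, and the single block $\{\{1,2,3,4\}\}$:
\[
\partial_{\varphi_x}^2\partial_{\varphi_y}^2 P_\ell(c)\bigm|_{x=y}=2P_\ell''(1)\bigl(\partial_{\varphi_x}\partial_{\varphi_y}c|_{x=y}\bigr)^2+P_\ell''(1)\,\partial_{\varphi_x}^2c|_{x=y}\,\partial_{\varphi_y}^2c|_{x=y}+P_\ell'(1)\,\partial_{\varphi_x}^2\partial_{\varphi_y}^2c|_{x=y}.
\]
Substituting the values from the first step yields $3P_\ell''(1)\sin^4\theta_x+P_\ell'(1)\sin^2\theta_x$; dividing by $\sin^4\theta_x$ and using $\csc^2\theta_x=1+\cot^2\theta_x$ produces $3P_\ell''(1)+P_\ell'(1)+\cot^2\theta_x\,P_\ell'(1)$, as claimed.

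The main bookkeeping hurdle is the systematic verification that every term of the fourth-order Faà di Bruno expansion containing an odd-order $\varphi$-partial of $c$ indeed vanishes on the diagonal, together with correctly counting the combinatorial multiplicity $2$ that arises from the two distinct two-block set-partitions producing the factor $(\partial_{\varphi_x}\partial_{\varphi_y}c)^2$. Once this parity observation and the small combinatorial count are made explicit, the calculation collapses to the three non-zero summands above and the remaining manipulation is an application of $\csc^2\theta_x=1+\cot^2\theta_x$.
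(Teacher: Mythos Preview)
Your proof is correct and follows essentially the same direct-computation approach as the paper: both evaluate $\partial_{\varphi_x}^2\partial_{\varphi_y}^2 P_\ell(c)$ on the diagonal and divide by $\sin^4\theta_x$. The paper carries out the iterated product rule term by term, grouping the output into four blocks $A+B+C+D$; your Fa\`a di Bruno/parity organization is a tidier way to see in advance that only the three contributions $2P_\ell''(1)(\partial_{\varphi_x}\partial_{\varphi_y}c)^2$, $P_\ell''(1)\,\partial_{\varphi_x}^2c\,\partial_{\varphi_y}^2c$, and $P_\ell'(1)\,\partial_{\varphi_x}^2\partial_{\varphi_y}^2c$ survive, but the underlying calculation is the same.
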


\begin{proof}
Here we obtain%
\begin{align*}
& \left. \mathbb{E}\left[ \partial _{22;x}f_{\ell }(x)\partial
_{22;y}f_{\ell }(y)\right] \right\vert _{x=y} \\
& =\partial _{22;x}\frac{1}{\sin ^{2}\theta _{y}}P_{\ell }^{\prime \prime
}(\left\langle x,y\right\rangle )\left\{ \sin ^{2}\theta _{x}\sin ^{2}\theta
_{y}\sin ^{2}(\varphi _{x}-\varphi _{y})\right\} |_{x=y} \\
& \;-\partial _{22;x}\frac{1}{\sin ^{2}\theta _{y}}P_{\ell }^{\prime
}(\left\langle x,y\right\rangle )\sin \theta _{x}\sin \theta _{y}\cos
(\varphi _{x}-\varphi _{y})|_{x=y} \\
& =A+B+C+D,
\end{align*}%
for
\begin{align*}
A& =P_{\ell }^{\prime \prime \prime \prime }(\left\langle x,y\right\rangle
)\left\{ -\sin \theta _{y}\sin (\varphi _{x}-\varphi _{y})\right\}
^{2}\left\{ \sin ^{2}\theta _{x}\sin ^{2}(\varphi _{x}-\varphi _{y})\right\}
|_{x=y} \\
& \;+\frac{1}{\sin \theta _{x}}P_{\ell }^{\prime \prime \prime
}(\left\langle x,y\right\rangle )\left\{ -\sin \theta _{y}\cos (\varphi
_{x}-\varphi _{y})\right\} \left\{ \sin ^{2}\theta _{x}\sin ^{2}(\varphi
_{x}-\varphi _{y})\right\} |_{x=y} \\
& \;+\frac{1}{\sin \theta _{x}}P_{\ell }^{\prime \prime \prime
}(\left\langle x,y\right\rangle )\left\{ -\sin \theta _{y}\sin (\varphi
_{x}-\varphi _{y})\right\} \left\{ 2\sin ^{2}\theta _{x}\sin (\varphi
_{x}-\varphi _{y})\cos (\varphi _{x}-\varphi _{y})\right\} |_{x=y}=0,
\end{align*}%
\begin{align*}
B& =\left. P_{\ell }^{\prime \prime \prime }(\left\langle x,y\right\rangle
)\left\{ -\sin \theta _{y}\sin (\varphi _{x}-\varphi _{y})\right\} \left\{
2\sin \theta _{x}\sin (\varphi _{x}-\varphi _{y})\cos (\varphi _{x}-\varphi
_{y})\right\} \right\vert _{x=y} \\
& \;+\left. P_{\ell }^{\prime \prime }(\left\langle x,y\right\rangle
)\left\{ 2\cos (\varphi _{x}-\varphi _{y})\cos (\varphi _{x}-\varphi
_{y})\right\} \right\vert _{x=y} \\
& \;-\left. P_{\ell }^{\prime \prime }(\left\langle x,y\right\rangle
)\left\{ 2\sin (\varphi _{x}-\varphi _{y})\sin (\varphi _{x}-\varphi
_{y})\right\} \right\vert _{x=y}=2P_{\ell }^{\prime \prime }(1),
\end{align*}%
\begin{align*}
C& =-\frac{1}{\sin \theta _{y}}P_{\ell }^{\prime \prime \prime
}(\left\langle x,y\right\rangle )\left\{ -\sin \theta _{y}\sin (\varphi
_{x}-\varphi _{y})\right\} ^{2}\sin \theta _{x}\cos (\varphi _{x}-\varphi
_{y}) \\
& \;+\frac{1}{\sin \theta _{y}}P_{\ell }^{\prime \prime }(\left\langle
x,y\right\rangle )\left\{ \sin \theta _{y}\cos (\varphi _{x}-\varphi
_{y})\right\} \cos (\varphi _{x}-\varphi _{y})|_{x=y} \\
& \;-\frac{1}{\sin \theta _{y}}P_{\ell }^{\prime \prime }(\left\langle
x,y\right\rangle )\left\{ \sin \theta _{y}\sin (\varphi _{x}-\varphi
_{y})\right\} \sin (\varphi _{x}-\varphi _{y})|_{x=y}=P_{\ell }^{\prime
\prime }(1).
\end{align*}%
\begin{align*}
D& =-\frac{1}{\sin \theta _{y}}P_{\ell }^{\prime \prime }(\left\langle
x,y\right\rangle )\sin \theta _{y}\sin (\varphi _{x}-\varphi _{y})\sin
(\varphi _{x}-\varphi _{y})|_{x=y} \\
& \;+\frac{1}{\sin \theta _{x}}\frac{1}{\sin \theta _{y}}P_{\ell }^{\prime
}(\left\langle x,y\right\rangle )\cos (\varphi _{x}-\varphi _{y})|_{x=y}=%
\frac{1}{\sin ^{2}\theta _{x}}P_{\ell }^{\prime }(1).
\end{align*}%
Summing up, we obtain
\begin{equation*}
\left. \mathbb{E}\left[ \partial _{22;x}T_{\ell }(x)\partial _{22;y}T_{\ell
}(y)\right] \right\vert _{x=y}=3P_{\ell }^{\prime \prime }(1)+\frac{1}{\sin
^{2}\theta _{x}}P_{\ell }^{\prime }(1)=3P_{\ell }^{\prime \prime
}(1)+P_{\ell }^{\prime }(1)+\cot ^{2}\theta _{x}P_{\ell }^{\prime }(1).
\end{equation*}
\end{proof}

\begin{lemma}
\label{11-21}For all points $x=(\theta _{x},\varphi _{x})\in \mathbb{S}%
^{2}\setminus \{N,S\}$,
\begin{equation*}
\left. \mathbb{E}\left[ \partial _{11;x}f_{\ell }(x)\partial _{21;y}f_{\ell
}(y)\right] \right\vert _{x=y}=0.
\end{equation*}
\end{lemma}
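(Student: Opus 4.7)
The plan is to exploit a parity argument in the azimuthal angle difference $\psi := \varphi_{x}-\varphi_{y}$. The covariance
$$\mathbb{E}[f_{\ell}(x) f_{\ell}(y)] = P_{\ell}\bigl(\cos\theta_{x}\cos\theta_{y}+\sin\theta_{x}\sin\theta_{y}\cos\psi\bigr)$$
depends on the azimuthal variables only through $\cos\psi$, and is therefore a smooth even function of $\psi$ (for each fixed pair $\theta_{x},\theta_{y}$).

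Next I would observe how the two differential operators act on this parity. The operator $\partial_{11;x}=\partial_{\theta_{x}}^{2}$ involves no azimuthal derivatives and so preserves the even parity in $\psi$. The operator $\partial_{21;y}=\frac{1}{\sin\theta_{y}}\partial_{\varphi_{y}}\partial_{\theta_{y}}$ contains exactly one derivative $\partial_{\varphi_{y}}=-\partial_{\psi}$, which converts an even function of $\psi$ into an odd one, while the factor $\frac{1}{\sin\theta_{y}}$ and the derivative $\partial_{\theta_{y}}$ do not affect the $\psi$-parity. Consequently,
$$H(\theta_{x},\theta_{y},\psi) := \partial_{11;x}\partial_{21;y}\,P_{\ell}(\langle x,y\rangle)$$
is a smooth odd function of $\psi$.

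Evaluating at $x=y$ forces $\theta_{x}=\theta_{y}$ and, in particular, $\psi=0$. Any continuous odd function of $\psi$ vanishes at the origin, and therefore
$$\left.\mathbb{E}\bigl[\partial_{11;x}f_{\ell}(x)\,\partial_{21;y}f_{\ell}(y)\bigr]\right|_{x=y} = H(\theta_{x},\theta_{x},0) = 0,$$
as claimed. The main obstacle, if one avoided the parity observation, would be the bookkeeping in the explicit chain-rule expansion of the mixed third-order derivative of $P_{\ell}(\langle x,y\rangle)$, exactly the type of computation carried out term-by-term in Lemmas \ref{11-1;11-2} and \ref{22-22}. If a fully explicit verification were desired, one could mirror that approach and check that every term in the expansion contains an odd power of $\sin\psi$ (arising from the single $\partial_{\varphi_{y}}$), which kills it on the diagonal; the parity argument packages this cancellation cleanly.
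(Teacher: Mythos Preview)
Your parity argument is correct. The covariance $P_{\ell}(\langle x,y\rangle)$ depends on the azimuthal variables only through $\cos\psi$, the operators $\partial_{11;x}$ and $\partial_{\theta_y}$ preserve that evenness, and the single factor $\partial_{\varphi_y}$ in $\partial_{21;y}$ flips the parity, forcing the expression to vanish at $\psi=0$. This is a clean and complete proof.

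The paper proceeds differently. It does not carry out the explicit chain-rule expansion here; instead it observes that $\partial_{11}f_{\ell}=\partial_{1}(\partial_{1}f_{\ell})$ and $\partial_{21}f_{\ell}=\partial_{2}(\partial_{1}f_{\ell})$ are the two first-order derivatives of the auxiliary field $g:=\partial_{1}f_{\ell}$, which was shown in Lemma~\ref{1-1;2-2;1-2} to have constant variance $P_{\ell}'(1)$, and then appeals to the same mechanism that gave $\text{Cov}(\partial_{1}f_{\ell},\partial_{2}f_{\ell})=0$ there. Unpacking that appeal, one sees that the covariance of $g$ still depends on $\varphi_x,\varphi_y$ only through $\cos\psi$, so the paper's shortcut ultimately rests on exactly the parity structure you isolated. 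Your argument has the virtue of making the cancellation mechanism explicit and self-contained, without routing through the auxiliary field $g$; the paper's route, on the other hand, highlights the structural reason this lemma requires no new computation once Lemma~\ref{1-1;2-2;1-2} is in hand.
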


\begin{proof}
The result can be established by similar computations to those performed in
the other Lemmas; alternatively, in this case it follows immediately by
noting that $\left\{ \partial _{11}f_{\ell }(x)\right\} ,\left\{ \partial
_{21;y}f_{\ell }(x)\right\} $ are the partial derivatives $\partial
_{1},\partial _{2}$ of the constant variance field $\partial _{1}f_{\ell }$.
\end{proof}

Our final lemma is the following

\begin{lemma}
\label{21-21}For all points $x=(\theta _{x},\varphi _{x})\in \mathbb{S}%
^{2}\setminus \{N,S\}$,
\begin{equation*}
\left. \mathbb{E}\left[ \partial _{21;x}f_{\ell }(x)\partial _{21;y}f_{\ell
}(y)\right] \right\vert _{x=y}=P_{\ell }^{\prime \prime }(1)+P_{\ell
}^{\prime }(1)\frac{\cos ^{2}\theta _{x}}{\sin ^{2}\theta _{x}}.
\end{equation*}
\end{lemma}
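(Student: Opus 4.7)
The plan is a direct computation on the covariance kernel, paralleling the style of Lemmas \ref{22-22} and \ref{22-11}. Writing $\partial_{21;x} = \frac{1}{\sin\theta_x}\partial_{\theta_x}\partial_{\varphi_x}$ and using $\mathbb{E}[f_\ell(x)f_\ell(y)]=P_\ell(\langle x,y\rangle)$, one has
\begin{equation*}
\mathbb{E}\!\left[\partial_{21;x}f_\ell(x)\,\partial_{21;y}f_\ell(y)\right]
=\frac{1}{\sin\theta_x\sin\theta_y}\,
\frac{\partial^{4}P_\ell(\langle x,y\rangle)}{\partial\theta_x\,\partial\varphi_x\,\partial\theta_y\,\partial\varphi_y}.
\end{equation*}
Setting $u=\langle x,y\rangle$ and expanding the fourth derivative by the Fa\`a di Bruno formula produces a sum of terms of the form $P_\ell^{(k)}(u)\prod_i\partial^{\alpha_i}u$, indexed by partitions $\{\alpha_i\}$ of the four differentiation variables $\{\theta_x,\varphi_x,\theta_y,\varphi_y\}$.

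The crucial simplification comes from evaluating at $x=y$: here $u=1$, and since $u$ attains its maximum at $x=y$, all four first partials $u_{\theta_x},u_{\varphi_x},u_{\theta_y},u_{\varphi_y}$ vanish. Consequently every partition with a singleton block disappears, and only the partitions $\{\{\theta_x,\varphi_x,\theta_y,\varphi_y\}\}$ (contributing $P_\ell'(1)$) and the three pairings $(\theta_x\varphi_x)(\theta_y\varphi_y)$, $(\theta_x\theta_y)(\varphi_x\varphi_y)$, $(\theta_x\varphi_y)(\varphi_x\theta_y)$ (contributing $P_\ell''(1)$) can survive.

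A short direct computation from $u=\cos\theta_x\cos\theta_y+\sin\theta_x\sin\theta_y\cos(\varphi_x-\varphi_y)$ gives
\begin{equation*}
u_{\theta_x\varphi_x}\big|_{x=y}=u_{\theta_y\varphi_y}\big|_{x=y}=u_{\theta_x\varphi_y}\big|_{x=y}=u_{\varphi_x\theta_y}\big|_{x=y}=0,
\end{equation*}
so that two of the three pairings drop out because they carry a factor of $\sin(\varphi_x-\varphi_y)$. Of the $P_\ell''(1)$ contributions only $u_{\theta_x\theta_y}\,u_{\varphi_x\varphi_y}\big|_{x=y}=1\cdot\sin^{2}\theta_x$ remains, while the fourth mixed partial evaluates to
\begin{equation*}
u_{\theta_x\varphi_x\theta_y\varphi_y}\big|_{x=y}=\cos\theta_x\cos\theta_y\cos(\varphi_x-\varphi_y)\big|_{x=y}=\cos^{2}\theta_x.
\end{equation*}
Combining these two surviving contributions and dividing by $\sin\theta_x\sin\theta_y\big|_{x=y}=\sin^{2}\theta_x$ yields $P_\ell''(1)+P_\ell'(1)\,\cos^{2}\theta_x/\sin^{2}\theta_x$, as required.

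The main obstacle is the bookkeeping of the Fa\`a di Bruno expansion—one must systematically eliminate all partitions containing a first partial and correctly differentiate $\sin\theta_x\sin\theta_y\cos(\varphi_x-\varphi_y)$ four times without sign mistakes. Once the vanishing of first partials at the diagonal is invoked the entire computation collapses to the three explicit terms above, mirroring the pattern seen in the proofs of Lemmas \ref{22-22} and \ref{21-22}. An alternative (and perhaps cleaner) route is to use the identity $\partial_{21}f_\ell=\partial_1\partial_2f_\ell+\cot\theta\,\partial_2f_\ell$ and expand the expectation into four pieces, where the $\cot^{2}\theta_x P_\ell'(1)$ piece appears immediately from Lemma \ref{1-1;2-2;1-2} and the remaining pieces combine to $P_\ell''(1)$; but it relies on intermediate covariances of the same flavour and offers no real shortcut.
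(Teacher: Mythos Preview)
Your proof is correct. Both you and the paper compute the fourth mixed derivative of $P_\ell(\langle x,y\rangle)$ with respect to $\theta_x,\varphi_x,\theta_y,\varphi_y$ and evaluate at the diagonal, so the underlying computation is the same; the difference is purely organizational. The paper proceeds by brute force: it differentiates step by step, splits the result into five blocks $A,B,C,D,E$, and evaluates each at $x=y$ to find that only $C=P_\ell''(1)$ and $E=P_\ell'(1)\cot^{2}\theta_x$ survive. Your use of Fa\`a di Bruno together with the observation that $u=\langle x,y\rangle$ attains its maximum on the diagonal (so that all first partials $u_{\theta_x},u_{\varphi_x},u_{\theta_y},u_{\varphi_y}$ vanish there) is a genuine conceptual shortcut: it reduces the entire expansion to the single four-block partition and the three pairings before any trigonometry is done, and then a handful of easy second-derivative evaluations finish the job. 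This makes the bookkeeping much lighter and explains transparently \emph{why} only two terms survive, whereas the paper's computation verifies this term by term. The alternative route you sketch at the end (rewriting $\partial_{21}f_\ell=\partial_1\partial_2 f_\ell+\cot\theta\,\partial_2 f_\ell$ and expanding) is also valid and is in fact closer in spirit to how the paper assembles the covariance of the covariant Hessian in Section~\ref{GradHess}.
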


\begin{proof}
\begin{align*}
& \left. \mathbb{E}\left[ \partial _{21;x}f_{\ell }(x)\partial
_{21;y}f_{\ell }(y)\right] \right\vert _{x=y} \\
& =\left. \partial _{21;x}\partial _{2;y}\left\{ P_{\ell }^{\prime
}(\left\langle x,y\right\rangle )\left\{ -\cos \theta _{x}\sin \theta
_{y}+\sin \theta _{x}\cos \theta _{y}\cos (\varphi _{x}-\varphi
_{y})\right\} \right\} \right\vert _{x=y} \\
& =\left. \partial _{21;x}\left\{ P_{\ell }^{\prime \prime }(\left\langle
x,y\right\rangle )\left\{ \sin \theta _{x}\sin (\varphi _{x}-\varphi
_{y})\right\} \left\{ -\cos \theta _{x}\sin \theta _{y}+\sin \theta _{x}\cos
\theta _{y}\cos (\varphi _{x}-\varphi _{x})\right\} \right\} \right\vert
_{x=y} \\
& \;+\partial _{21;x}\left\{ P_{\ell }^{\prime }(\left\langle
x,y\right\rangle )\left\{ \frac{\sin \theta _{x}}{\sin \theta _{y}}\cos
\theta _{y}\sin (\varphi _{x}-\varphi _{y})\right\} \right\} |_{x=y} \\
& =A+B+C+D+E,
\end{align*}%
where
\begin{align*}
A& =P_{\ell }^{\prime \prime \prime \prime }(\left\langle x,y\right\rangle
)\left\{ -\sin \theta _{y}\sin (\varphi _{x}-\varphi _{y})\right\} \left\{
-\sin \theta _{x}\cos \theta _{y}+\cos \theta _{x}\sin \theta _{y}\cos
(\varphi _{x}-\varphi _{x})\right\} \\
& \;\times \left\{ \sin \theta _{x}\sin (\varphi _{x}-\varphi _{y})\right\}
\left\{ -\cos \theta _{x}\sin \theta _{y}+\sin \theta _{x}\cos \theta
_{y}\cos (\varphi _{x}-\varphi _{x})\right\} |_{x=y} \\
& \;+\left. \left\{ P_{\ell }^{\prime \prime \prime }(\left\langle
x,y\right\rangle )\left\{ -\cos \theta _{x}\sin \theta _{y}\sin (\varphi
_{x}-\varphi _{x})\right\} \left\{ \sin \theta _{x}\sin (\varphi
_{x}-\varphi _{y})\right\} \left\{ -\cos \theta _{x}\sin \theta _{y}+\sin
\theta _{x}\cos \theta _{y}\cos (\varphi _{x}-\varphi _{x})\right\} \right\}
\right\vert _{x=y} \\
& \;+\{P_{\ell }^{\prime \prime \prime }(\left\langle x,y\right\rangle
)\left\{ -\sin \theta _{x}\cos \theta _{y}+\cos \theta _{x}\sin \theta
_{y}\cos (\varphi _{x}-\varphi _{x})\right\} \left\{ \sin \theta _{x}\cos
(\varphi _{x}-\varphi _{y})\right\} \\
& \;\times \left\{ -\cos \theta _{x}\sin \theta _{y}+\sin \theta _{x}\cos
\theta _{y}\cos (\varphi _{x}-\varphi _{x})\right\} \}|_{x=y} \\
& \;+\{P_{\ell }^{\prime \prime \prime }(\left\langle x,y\right\rangle
)\left\{ -\sin \theta _{x}\cos \theta _{y}+\cos \theta _{x}\sin \theta
_{y}\cos (\varphi _{x}-\varphi _{x})\right\} \left\{ \sin \theta _{x}\sin
(\varphi _{x}-\varphi _{y})\right\} \left\{ -\cos \theta _{y}\sin (\varphi
_{x}-\varphi _{x})\right\} \}|_{x=y}=0,
\end{align*}%
\begin{align*}
B& =\left. \partial _{2;x}\left\{ P_{\ell }^{\prime \prime }(\left\langle
x,y\right\rangle )\left\{ \cos \theta _{x}\sin (\varphi _{x}-\varphi
_{y})\right\} \left\{ -\cos \theta _{x}\sin \theta _{y}+\sin \theta _{x}\cos
\theta _{y}\cos (\varphi _{x}-\varphi _{y})\right\} \right\} \right\vert
_{x=y} \\
& =\left. \left\{ P_{\ell }^{\prime \prime \prime }(\left\langle
x,y\right\rangle )\left\{ -\sin \theta _{y}\sin (\varphi _{x}-\varphi
_{x})\right\} \left\{ \cos \theta _{x}\sin (\varphi _{x}-\varphi
_{y})\right\} \left\{ -\cos \theta _{x}\sin \theta _{y}+\sin \theta _{x}\cos
\theta _{y}\cos (\varphi _{x}-\varphi _{y})\right\} \right\} \right\vert
_{x=y} \\
& \;+\left. \left\{ P_{\ell }^{\prime \prime }(\left\langle x,y\right\rangle
)\left\{ \frac{\cos \theta _{x}}{\sin \theta _{x}}\cos (\varphi _{x}-\varphi
_{y})\right\} \left\{ -\cos \theta _{x}\sin \theta _{y}+\sin \theta _{x}\cos
\theta _{y}\cos (\varphi _{x}-\varphi _{y})\right\} \right\} \right\vert
_{x=y} \\
& \;+\left. \left\{ P_{\ell }^{\prime \prime }(\left\langle x,y\right\rangle
)\left\{ \cos \theta _{x}\sin (\varphi _{x}-\varphi _{y})\right\} \left\{
\cos \theta _{y}\sin (\varphi _{x}-\varphi _{y})\right\} \right\}
\right\vert _{x=y}=0,
\end{align*}%
\begin{align*}
C& =\left. \partial _{2;x}\left\{ P_{\ell }^{\prime \prime }(\left\langle
x,y\right\rangle )\left\{ \sin \theta _{x}\sin (\varphi _{x}-\varphi
_{y})\right\} \left\{ \sin \theta _{x}\sin \theta _{y}+\cos \theta _{x}\cos
\theta _{y}\cos (\varphi _{x}-\varphi _{x})\right\} \right\} \right\vert
_{x=y} \\
& =\left. \left\{ P_{\ell }^{\prime \prime \prime }(\left\langle
x,y\right\rangle )\left\{ -\sin \theta _{y}\sin (\varphi _{x}-\varphi
_{y})\right\} \left\{ \sin \theta _{x}\sin (\varphi _{x}-\varphi
_{y})\right\} \left\{ \sin \theta _{x}\sin \theta _{y}+\cos \theta _{x}\cos
\theta _{y}\cos (\varphi _{x}-\varphi _{x})\right\} \right\} \right\vert
_{x=y} \\
& +\left. P_{\ell }^{\prime \prime }(\left\langle x,y\right\rangle )\left\{
\cos (\varphi _{x}-\varphi _{y})\right\} \left\{ \sin \theta _{x}\sin \theta
_{y}+\cos \theta _{x}\cos \theta _{y}\cos (\varphi _{x}-\varphi
_{x})\right\} \right\vert _{x=y} \\
& +\left. \left\{ P_{\ell }^{\prime \prime }(\left\langle x,y\right\rangle
)\left\{ \sin (\varphi _{x}-\varphi _{y})\right\} \left\{ -\cos \theta
_{x}\cos \theta _{y}\sin (\varphi _{x}-\varphi _{x})\right\} \right\}
\right\vert _{x=y}=P_{\ell }^{\prime \prime }(1),
\end{align*}

\begin{align*}
D& =\left. \partial _{2;x}\left\{ P_{\ell }^{\prime \prime }(\left\langle
x,y\right\rangle )\left\{ \frac{\sin \theta _{x}}{\sin \theta _{y}}\cos
\theta _{y}\sin (\varphi _{x}-\varphi _{y})\right\} \left\{ -\sin \theta
_{x}\cos \theta _{y}+\cos \theta _{x}\sin \theta _{y}\cos (\varphi
_{x}-\varphi _{x})\right\} \right\} \right\vert _{x=y} \\
& =\left. \left\{ P_{\ell }^{\prime \prime \prime }(\left\langle
x,y\right\rangle )\left\{ -\sin \theta _{y}\sin (\varphi _{x}-\varphi
_{x})\right\} \left\{ \frac{\sin \theta _{x}}{\sin \theta _{y}}\cos \theta
_{y}\sin (\varphi _{x}-\varphi _{y})\right\} \left\{ -\sin \theta _{x}\cos
\theta _{y}+\cos \theta _{x}\sin \theta _{y}\cos (\varphi _{x}-\varphi
_{x})\right\} \right\} \right\vert _{x=y} \\
& +\left. \left\{ P_{\ell }^{\prime \prime }(\left\langle x,y\right\rangle
)\left\{ \frac{1}{\sin \theta _{y}}\cos \theta _{y}\cos (\varphi
_{x}-\varphi _{y})\right\} \left\{ -\sin \theta _{x}\cos \theta _{y}+\cos
\theta _{x}\sin \theta _{y}\cos (\varphi _{x}-\varphi _{x})\right\} \right\}
\right\vert _{x=y} \\
& +\left. \left\{ P_{\ell }^{\prime \prime }(\left\langle x,y\right\rangle
)\left\{ \frac{1}{\sin \theta _{y}}\cos \theta _{y}\sin (\varphi
_{x}-\varphi _{y})\right\} \frac{1}{\sin \theta _{x}}\left\{ -\sin \theta
_{x}\cos \theta _{y}+\cos \theta _{x}\sin \theta _{y}\sin (\varphi
_{x}-\varphi _{x})\right\} \right\} \right\vert _{x=y}=0,
\end{align*}%
and finally
\begin{align*}
E& =\left. \partial _{2;x}\left\{ P_{\ell }^{\prime }(\left\langle
x,y\right\rangle )\left\{ \frac{\cos \theta _{x}}{\sin \theta _{y}}\cos
\theta _{y}\sin (\varphi _{x}-\varphi _{y})\right\} \right\} \right\vert
_{x=y} \\
& =\left. \left\{ P_{\ell }^{\prime \prime }(\left\langle x,y\right\rangle
)\left\{ -\sin \theta _{y}\sin (\varphi _{x}-\varphi _{y})\right\} \left\{
\frac{\cos \theta _{x}}{\sin \theta _{y}}\cos \theta _{y}\sin (\varphi
_{x}-\varphi _{y})\right\} \right\} \right\vert _{x=y} \\
& \;+\left. \left\{ P_{\ell }^{\prime }(\left\langle x,y\right\rangle
)\left\{ \frac{\cos \theta _{x}}{\sin \theta _{y}}\frac{\cos \theta _{y}}{%
\sin \theta _{x}}\cos (\varphi _{x}-\varphi _{y})\right\} \right\}
\right\vert _{x=y}=P_{\ell }^{\prime }(1)\frac{\cos ^{2}\theta _{x}}{\sin
^{2}\theta _{x}}.
\end{align*}
\end{proof}

\section{Appendix C: The Conditional Expectation}

\begin{lemma}
\label{expectedvalue} For all $(x,y)\in \mathbb{S}^{2}\times \mathbb{S}^{2},$
$x\neq y,$ we have that
\begin{equation*}
\mu _{\ell }(x,y)=O_{\ell }(d_{\mathbb{S}^{2}}(x,y)).
\end{equation*}
\end{lemma}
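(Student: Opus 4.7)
The idea is to exploit the block structure of the covariance matrices together with a swap symmetry between $x$ and $y$, which shows that the vector $\vec u := (u_1, u_2, u_1, u_2)^T$ avoids the singular direction of $A_\ell^{-1}$ as $y \to x$. Since $|\mu_\ell(x,y)|$ is invariant under global rotations of $\mathbb{S}^2$ (up to bounded linear changes in the tangent bases at $x$ and $y$), I may reduce by rotation to the case where $x$ and $y$ lie on the equator with $\theta_x = \theta_y = \pi/2$. In this configuration, the explicit computations carried out in the proof of Proposition~\ref{Thecondition} show that $Q := a_\ell(x,y)$ is diagonal, so in particular $Q = Q^T$. Combined with the fact that $a_\ell(x,x) = P_\ell^\prime(1)\,I_2 =: p\,I_2$ and that the gradient and Hessian are independent at a common point (giving $b_\ell(x,x) = 0$), this produces the block decompositions
\begin{equation*}
A_\ell(x,y) = \begin{pmatrix} p\,I_2 & Q \\ Q & p\,I_2 \end{pmatrix}, \qquad B_\ell(x,y) = \begin{pmatrix} 0 & b_\ell(x,y) \\ b_\ell(y,x) & 0 \end{pmatrix}.
\end{equation*}

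Next I would exploit the key swap symmetry. Let $P_{\mathrm{sw}}$ denote the $4 \times 4$ permutation matrix exchanging the first two and last two coordinates. Since $Q = Q^T$, direct computation gives $P_{\mathrm{sw}}\,A_\ell\,P_{\mathrm{sw}} = A_\ell$, and obviously $P_{\mathrm{sw}}\vec u = \vec u$. Consequently $\vec z := A_\ell^{-1}(x,y)\,\vec u$ satisfies $P_{\mathrm{sw}}\vec z = \vec z$, so $\vec z = (w, w)^T$ for some $w \in \mathbb{R}^2$. Reading off the top block of $A_\ell\,\vec z = \vec u$ reduces the $4 \times 4$ system to the $2 \times 2$ equation
\begin{equation*}
(p\,I_2 + Q)\,w = (u_1, u_2)^T,
\end{equation*}
whose solution $w = (p\,I_2 + Q)^{-1}(u_1, u_2)^T$ is uniformly bounded as $d_{\mathbb{S}^2}(x,y) \to 0$, because $Q \to p\,I_2$ and hence $p\,I_2 + Q \to 2p\,I_2$ remains non-singular. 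Thus $\|\vec z\| = O_\ell(1)$ near the diagonal, even though $A_\ell^{-1}$ itself blows up.

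To conclude, I would bound the remaining factor $B_\ell^T(x,y)$. Since $b_\ell(x,x) = 0$ and the covariance kernel is smooth (polynomial in $\langle x,y\rangle$ and trigonometric in the angular coordinates), Taylor's theorem yields $\|b_\ell(x,y)\| = O_\ell(d_{\mathbb{S}^2}(x,y))$. Multiplying through,
\begin{equation*}
\mu_\ell(x,y) = B_\ell^T(x,y)\,\vec z = \begin{pmatrix} b_\ell(y,x)^T\,w \\ b_\ell(x,y)^T\,w \end{pmatrix} = O_\ell(d_{\mathbb{S}^2}(x,y))
\end{equation*}
componentwise, and rotational equivariance transfers this to arbitrary coordinate systems. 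The main obstacle is precisely the singularity of $A_\ell^{-1}$ at the diagonal: the limit matrix $A_\ell(x,x)$ has a two-dimensional kernel spanned by antisymmetric vectors of the form $(v, -v)^T$, and the real content of the argument is to recognize that $\vec u = (u,u)^T$ sits exactly in the complementary symmetric subspace. The swap-symmetry observation encapsulates this cleanly and sidesteps the more delicate perturbative analysis of the degenerate inverse.
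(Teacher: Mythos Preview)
Your argument is correct and takes a genuinely different route from the paper's. The paper proceeds by brute force: it writes out $A_\ell^{-1}(x,y)$ explicitly on the equator (where $A_\ell$ decouples into two $2\times 2$ blocks with entries $\lambda_\ell/2$ and $\alpha_{i,\ell}$), multiplies $B_\ell^T A_\ell^{-1}$ against $\vec u$, and then observes the algebraic cancellation
\[
\frac{\lambda_\ell - 2\alpha_{i,\ell}}{\lambda_\ell^2 - 4\alpha_{i,\ell}^2}=\frac{1}{\lambda_\ell + 2\alpha_{i,\ell}},
\]
which removes the factor that vanishes at the diagonal. It finishes by citing explicit formulas from \cite{CMW} showing that the entries $\beta_{i,\ell}(\phi)$ of $b_\ell(x,y)$ are $O_\ell(\sin\phi)$.

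Your swap-symmetry observation explains structurally \emph{why} that cancellation occurs: the kernel of the degenerate limit $A_\ell(x,x)$ is the antisymmetric subspace $\{(v,-v)\}$, while $\vec u=(u,u)$ lies in the complementary symmetric subspace, so $A_\ell^{-1}\vec u$ never sees the blow-up. Reducing to the $2\times 2$ system $(pI_2+Q)w=(u_1,u_2)^T$ reproduces exactly the denominators $\lambda_\ell+2\alpha_{i,\ell}$ that the paper reaches after cancellation. You then replace the explicit $\beta_{i,\ell}$ formulas by the one-line smoothness argument $b_\ell(x,x)=0\Rightarrow b_\ell(x,y)=O_\ell(d_{\mathbb{S}^2}(x,y))$. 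The paper's approach yields the components of $\mu_\ell$ explicitly; yours is shorter, more conceptual, and makes the mechanism transparent.
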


\begin{proof}
We need to compute the $6\times 4$ matrix
\begin{equation*}
B_{\ell }^{T}(x,y)A_{\ell }^{-1}(x,y)
\end{equation*}%
where%
\begin{equation*}
A_{\ell }(x,y)=\left(
\begin{array}{cccc}
\frac{\lambda _{\ell }}{2} & 0 & \alpha _{1,\ell }(x,y) & 0 \\
0 & \frac{\lambda _{\ell }}{2} & 0 & \alpha _{2,\ell }(x,y) \\
\alpha _{1,\ell }(x,y) & 0 & \frac{\lambda _{\ell }}{2} & 0 \\
0 & \alpha _{2,\ell }(x,y) & 0 & \frac{\lambda _{\ell }}{2}%
\end{array}%
\right);
\end{equation*}%
Let us start by computing
\begin{equation*}
A_{\ell }^{-1}(x,y)=\left(
\begin{array}{cccc}
2\frac{\lambda _{\ell }}{\lambda _{\ell }^{2}-4\alpha _{1,\ell }^{2}(x,y)} &
0 & -\frac{4}{\lambda _{\ell }^{2}-4\alpha _{1,\ell }^{2}(x,y)}\alpha
_{1,\ell }(x,y) & 0 \\
0 & 2\frac{\lambda _{\ell }}{\lambda _{\ell }^{2}-4\alpha _{2,\ell }^{2}(x,y)%
} & 0 & -\frac{4}{\lambda _{\ell }^{2}-4\alpha _{2,\ell }^{2}}\alpha
_{2,\ell }(x,y) \\
-\frac{4}{\lambda _{\ell }^{2}-4\alpha _{1,\ell }^{2}(x,y)}\alpha _{1,\ell
}(x,y) & 0 & 2\frac{\lambda _{\ell }}{\lambda _{\ell }^{2}-4\alpha _{1,\ell
}^{2}(x,y)} & 0 \\
0 & -\frac{4}{\lambda _{\ell }^{2}-4\alpha _{2,\ell }^{2}(x,y)}\alpha
_{2,\ell }(x,y) & 0 & 2\frac{\lambda _{\ell }}{\lambda _{\ell }^{2}-4\alpha
_{2,\ell }^{2}(x,y)}%
\end{array}%
\right).
\end{equation*}%
Hence, omitting for brevity the arguments $(x,y)$%
\begin{equation*}
B_{\ell }^{T}A_{\ell }^{-1}\left(
\begin{array}{c}
u_{1} \\
u_{2} \\
u_{1} \\
u_{2}%
\end{array}%
\right)
\end{equation*}%
\begin{equation*}
=\left(
\begin{array}{cccc}
0 & 0 & 0 & \beta _{2,\ell} \\
0 & 0 & \beta _{1,\ell} & 0 \\
0 & 0 & 0 & \beta _{3,\ell} \\
-\beta _{2,\ell} & 0 & 0 & 0 \\
0 & \beta _{1,\ell} & 0 & 0 \\
-\beta _{3,\ell} & 0 & 0 & 0%
\end{array}%
\right) \left(
\begin{array}{cccc}
2\frac{\lambda _{\ell }}{\lambda _{\ell }^{2}-4\alpha _{1,\ell }^{2}} & 0 & -%
\frac{4}{\lambda _{\ell }^{2}-4\alpha _{1,\ell }^{2}}\alpha _{1,\ell } & 0
\\
0 & 2\frac{\lambda _{\ell }}{\lambda _{\ell }^{2}-4\alpha _{2,\ell }^{2}} & 0
& -\frac{4}{\lambda _{\ell }^{2}-4\alpha _{2,\ell }^{2}}\alpha _{2,\ell } \\
-\frac{4}{\lambda _{\ell }^{2}-4\alpha _{1,\ell }^{2}}\alpha _{1,\ell } & 0
& 2\frac{\lambda _{\ell }}{\lambda _{\ell }^{2}-4\alpha _{1,\ell }^{2}} & 0
\\
0 & -\frac{4}{\lambda _{\ell }^{2}-4\alpha _{2,\ell }^{2}}\alpha _{2,\ell }
& 0 & 2\frac{\lambda _{\ell }}{\lambda _{\ell }^{2}-4\alpha _{2,\ell }^{2}}%
\end{array}%
\right) \left(
\begin{array}{c}
u_{1} \\
u_{2} \\
u_{1} \\
u_{2}%
\end{array}%
\right)
\end{equation*}%
\begin{equation*}
=\left(
\begin{array}{cccc}
0 & -4\frac{\beta _{2,\ell}}{\lambda _{\ell }^{2}-4\alpha _{2,\ell }^{2}}%
\alpha _{2,\ell } & 0 & 2\beta _{2,\ell}\frac{\lambda _{\ell }}{\lambda
_{\ell }^{2}-4\alpha _{2,\ell }^{2}} \\
-4\frac{\beta _{1,\ell}}{\lambda _{\ell }^{2}-4\alpha _{1,\ell }^{2}}\alpha
_{1,\ell } & 0 & 2\beta _{1,\ell}\frac{\lambda _{\ell }}{\lambda _{\ell
}^{2}-4\alpha _{1,\ell }^{2}} & 0 \\
0 & -4\frac{\beta _{3,\ell}}{\lambda _{\ell }^{2}-4\alpha _{2,\ell }^{2}}%
\alpha _{2,\ell } & 0 & 2\beta _{3,\ell}\frac{\lambda _{\ell }}{\lambda
_{\ell }^{2}-4\alpha _{2,\ell }^{2}} \\
-2\beta _{2,\ell}\frac{\lambda _{\ell }}{\lambda _{\ell }^{2}-4\alpha
_{1,\ell }^{2}} & 0 & 4\frac{\beta _{2,\ell}}{\lambda _{\ell }^{2}-4\alpha
_{1,\ell }^{2}}\alpha _{1,\ell } & 0 \\
0 & 2\beta _{1,\ell}\frac{\lambda _{\ell }}{\lambda _{\ell }^{2}-4\alpha
_{2,\ell }^{2}} & 0 & -4\frac{\beta _{1,\ell}}{\lambda _{\ell }^{2}-4\alpha
_{2,\ell }^{2}}\alpha _{2,\ell } \\
-2\beta _{3,\ell}\frac{\lambda _{\ell }}{\lambda _{\ell }^{2}-4\alpha
_{1,\ell }^{2}} & 0 & 4\frac{\beta _{3,\ell}}{\lambda _{\ell }^{2}-4\alpha
_{1,\ell }^{2}}\alpha _{1,\ell } & 0%
\end{array}%
\right) \allowbreak \left(
\begin{array}{c}
u_{1} \\
u_{2} \\
u_{1} \\
u_{2}%
\end{array}%
\right)
\end{equation*}%
\begin{equation*}
=\left(
\begin{array}{c}
\frac{2\beta _{2,\ell}}{\lambda _{\ell }^{2}-4\alpha _{2,\ell }^{2}}(\lambda
_{\ell }u_{2}-2\alpha _{2,\ell }u_{2}) \\
\frac{2\beta _{1,\ell}}{\lambda _{\ell }^{2}-4\alpha _{1,\ell }^{2}}(\lambda
_{\ell }u_{1}-2\alpha _{2,\ell }u_{1}) \\
\frac{2\beta _{3,\ell}}{\lambda _{\ell }^{2}-4\alpha _{2,\ell }^{2}}(\lambda
_{\ell }u_{2}-2\alpha _{2,\ell }u_{2}) \\
-\frac{2\beta _{2,\ell}}{\lambda _{\ell }^{2}-4\alpha _{1,\ell }^{2}}%
(\lambda _{\ell }u_{1}-2\alpha _{2,\ell }u_{1}) \\
\frac{2\beta _{1,\ell}}{\lambda _{\ell }^{2}-4\alpha _{2,\ell }^{2}}(\lambda
_{\ell }u_{2}-2\alpha _{2,\ell }u_{2}) \\
-\frac{2\beta _{3,\ell}}{\lambda _{\ell }^{2}-4\alpha _{1,\ell }^{2}}%
(\lambda _{\ell }u_{1}-2\alpha _{2,\ell }u_{1})%
\end{array}%
\right) \allowbreak =\left(
\begin{array}{c}
\frac{2\beta _{2,\ell}u_{2}}{\lambda _{\ell }+2\alpha _{2,\ell }} \\
\frac{2\beta _{1,\ell}u_{1}}{\lambda _{\ell }+2\alpha _{1,\ell }} \\
\frac{2\beta _{3,\ell}u_{2}}{\lambda _{\ell }+2\alpha _{2,\ell }} \\
-\frac{2\beta _{2,\ell}u_{1}}{\lambda _{\ell }+2\alpha _{1,\ell }} \\
\frac{2\beta _{1,\ell}u_{2}}{\lambda _{\ell }+2\alpha _{2,\ell }} \\
-\frac{2\beta _{3,\ell}u_{1}}{\lambda _{\ell }+2\alpha _{1,\ell }}%
\end{array}%
\right) \allowbreak =O_{\ell }(\varepsilon \sin \phi ).
\end{equation*}%
Let us now focus without loss of generality on ``equator" points $x=(\frac{%
\pi }{2},\varphi _{x}),$ $y=(\frac{\pi }{2},\varphi _{y}),$ and write $\phi
:=d_{\mathbb{S}^{2}}(x,y)=|\varphi _{x}-\varphi _{y}|.$ It has been shown in
\cite{CMW} that the following expressions hold,%
\begin{equation*}
\beta _{1,\ell }(\phi )=\sin \phi P_{\ell }^{\prime \prime }(\cos \phi ),
\end{equation*}%
\begin{equation*}
\beta _{2,\ell }(\phi )=\sin \phi \cos \phi P_{\ell }^{\prime \prime }(\cos
\phi )+\sin \phi P_{\ell }^{\prime }(\cos \phi ),
\end{equation*}%
\begin{equation*}
\beta _{3,\ell }(\phi )=-\sin ^{3}\phi P_{\ell }^{\prime \prime \prime
}(\cos \phi )+3\sin \phi \cos \phi P_{\ell }^{\prime \prime }(\cos \phi
)+\sin \phi P_{\ell }^{\prime }(\cos \phi ),\newline
\end{equation*}%
whence%
\begin{equation*}
\beta _{1,\ell }(\phi ),\beta _{2,\ell }(\phi ),\beta _{3,\ell }(\phi
)=O_{\ell }(\sin \phi )=O_{\ell }(\phi ), \text{ as }\phi \rightarrow 0.
\end{equation*}%
Recall also that%
\begin{equation*}
A_{\ell }(x,y)=
\end{equation*}%
\begin{equation*}
\left(
\begin{array}{cccc}
P_{\ell }^{\prime }(1) & \ast & \ast & \ast \\
0 & P_{\ell }^{\prime }(1) & \ast & \ast \\
P_{\ell }^{\prime }(\left\langle x,y\right\rangle ) & 0 & P_{\ell }^{\prime
}(1) & \ast \\
0 & -P_{\ell }^{\prime \prime }(\left\langle x,y\right\rangle )\sin
^{2}(\varphi _{x}-\varphi _{y})+P_{\ell }^{\prime }(\left\langle
x,y\right\rangle )\cos (\varphi _{x}-\varphi _{y}) & 0 & P_{\ell }^{\prime
}(1)%
\end{array}%
\right)
\end{equation*}%
whence%
\begin{eqnarray*}
\alpha _{1,\ell }(x,y) &=&P_{\ell }^{\prime }(\left\langle x,y\right\rangle )%
\text{ ,} \\
\alpha _{2,\ell }(x,y) &=&-P_{\ell }^{\prime \prime }(\left\langle
x,y\right\rangle )\sin ^{2}(\varphi _{x}-\varphi _{y})+P_{\ell }^{\prime
}(\left\langle x,y\right\rangle )\cos (\varphi _{x}-\varphi _{y})\text{ , }
\end{eqnarray*}%
and thus on the equator $\varphi _{x}=\varphi _{y}$%
\begin{equation*}
\alpha _{1,\ell }(x,y)=P_{\ell }^{\prime }(\left\langle x,y\right\rangle
)=\alpha _{2,\ell }(x,y)\text{ . }
\end{equation*}%
It hence follows that%
\begin{eqnarray*}
\left\vert \mu _{i,\ell }(x,y)\right\vert &\leq &const_{\ell }\times \left\{
\left\vert u_{1}\right\vert +\left\vert u_{2}\right\vert \right\} \sin \phi
\\
\text{ } &\leq &const_{\ell }\times \left\{ \left\vert u_{1}\right\vert
+\left\vert u_{2}\right\vert \right\} \times d_{\mathbb{S}^{2}}(x,y)\text{,}
\end{eqnarray*}%
where the constant can depend on $\left\{ \lambda _{\ell }+2\alpha _{1,\ell
}\right\} ^{-1},$ which is bounded because it is the inverse of a
non-vanishing polynomial in a compact space. Hence the result is established.
\end{proof}

\textbf{Acknowledgements} We are grateful to Maurizia Rossi and Igor Wigman
for many insightful discussions. DM acknowledges the MIUR Excellence
Department Project awarded to the Department of Mathematics, University of
Rome Tor Vergata, CUP E83C18000100006.

\bigskip

Valentina Cammarota

Dipartimento di Scienze Statistiche

Sapienza Universit\`{a} di Roma

Piazzale Aldo Moro, 5

00185 Roma

email: valentina.cammarota@uniroma1.it

\bigskip

Domenico Marinucci

Dipartimento di Matematica

Universit\`{a} di Roma Tor Vergata

Via della Ricerca Scientifica, 1

00133 Roma

email: marinucc@mat.uniroma2.it

\bigskip

\end{document}